\theoremstyle{plain}
\newtheorem{theorem}{Theorem}[section]
\newtheorem{corollary}[theorem]{Corollary}
\newtheorem{lemma}[theorem]{Lemma}
\newtheorem{definition}[theorem]{Definition}
\theoremstyle{definition}
\newtheorem{remark}[theorem]{Remark}
\newtheorem{claim}[theorem]{Claim}
\newcommand{\E}{\mathbb E}
\newcommand{\N}{\mathbb N}
\newcommand{\R}{\mathbb R}
\newcommand{\C}{\mathbb C}
\newcommand{\bP}{\mathbb P}
\newcommand{\eps}{\varepsilon}
\newcommand{\ep}{\varepsilon}
\renewcommand{\P}{\mathbb P}
\newcommand{\Var}{\textup{Var}}
\newcommand{\wps}{\widetilde P_n^*}
\newcommand{\ps}{P_n^*}
\newcommand{\txi}{\tilde \xi}
\def\dis{\displaystyle}
\newcommand\Bk{{\mathbf k}}
\newcommand\Bx{{\mathbf x}}
\newcommand\By{{\mathbf y}}
\newcommand\Bet{\boldsymbol{\eta}}
\DeclareMathOperator{\diag}{diag}
\newcommand\CA{{\mathcal A}}
\newcommand\CE{{\mathcal E}}
\numberwithin{equation}{section}
\begin{document}
	
\title{Real roots of random orthogonal polynomials with exponential weights}
\author{Yen Do, Doron Lubinsky, Hoi H. Nguyen, Oanh Nguyen, and Igor Pritsker}
\address{Department of Mathematics\\ The University of Virginia\\ 141 Cabell Drive, Charlottesville, VA 22904, USA}
\email{yendo@virginia.edu}
 
\address{School of Mathematics, Georgia Institute of Technology, Atlanta, GA 30332, USA}
\email{lubinsky@math.gatech.edu}

\address{Department of Mathematics\\ The Ohio State University \\ 231 W 18th Ave \\ Columbus, OH 43210 USA}
\email{nguyen.1261@osu.edu}

\address{Division of Applied Mathematics\\ Brown University\\  Providence, RI 02906, USA}
\email{oanh\_nguyen1@brown.edu}

\address{Department of Mathematics, Oklahoma State University, Stillwater, OK 74078, USA}
\email{igor@math.okstate.edu}

\begin{abstract}
 We consider random orthonormal polynomials
	$$
	P_{n}(x)=\sum_{i=0}^{n}\xi_{i}p_{i}(x),
	$$
	where $\xi_{0}$, . . . , $\xi_{n}$ are independent random variables with zero mean, unit variance and uniformly bounded $(2+\ep_0)$-moments, and $\{p_n\}_{n=0}^{\infty}$ is the system of orthonormal polynomials with respect to a general exponential weight $W$ on the real line. This class of orthogonal polynomials includes the popular Hermite and Freud polynomials. We establish universality for the leading asymptotics of the expected number of real roots of $P_n$, both globally and locally. In addition, we find an almost sure limit of the measures counting all roots of $P_n.$ This is accomplished by introducing new ideas on applications of the inverse Littlewood-Offord theory in the context of the classical three term recurrence relation for orthogonal polynomials to establish anti-concentration properties, and by adapting the universality methods to the weighted random orthogonal polynomials of the form $W P_n.$ 
\end{abstract}
\maketitle

\section{Introduction and main results}
The study of roots of random polynomials has a long history which can traced back to the fundamental papers by Bloch and Polya in 1932 \cite{BP1}, Littlewood-Offord \cite{LO1,LO2,LO3}, and Kac \cite{Kac1943average}  in the 1940s, Erd\H{o}s--Offord \cite{EO} in 1956, the list goes on. Given a basis of deterministic functions $p_0, p_1, \dots$, consider random polynomials of the form
$$P_n(x)= \xi_0 p_0(x)+ \xi_1 p_1(x) +.... +\xi_n p_n(x)$$
where $\xi_i$ are independent random variables. Three popular classes of random polynomials include
	\begin{itemize}
		\item Random algebraic polynomials, which correspond to $p_n = a_n x^{n}$,
		\item Random trigonometric polynomials, which correspond to $p_n = a_n \sin (nx)$ and $p_n = a_n \cos (nx)$,
		\item Random orthogonal polynomials, which correspond to an orthonormal basis of polynomials $p_n$ associated with a given measure.
	\end{itemize}
A very incomplete list of references for the first two classes include \cite{angstpoly, azais2015local, bally2017non, BP1, DHV, EO, flasche, HKPV, iksanov2016local,  kabluchko2014asymptotic, Kac1943average, LO1,LO2,LO3, nguyenvurandomfunction17, pritsker1, sodin2005zeroes, TVpoly}.
 	In this paper, we focus on random orthogonal polynomials.  Let $\mu$ be a finite positive measure on the {\it real line} with finite moments of all orders, and with the associated orthonormal system of polynomials $\{p_n\}_{n=0}^{\infty}$. In other words, each $p_n$ has degree $n$, real coefficients, and a positive leading coefficient, and furthermore
	$$\int_{\R} p_n(x)p_m(x)d\mu(x) = \begin{cases} 1, & m=n;\\ 0, & m\ne n;\end{cases}$$
	see, e.g., Freud \cite{freudorthogonal} for details.
	Assume furthermore that $\mu$ is absolutely continuous with respect to the Lebesgue measure, that is $d\mu(x)=w(x)dx$, where $w$ is referred to as the weight function. Some popular systems of orthogonal polynomials are
	\begin{enumerate}
		\item [(E1)] Legendre polynomials: The weight $w(x)$ is the indicator function of $[-1,1]$.\\
		\item [(E2)]Chebysev polynomials of type 1:  $w(x)=\frac 1{\sqrt{1-x^2}}$ for $x\in (-1,1)$, and zero elsewhere.\\
		\item [(E3)]Chebysev polynomials of type 2:  $w(x)=\sqrt{1-x^2}$ for $x\in (-1,1)$, and zero elsewhere.\\
		\item [(E4)]Jacobi polynomials: These are generalizations of Chebyshev and Legendre polynomials, with $w(x)=(1-x)^\alpha (1+x)^\beta$ for $x\in (-1,1)$ and zero elsewhere, where $\alpha,\beta>-1$.\\
		\item [(E5)]Hermite polynomials:  $w(x)=e^{-x^2}$ for $x\in \R$.\\
		\item [(E6)] Freud polynomials: These are generalizations of  Hermite polynomials, with $w(x)=e^{-c|x|^{\lambda}}$ for $x\in \R$, where $c>0,\lambda>0$.\\
	\end{enumerate}

One of the most basic questions in the study of roots is the average number of real roots of $P_n$. In \cite{Kac1943average}, Kac derived the celebrated Kac-Rice formula that allows us to reduce this task to evaluating an integral, which is often most manageable when the random variables $\xi_i$ are standard Gaussian.
Let $N_n(S)$ be the number of roots of $P_n$ in a set $S$. In this Gaussian setting, the average number of real roots of $P_n$ has been known for the following models.
	\begin{itemize}
		\item Random Legendre polynomials: Das \cite{das1971real} found that $\E[N_n(-1,1)]$ is asymptotically equal to $n/\sqrt{3}$. Wilkins \cite{wilkins1997expected} improved the error term in this asymptotic relation by showing that $\E[N_n(-1,1)] = n/\sqrt{3} + o(n^\ep)$ for any $\ep>0$;
		\item Random Jacobi polynomials: Das and Bhatt \cite{das1982real} concluded that $\E[N_n(-1,1)]$ is also asymptotically equal to $n/\sqrt{3}$;
		\item More generally, when $\mu$ a finite Borel measure with {\it compact support} on the real line. Lubinsky--Pritsker--Xie  \cite{LPX1} showed that under mild conditions on the weight $d\mu/dx$, the same asymptotic holds.
		\item Beyond compactly supported measures, Pritsker--Xie \cite{PX} extended this asymptotic to the random Freud polynomials. Later, Lubinsky--Pritsker--Xie \cite{LPX2} showed that the same asymptotic holds for general exponential weights (see Theorem \ref{thm:gau:global} below).
	\end{itemize}

	 Little is known about the average number of roots when the coefficients are not Gaussian. In \cite{DOVorthonormal}, Do--Nguyen--Vu extended the asymptotics for the model in \cite{LPX1} to general random variables with mean 0, variance 1, and bounded $(2+\ep_0)$-moments for some $\ep_0>0$. In other words, under some conditions, the asymptotic of $\E N_n(\R)$ is known when the measures $\mu$ are compactly supported, such as the measures in Examples (E1)--(E4) above.
	
	 This paper is devoted to random orthogonal polynomials associated with exponential weights on the whole real line, such as random polynomials spanned by Hermite and Freud polynomials mentioned in (E5) and (E6). In particular, we show that the results of \cite{LPX2} hold for general random coefficients $\xi_i$. We shall assume throughout the paper that the measure $\mu$ is absolutely continuous, with $d\mu(x)=W^{2}(x)dx$, where the weight $W$ belongs to the class $\mathcal{F}(C^2)$ of exponential weights considered in \cite{Lubinskybookexp} and \cite{LPX2}.

\begin{definition} [Weight class $\mathcal{F}(C^2)$]\label{def1.1}
Let $W=e^{-Q}$, where $Q: \R \rightarrow [0, \infty)$ satisfies the following conditions:\\
(a) $Q'$ is continuous in $\R$ and $Q(0)=0$.\\
(b) $Q'$ is non-decreasing in $\R$, and $Q''$ exists in $\R\setminus\{0\}$.\\
(c) \[\displaystyle \lim_{|t|\rightarrow \infty} Q(t)=\infty.\]
(d) The function
\[
T(t)=\frac{tQ'(t)}{Q(t)},\ t \neq 0,
\]
is quasi-increasing in $(0, \infty)$, in the sense that for some $C_1>0$,
\[
0<x<y \Rightarrow T(x) \leq C_1 T(y).
\]
We assume an analogous restriction for $y<x<0$. In addition, we assume that for some $\Lambda >1$,
\[
T(t)\geq \Lambda \text{ in } \R\setminus\{0\}.
\]
(e) There exists $C_2>0$ such that
\[
\displaystyle \frac{Q''(x)}{|Q'(x)|}\leq C_2\frac{|Q'(x)|}{Q(x)}, \ x\in \R\setminus\{0\}.
\]
Then we write $W \in \mathcal{F} (C^2)$.
\end{definition}

It is straightforward to check that the Freud weights $w(x)=e^{-c|x|^{\lambda}}$ of (E6) belong to $\mathcal{F} (C^2)$ when $\lambda>1.$
From the definition, $\mathcal{F} (C^2)$ consists of smooth functions $e^{-Q}$, where $Q$ is convex, and has regular growth behavior at infinity.
We note that $\mathcal{F} (C^2)$ is one of the largest known classes of weights that allow for a very complete theory of the associated orthogonal polynomials,
see \cite{Lubinskybookexp} for a comprehensive exposition.

We recall the result of \cite{LPX2} on the asymptotic for $\E N_n(\R)$ where $N_n(\R)$ is the number of real roots of the random polynomial
$$
P_{n}(x)=\sum_{i=0}^{n}\xi_{i}p_{i}(x).
$$

\begin{theorem} [Gaussian setting, cf. Theorem 2.2 of \cite{LPX2}] \label{thm:gau:global}
Let $W=e^{-Q}\in \mathcal{F} (C^2)$, where $Q$ is even. If the function $T$ in the definition of $\mathcal{F} (C^2)$ satisfies
\begin{equation}\label{1.3}
\lim_{x\rightarrow \infty}T(x) = \alpha,
\end{equation}
where $\alpha\in(1, \infty]$, then the expected number of real zeros of $P_n$, with $\{\xi_i\}_{i=0}^\infty$ being i.i.d. standard Gaussian, satisfies
\begin{align} \label{1.4}
\lim_{n\to\infty} \frac{1}{n} \E[N_n(\R)]= \frac{1}{\sqrt{3}}.
\end{align}
\end{theorem}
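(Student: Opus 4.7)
My plan is to use the Kac--Rice formula, which is exact in the Gaussian setting. Let $K_n(x,y) = \sum_{i=0}^n p_i(x)p_i(y)$ denote the Christoffel--Darboux kernel. Since $P_n$ is a centered Gaussian process on $\R$ with covariance $K_n$, one has
\begin{equation*}
\E[N_n(I)] = \frac{1}{\pi}\int_{I}\sqrt{\partial_{xy}^2 \log K_n(x,y)\big|_{y=x}}\,dx
\end{equation*}
for any Borel $I\subset\R$. The strategy is to control this integrand through the \emph{weighted} kernel $K_n^*(x,y) := W(x)W(y)K_n(x,y)$: writing $W = e^{-Q}$ gives $\log K_n(x,y) = \log K_n^*(x,y) + Q(x) + Q(y)$, so $\partial_{xy}^2 \log K_n = \partial_{xy}^2 \log K_n^*$, and the Kac--Rice intensity depends only on the better-behaved $K_n^*$.

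Next I would rescale by the Mhaskar--Rakhmanov--Saff number $a_n$ via $x=a_n s$ and decompose $\R$ into a bulk $|s|\le 1-\delta$, an edge region $1-\delta < |s|<1+\delta_n$, and an exterior $|s|\ge 1+\delta_n$. In the bulk, the local sine-kernel universality for $K_n^*$ in the class $\mathcal{F}(C^2)$ from \cite{Lubinskybookexp} yields
\begin{equation*}
\frac{1}{A_n(x)} K_n^*\!\left(x+\frac{u}{A_n(x)},\,x+\frac{v}{A_n(x)}\right) \longrightarrow \frac{\sin(\pi(u-v))}{\pi(u-v)},\qquad A_n(x):=K_n^*(x,x),
\end{equation*}
uniformly on compact sets with $|s|<1$. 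A direct computation from this limit gives $\partial_{xy}^2 \log K_n^*(x,y)|_{y=x} \sim (\pi^2/3)\,A_n(x)^2$, hence $\rho_n(x)\sim A_n(x)/\sqrt{3}$. Since $\int_\R A_n(x)\,dx = \int_\R W(x)^2 K_n(x,x)\,dx = n+1$ by orthonormality, the bulk contributes $(1+o(1))\,n/\sqrt{3}$. For the exterior I would use the super-exponential smallness of $|p_k W|$ off $[-a_k,a_k]$ (the infinite-finite range inequality available for $\mathcal{F}(C^2)$), combined with a covariance/Jensen bound, to conclude that the expected number of zeros there is $o(n)$. The edge contribution is $O(n\delta)$ via standard pointwise upper bounds on $A_n$ near $\pm a_n$, and vanishes on sending $\delta\to 0$ after $n\to\infty$.

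The main obstacle is promoting the bulk pointwise convergence to convergence of integrals. One needs integrable uniform two-sided estimates for the rescaled intensity $\rho_n(a_n s)\cdot a_n /n$ on $|s|\le 1-\delta$, together with a quantitative matching between bulk and edge zones; these are supplied by the two-sided Christoffel-function estimates and the regular growth condition $T(x)\to\alpha>1$ of Definition \ref{def1.1} through the machinery of \cite{Lubinskybookexp}. Dominated convergence then yields \eqref{1.4}.
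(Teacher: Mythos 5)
A point of orientation first: the paper itself does not prove Theorem \ref{thm:gau:global} at all --- it is imported from \cite{LPX2} (Theorem 2.2 there), and the present paper's contribution is to transfer it to non-Gaussian coefficients. So your proposal has to be measured against the proof in \cite{LPX2}, and in outline it follows essentially that route: the Edelman--Kostlan/Kac--Rice intensity $\frac{1}{\pi}\sqrt{\partial^2_{xy}\log K_n(x,y)\big|_{y=x}}$, the passage to the weighted kernel $W(x)W(y)K_n(x,y)$ (your observation that the separable terms $Q(x)+Q(y)$ drop out of the mixed log-derivative is correct and is exactly how the weight is rendered harmless), Lubinsky's bulk sine-kernel universality giving local density $\sim \tilde{K}_n(x,x)/\sqrt{3}$, two-sided Christoffel-function estimates, and separate edge/exterior arguments. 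These are the same ingredients the present paper reuses elsewhere for its own purposes: the universality limit \eqref{4.7}, the bounds \eqref{eq:KnW}, and the Kac--Rice computation in Section \ref{sec:proof:local}.

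Two steps in your sketch are thinner than a complete proof requires. First, in the bulk you deduce the count $(1+o(1))\,n/\sqrt{3}$ from the trace identity $\int_\R W^2(x)K_n(x,x)\,dx=n+1$; that identity alone says nothing about the integral over $|x|\le(1-\delta)a_n$. What is actually needed is $K_n(x,x)W^2(x)\sim\sigma_n(x)$ uniformly on the bulk (Theorem 1.25 of \cite{Lubinskybookexp}, which is what underlies \eqref{eq:KnW}) together with the fact that the equilibrium measure gives the bulk mass $n\left(1-o_\delta(1)\right)$; you gesture at this machinery in your closing paragraph, but it, not the trace identity, is the mechanism. Relatedly, extracting $\partial^2_{uv}\log$ of the rescaled kernel from the universality limit requires differentiating that limit, which is legitimate only because \eqref{4.7} holds uniformly for \emph{complex} $u,v$ on compact sets (cf.\ Section \ref{sec:cond4}); the real-variable statement by itself does not give convergence of derivatives. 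Second, the edge and exterior regions are precisely where \cite{LPX2} and \cite{PX} spend most of their effort: sine-kernel universality is not uniform up to $\pm a_n$, the pointwise Kac--Rice bound degrades there, and ``infinite--finite range inequality plus a Jensen-type bound gives $o(n)$ outside'' is a statement of the goal rather than an argument. In short, your roadmap is the right one and coincides with the cited proof, but the two boundary regimes --- the genuinely technical part of \cite{LPX2} --- are asserted, not proved.
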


To state the result on the local asymptotics for the expected number of real zeros, we first define the Ullman distribution that provides the limiting density for the number of real roots.
Following \cite{Lubinskybookexp} and \cite{ST}, we define the Ullman distribution $\mu_{\alpha},\ \alpha>1,$ by
\begin{equation}\label{Ullman}
d \mu_{\alpha}(x)= \left(\frac{\alpha}{\pi}\int_{|x|}^1\frac{t^{\alpha-1}}{\sqrt{t^2-x^2}}\,dt \right) dx,\quad x\in[-1,1],
\end{equation}
and we consider the following contracted version of $P_n$
\begin{align} \label{1.5}
P_n^*(s):=P_n(a_ns),\quad n\in\N,
\end{align}
where $a_n$ is the Mhaskar-Rakhmanov-Saff number associated with the weight $W$, see \cite{Lubinskybookexp} and \cite{ST}.

 Roughly speaking, most of the real roots of $P_n^*$ stay in $[-1, 1]$.  For any set $E\subset\C$, $N_n^*(E)$ denotes the number of zeros of a random polynomial $P_n^*$ located in $E$. We now state the local result on the asymptotic of $\E[N_n^*\left([a,b]\right)]$ for intervals $[a,b]\subset (-1, 1)$. It says that $\mu_\alpha$ is the limiting measure, up to a constant.

\begin{theorem} [Gaussian setting, see Theorem 2.3 in \cite{LPX2}]\label{thm:gau:local}
Assume that $\{\xi_k\}_{k=0}^n$ are i.i.d. standard Gaussian. Let $W=e^{-Q} \in \mathcal{F} (C^2)$, where $Q$ is even. Suppose that the function $T$ in the definition of $\mathcal{F} (C^2)$ satisfies \eqref{1.3}. For any $[a,b] \subset (-1,1)$, we have
\begin{equation} \label{1.6}
\lim_{n\rightarrow\infty} \frac{1}{n}\E\left[N_n^*\left([a,b]\right)\right] = \frac{1}{\sqrt{3}} \mu_{\alpha}([a,b]).
\end{equation}
\end{theorem}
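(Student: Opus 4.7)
The plan is to exploit the Gaussian hypothesis through the Kac-Rice formula and reduce the local counting statistic to well-studied bulk asymptotics of the Christoffel-Darboux kernel for $W\in\CF(C^2)$. Write $K_n^*(s,t)=\sum_{i=0}^n p_i(a_n s)\,p_i(a_n t)$ for the covariance of the centred Gaussian process $P_n^*$. Kac-Rice yields
\[
\E[N_n^*([a,b])]=\int_a^b \rho_n(s)\,ds,\qquad \rho_n(s)=\frac{1}{\pi}\sqrt{\partial_s\partial_t \log K_n^*(s,t)\Big|_{t=s}}.
\]
It therefore suffices to establish $\rho_n(s)/n \to \mu_\alpha'(s)/\sqrt{3}$ pointwise on $(-1,1)$ together with a uniform domination on $[a,b]$, and then invoke dominated convergence.

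Fix $s_0\in(-1,1)$ and invoke the bulk sine-kernel universality for $W\in\CF(C^2)$ from Lubinsky's monograph \cite{Lubinskybookexp}. Setting $\widetilde K_n(x,y):=K_n(x,y)W(x)W(y)$ and $\delta_n:=1/\widetilde K_n(a_n s_0,a_n s_0)$, the bulk Christoffel-function asymptotic gives $\widetilde K_n(a_n s_0,a_n s_0)\sim n\mu_\alpha'(s_0)/a_n$, and
\[
\delta_n\,\widetilde K_n(a_n s_0+\delta_n u,\,a_n s_0+\delta_n v)\longrightarrow \frac{\sin\pi(u-v)}{\pi(u-v)}
\]
locally uniformly in $(u,v)$. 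Because $W>0$, the zeros of $P_n^*$ coincide with those of $W(a_n\cdot)P_n^*$, and the weight prefactors cancel in the logarithmic derivative of Kac-Rice; after the rescaling $x=a_n s_0+\delta_n u$ the log-covariance converges to that of the stationary Gaussian process with covariance $r(u)=\sin(\pi u)/(\pi u)$. Its Kac-Rice density is $\tfrac{1}{\pi}\sqrt{-r''(0)/r(0)}=\tfrac{1}{\pi}\sqrt{\pi^2/3}=1/\sqrt{3}$ per unit $u$, so undoing the rescaling (Jacobian $a_n/\delta_n\sim n\mu_\alpha'(s_0)$) gives $\rho_n(s_0)/n\to \mu_\alpha'(s_0)/\sqrt{3}$. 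Integrating over $[a,b]$ then yields \eqref{1.6}.

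The principal obstacle is that Kac-Rice demands convergence of the first and second mixed derivatives of $\widetilde K_n$ at the diagonal, whereas sine-kernel universality is typically stated as locally uniform convergence of the kernel itself. I would upgrade via Cauchy-type estimates on the analytic extension of $\widetilde K_n$ to a complex polydisc of fixed radius in the rescaled variables, using that universality for $\CF(C^2)$ can be extended to a complex neighbourhood. A secondary technical point is an integrable majorant for $\rho_n/n$ on $[a,b]$: by Cauchy-Schwarz, $\rho_n(s)\le \tfrac{1}{\pi}\sqrt{\partial_s\partial_t K_n^*(s,s)/K_n^*(s,s)}$, and both quantities can be controlled uniformly on $[a,b]$ via Markov-Bernstein inequalities combined with a bulk Christoffel-function lower bound. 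These refinements convert the pointwise limit into the integrated asymptotic.
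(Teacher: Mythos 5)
This theorem is not proved in the paper: it is quoted verbatim from \cite{LPX2}, so there is no internal proof to compare with, and your argument is essentially the one used in that reference — the Edelman--Kostlan/Kac--Rice density for the centred Gaussian process, Lubinsky's bulk universality for the weighted kernel $\widetilde K_n$ combined with the Christoffel-function/equilibrium-density asymptotics $\widetilde K_n(a_ns,a_ns)\sim n\mu_\alpha'(s)/a_n$, the cancellation of the weight (and of the drift factor in the universality limit) in the mixed logarithmic derivative, and the sinc-process value $\tfrac1\pi\sqrt{\pi^2/3}=1/\sqrt3$. The two technical points you flag are resolvable exactly as you propose, and indeed by tools this paper itself uses elsewhere: derivative convergence at the diagonal comes from the complex-variable form of universality and its differentiated consequences (cf.\ \eqref{4.7} and \eqref{7.2}), and the integrable majorant comes from the uniform bulk bounds $K_n^{(1,1)}W^2\ll (n/a_n)^3$ and $K_nW^2\gg n/a_n$ (cf.\ \eqref{eq:KnW}), which give $\rho_n\ll n$ on $[a,b]$ and justify dominated convergence.
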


It is worth noting that condition \eqref{1.3} plays a crucial role in the existence of the limit in \eqref{1.6}. In fact, if the weight function $W$ does not behave at infinity in a regular way,
by for example oscillating between the values of $e^{-c|x|^{\lambda_1}}$ and $e^{-c|x|^{\lambda_2}}$ for $x$ near infinity, with $\lambda_1>\lambda_2>1,$ then the limit in \eqref{1.6} may fail to exist. This indicates that the local asymptotic behavior of $\E\left[N_n^*\left([a,b]\right)\right]$ may not abide a single limiting distribution, and may differ dramatically when the interval $[a,b]$ is shifted.

We are now ready to state our main universality result confirming that Theorems \ref{thm:gau:global} and \ref{thm:gau:local} hold for general random coefficients.  
\begin{theorem} \label{thm:main}
Assume that the coeffcients $\{\xi_k\}_{k=0}^n$ are independent real-valued random variables such that $\E[\xi_k]=0, \Var[\xi_k]=1,$ and $\E[|\xi_k|^{2+\ep_0}] < C$ for some constants $C,\ep_0>0$ and all $k=0,1,\dots$. Let $W=e^{-Q}\in \mathcal{F} (C^2)$, where $Q$ is even. If the function $T$ in the definition of $\mathcal{F} (C^2)$ satisfies \eqref{1.3} with $\alpha\in(1,\infty)$, then we have
\begin{enumerate}
	\item [(a)] The expected number of real zeros of random orthogonal polynomial $P_n$ satisfies \eqref{1.4}.
	\item [(b)] For any $[a,b] \subset (-1,1)$, the real zeros of polynomials $P_n^*$ satisfy \eqref{1.6}.
\end{enumerate}
\end{theorem}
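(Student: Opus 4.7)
The plan is to prove Theorem \ref{thm:main} by universality: since the Gaussian case is Theorems \ref{thm:gau:global} and \ref{thm:gau:local}, it suffices to establish, for the contracted polynomial $P_n^*(s)=P_n(a_n s)$, the comparison
$$
\E[N_n^{*,(\xi)}(E)] = \E[N_n^{*,(g)}(E)] + o(n), \qquad n\to\infty,
$$
where $N_n^{*,(\xi)}$ and $N_n^{*,(g)}$ count zeros of $P_n^*$ with the given and with standard Gaussian coefficients respectively, $E=\R$ for part (a) and $E=[a,b]\subset(-1,1)$ for part (b). I would first decompose $\R$ (in the variable $s$) into a bulk $[-1+\eta,1-\eta]$, an edge $\{1-\eta \le |s| \le 1+\eta\}$, and a tail $\{|s|>1+\eta\}$. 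In both random models the tail contributes $o(n)$ zeros and the edge contributes $O(\eta\, n)$ zeros: the tail bound follows from the rapid decay of $Wp_n$ outside $[-a_n,a_n]$ in Mhaskar--Rakhmanov--Saff theory combined with Jensen's formula applied to the random entire function, and the edge bound uses that $\mu_\alpha$ is absolutely continuous with bounded density, so that even in the Gaussian model the number of zeros near $|s|=1$ is $O(\eta\, n)$.

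The heart of the argument is local universality on the bulk. I would cover a fixed $[a,b]\subset(-1,1)$ by disjoint intervals $I_j$ of length $\sim 1/n$, the natural spacing of zeros, and establish
$$
\E[N_n^{*,(\xi)}(I_j)] = \E[N_n^{*,(g)}(I_j)] + o(1/n),
$$
uniformly in $j$, with errors summable to $o(1)$. Comparison on one short interval proceeds by a Lindeberg-type swap: replace the $\xi_i$ one at a time by independent standard Gaussians and control the resulting change in the expected zero count through Taylor expansion of a smoothed version of $N_n^*(I_j)$, for instance a regularised Kac--Rice integrand or a Tao--Vu style smoothed indicator built from $\log|P_n^*|^2$. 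Two analytic inputs are required: (i) quantitative bounds on $p_i^*$ in $L^2$ and $L^\infty$ on $I_j$, available from \cite{Lubinskybookexp}, and (ii) small-ball estimates
$$
\sup_{u,v\in\R}\P\!\left(\,|P_n^*(s)-u|\le \ep,\; |(P_n^*)'(s)-v|\le \ep\,\right) \;\lesssim\; \ep^{\,2-o(1)}
$$
for every $s\in I_j$.

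The main obstacle, and the central new technical contribution highlighted in the abstract, is ingredient (ii). For compactly supported $\mu$ the values $p_i(x)$ are bounded and the inverse Littlewood--Offord method of \cite{DOVorthonormal} applies almost directly. Here $p_i(x)$ grows, and only the weighted values $W(x)p_i(x)$ are uniformly controlled in the bulk. The plan is therefore to pass to the weighted polynomial $WP_n$ and use $\{P_n^*=0\} = \{WP_n^*=0\}$ on $\{W>0\}$; the small-ball bound then becomes an anti-concentration statement for the pair $(W(a_n s)P_n(a_n s), (WP_n)'(a_n s))$, which by inverse Littlewood--Offord reduces to showing that the vector $(W(a_n s) p_i(a_n s))_{i=0}^n$ avoids generalised arithmetic progressions of small rank. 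To prove this non-structure I would exploit the three-term recurrence
$$
x\, p_i(x) = A_{i+1}\, p_{i+1}(x) + A_i\, p_{i-1}(x), \qquad i\ge 1,
$$
(the diagonal coefficient vanishing because $Q$ is even), whose coefficients $A_i$ have precise asymptotics in \cite{Lubinskybookexp}. Any putative linear relation among the weighted values $W(x)p_i(x)$ propagates through $i$ via this recurrence, and quantifying the resulting incompatibility yields the desired lack of additive structure and hence the anti-concentration bound.

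With (i) and (ii) in place, the Lindeberg swap produces local universality on each $I_j$; summing over $j$ and combining with the bulk/edge/tail decomposition yields the global comparison displayed at the outset, and substituting the asymptotics from Theorems \ref{thm:gau:global} and \ref{thm:gau:local} completes parts (a) and (b).
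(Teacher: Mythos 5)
Your overall architecture (compare with the Gaussian model of Theorems \ref{thm:gau:global}--\ref{thm:gau:local}, pass to the weighted polynomial $W(a_n\cdot)P_n^*$, Lindeberg-type swapping on $O(1/n)$ intervals in the bulk, inverse Littlewood--Offord plus the three-term recurrence for anti-concentration, and a bulk/edge/tail decomposition) matches the paper's strategy, but your formulation of the key anti-concentration input contains a genuine gap. You ask for a \emph{pointwise} joint small-ball estimate $\sup_{u,v}\P(|P_n^*(s)-u|\le\ep,\,|(P_n^*)'(s)-v|\le\ep)\lesssim\ep^{2-o(1)}$ at every fixed bulk point $s$, and you propose to prove the required non-structure of the vector $\bigl(W(a_n s)p_i(a_n s)\bigr)_{i\le n}$ by arguing that a putative GAP relation ``propagates through the recurrence'' and is thereby incompatible. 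At a single point $s$ this does not work: the recurrence coefficients $A_m,B_m$ are real numbers with no arithmetic content, and nothing prevents the $n$ values $W(a_ns)p_i(a_ns)$ from lying within $\exp(-n^{c_1})$ of a rank-$O(1)$ GAP of size $n^{O(1)}$ at one particular $s$ --- there is no counting or algebraic obstruction, and the relation you would extract from the structure plus the recurrence is merely that some low-degree polynomial with controlled coefficients is small \emph{at that one point}, which is no contradiction. The paper's Lemma \ref{lm:cond-smallball'} is correspondingly weaker (it only produces \emph{some} good point $x_1$ in each interval of length $c/n$, which is exactly what Condition (C4) of the Nguyen--Vu framework needs), and its proof hinges on a step your sketch is missing: by continuity and pigeonhole (Lemma \ref{lm:n'}) one finds a subset $\CE$ of the interval of Lebesgue measure $\gg n^{-2}$ on which the GAP rank and the integer coefficients $k_{ij}$ are \emph{the same for all $x\in\CE$}; only then do the structure relations become approximate polynomial identities in $x$ valid on a set of measure $n^{-O(1)}$, and the contradiction comes from the Remez inequality (Lemma \ref{lm:S}), which forbids a bounded-degree polynomial with not-too-small leading coefficient from being exponentially small on a set of that measure. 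Without the ``same structure on a positive-measure set of $x$'' step, the argument collapses; moreover the rate $\ep^{2-o(1)}$ uniformly down to $\ep=\exp(-n^{c_1})$ is stronger than anything the method yields and is not needed.

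Two further points are asserted rather than proved in your sketch. First, the edge/tail bound for the \emph{non-Gaussian} model is not automatic: knowing that $\mu_\alpha$ has bounded density controls the Gaussian model (via \cite{LPX2}), but for general coefficients the paper has to prove the almost-sure weak* convergence of the zero counting measures (Theorem \ref{thm:zeroasympt}, which itself needs the anti-concentration of the top coefficients in Lemma \ref{lem11.3} and the theory of asymptotically extremal weighted polynomials) and then upgrade it to convergence in expectation (Corollary \ref{cor:explimit}) before deducing Theorem \ref{thm:edge}; a one-line appeal to ``decay of $Wp_n$ plus Jensen's formula'' does not substitute for this, since Jensen requires a lower bound on $|P_n^*|$ at a reference point with quantitative probability and a way to take expectations. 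Second, the swapping argument is carried out in the paper for $F_n=W(a_n\cdot)P_n^*$, which is not analytic, and this forces genuine modifications of the framework (the replacement of the Green's-formula identity and the $L^2\log$-estimate of Lemma \ref{lm:2norm} proved via Harnack's inequality, plus the relaxed second-derivative condition (C3)); if you instead run the swap on smoothed functionals of $\log|P_n^*|$ you must still verify boundedness and anti-concentration for the correct (weighted) object, so this issue needs to be addressed explicitly rather than bypassed.
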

 
In fact we will show that the $k$-correlations of the real roots of $P_n^\ast$ are universal for any fixed $k$, see Theorem \ref{thm:general:real}.

As a direct application, we obtain the statement for random Freud polynomials.
\begin{corollary}
Assume that the coeffcients $\{\xi_k\}_{k=0}^n$ are independent real-valued random variables such that $\E[\xi_k]=0, \Var[\xi_k]=1,$ and $\E[|\xi_k|^{2+\ep_0}] < C$ for some constants $C,\ep_0>0$ and all $k=0,1,\dots$. For any constants $\lambda>1$ and $c>0$, consider the random orthogonal polynomial $P_n$ associated with the Freud weight $w(x) = e^{-c|x|^{\lambda}}$ for $x\in \R$.  Then the expected number of real zeros of  $P_n$ satisfies \eqref{1.4}. Moreover, if $[a,b] \subset (-1,1)$ then the real zeros of polynomials $P_n^*$, defined in \eqref{1.5}, satisfy \eqref{1.6} with $\alpha = \lambda$.
\end{corollary}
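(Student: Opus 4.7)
The plan is straightforward: the corollary is a direct instantiation of Theorem \ref{thm:main}, so one needs only to verify that the Freud weight $w(x) = e^{-c|x|^\lambda}$ with $\lambda > 1$ fits into its framework. Matching the convention $d\mu(x) = W(x)^2\,dx$ with $W = e^{-Q}$, we take $Q(x) = \tfrac{c}{2}|x|^\lambda$ (or $Q(x) = c|x|^\lambda$ if one identifies $w$ directly with $W$, as the paper does when discussing (E6); the verification is identical up to a harmless rescaling of $c$). The moment assumption on $\{\xi_k\}$ in the corollary is exactly that of Theorem \ref{thm:main}, and $Q$ is manifestly even, so it remains to check $W \in \mathcal{F}(C^2)$ and the limit condition \eqref{1.3}.

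Conditions (a)--(c) of Definition \ref{def1.1} are immediate for $\lambda > 1$: $Q(0) = 0$; $Q'$ is continuous on $\R$ because the exponent $\lambda - 1$ is positive; $Q$ is convex, so $Q'$ is non-decreasing; $Q''$ exists on $\R \setminus \{0\}$; and $Q(t) \to \infty$ as $|t| \to \infty$. A direct computation gives
\[
T(t) = \frac{tQ'(t)}{Q(t)} = \lambda \qquad \text{for all } t \ne 0,
\]
so $T$ is the constant $\lambda > 1$. This is trivially quasi-increasing with $C_1 = 1$, and bounded below by any $\Lambda \in (1, \lambda]$, verifying (d). For (e),
\[
\frac{Q''(x)}{|Q'(x)|} = \frac{\lambda - 1}{|x|} \;\le\; \frac{\lambda}{|x|} = \frac{|Q'(x)|}{Q(x)},
\]
so $C_2 = 1$ works. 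Hence $W \in \mathcal{F}(C^2)$, and since $T \equiv \lambda$, condition \eqref{1.3} holds with $\alpha = \lambda \in (1, \infty)$.

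All hypotheses of Theorem \ref{thm:main} are therefore met. Parts (a) and (b) of that theorem then immediately deliver \eqref{1.4} for the random Freud polynomial $P_n$ and \eqref{1.6} for its contracted version $P_n^*$, in both cases with $\alpha = \lambda$. There is no substantive obstacle in the corollary itself; all the genuine difficulty is absorbed into the proof of Theorem \ref{thm:main}, and the work here reduces to the routine checks above.
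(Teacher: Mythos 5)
Your proposal is correct and follows exactly the route the paper intends: the paper offers no separate argument for the corollary beyond noting that Freud weights with $\lambda>1$ lie in $\mathcal{F}(C^2)$ (with $T\equiv\lambda$, so \eqref{1.3} holds with $\alpha=\lambda$) and then citing Theorem \ref{thm:main}. Your explicit verification of Definition \ref{def1.1}(a)--(e), including the remark that the normalization $W=w^{1/2}$ only rescales $c$ harmlessly, is precisely the ``straightforward check'' the paper leaves to the reader.
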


Next, define the normalized zero counting measure $\tau_n=\frac{1}{n}\sum_{k=1}^n \delta_{z_{k,n}}$ for the scaled  polynomial $P_n^*$ \eqref{1.5}, where $\{z_{k,n}\}_{k=1}^n$
are its zeros in $\C$, counted with multiplicities, and $\delta_z$ denotes the unit point mass at $z$. We find the weak* limit of $\tau_n$ in the case of random coefficients satisfying the assumptions of Theorem
\ref{thm:main}, and thus extend Theorem 2.4 of \cite{LPX2}.

\begin{theorem} \label{thm:zeroasympt}
Assume that the coefficients $\{\xi_k\}_{k=0}^n$ are independent real-valued random variables such that $\E[\xi_k]=0, \Var[\xi_k]=1,$ and $\E[|\xi_k|^{2+\ep_0}] < C$ for some constants $C,\ep_0>0$ and all $k=0,1,\dots$. Let $W=e^{-Q}\in \mathcal{F} (C^2)$, where $Q$ is even. If the function $T$ in the definition of $\mathcal{F} (C^2)$ satisfies \eqref{1.3} with $\alpha\in(1,\infty)$, then the normalized zero counting measures $\tau_n$ for the scaled  polynomials $P_n^*(s)$ converge to $\mu_{\alpha}$ of \eqref{Ullman} in the weak* topology with probability one.
\end{theorem}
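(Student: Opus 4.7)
Plan. My approach is potential-theoretic, extending the Gaussian argument behind Theorem 2.4 of \cite{LPX2} by replacing the Gaussian anti-concentration input with one valid under the $(2+\ep_0)$-moment assumption. Put $f_n(z):=\frac{1}{n}\log|P_n^*(z)|$; each $f_n$ is subharmonic on $\C$ with Riesz measure equal to $\tau_n$ up to the $\frac{1}{2\pi}$ factor and the leading-coefficient contribution. The Ullman distribution $\mu_\alpha$ is itself the Riesz measure of the explicit subharmonic function $g(z):=\int\log|z-w|\,d\mu_\alpha(w)+C_0$, where $C_0$ is determined by the asymptotic leading coefficient of $p_n(a_n\,\cdot\,)$ established in \cite{Lubinskybookexp}. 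Consequently, the desired weak-$*$ convergence $\tau_n\to\mu_\alpha$ a.s.\ reduces to showing $f_n\to g$ in $L^1_{\mathrm{loc}}(\C)$ a.s., and to prove the latter I plan to use a two-sided pointwise a.s.\ estimate on a non-polar set together with H\"ormander's convergence theorem for subharmonic functions (cf.\ the framework in Ibragimov--Zaporozhets).

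For the upper bound, I fix $z\in\C$ and use $\E|P_n^*(z)|^2=\sum_{i=0}^n|p_i(a_n z)|^2\sim e^{2ng(z)}$ up to polynomial-in-$n$ factors, by the orthogonal-polynomial asymptotics in \cite{Lubinskybookexp}. Markov's inequality combined with the $(2+\ep_0)$-moment bound (via a Rosenthal-type estimate) gives $\P(|P_n^*(z)|>e^{n(g(z)+\ep)})\le n^{O(1)}e^{-cn\ep}$, which is summable; Borel--Cantelli yields $\limsup_n f_n(z)\le g(z)$ a.s., and the same moment control produces a uniform local $L^1$ upper bound on $f_n$. For the lower bound, fix $z$ with $g(z)>0$. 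The asymptotics imply that $p_i(a_n z)/p_n(a_n z)$ decays exponentially as $n-i$ grows, so $P_n^*(z)=p_n(a_n z)\bigl(\xi_n+R_{n-1}(z)\bigr)$, where $R_{n-1}(z):=\sum_{i<n}\xi_i\,p_i(a_n z)/p_n(a_n z)$ is square-integrable and independent of $\xi_n$. A Kolmogorov--Rogozin/Esseen-type concentration-function estimate applied to the sum of independent pieces of $P_n^*(z)$ should bound $\P(|P_n^*(z)|<e^{n(g(z)-\ep)})$ by a quantity summable in $n$, and Borel--Cantelli then gives $\liminf_n f_n(z)\ge g(z)$ a.s.

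Once the pointwise bounds hold a.s.\ on a dense subset of $\{z:g(z)>0\}$ and the uniform $L^1_{\mathrm{loc}}$ upper bound is in place, the H\"ormander subharmonic convergence lemma upgrades this to $f_n\to g$ in $L^1_{\mathrm{loc}}(\C)$ a.s., and taking distributional Laplacians yields $\tau_n\to\mu_\alpha$ weak-$*$ a.s. The principal obstacle I expect is the lower bound when $\xi_i$ is atomic (for instance Rademacher), where the concentration function of a single $\xi_n$ does not tend to $0$, so one cannot condition naively on $R_{n-1}(z)$. The natural remedy is to apply the concentration-function machinery to the full sum $P_n^*(z)$ and to use the three-term recurrence to guarantee that sufficiently many coordinates contribute nontrivially at scale $e^{-n\ep}$; this is precisely where the inverse Littlewood--Offord input flagged in the abstract is likely essential. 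All remaining pieces---orthogonal-polynomial asymptotics, Rosenthal-type moment inequalities, and H\"ormander's subharmonic convergence lemma---are classical.
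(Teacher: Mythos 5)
Your overall strategy (convergence of the subharmonic functions $\frac{1}{n}\log|P_n^*|$ to the weighted potential $g$, then passing to Riesz measures) is a legitimate and genuinely different route from the paper, which instead builds monic modifications $M_n$ of $P_n^*$ (dividing out the $O(\log n)$ largest roots and the degree defect), proves they are asymptotically extremal for the Freud weight, and quotes the Saff--Totik theorem that zero counting measures of asymptotically extremal weighted polynomials converge to $\mu_\alpha$. The upper-bound half of your plan is fine and essentially parallels the paper's use of the Nikolskii bound \eqref{3.0} together with $\limsup_n(\max_{k\le n}|\xi_k|)^{1/n}\le 1$. The genuine gap is in your lower bound. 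You need $\P\left(|P_n^*(z)|<e^{n(g(z)-\ep)}\right)$ to be \emph{summable} in $n$ at fixed $z$, but the Kolmogorov--Rogozin/Esseen machinery you invoke cannot deliver that: at scale $e^{n(g(z)-\ep)}$ at most $O(\ep n)$ of the coefficients $p_i(a_nz)$ are relevant, and the concentration function of a sum of $m$ independent non-degenerate terms is in general only $O(m^{-1/2})$ (this is sharp for Rademacher $\xi_i$ with comparable coefficients), so Borel--Cantelli does not close. Your proposed remedy --- inverse Littlewood--Offord plus the three-term recurrence --- is exactly the content of the paper's hardest lemma (Condition (C4), Section 7), but note two things: that argument only produces, with probability $1-n^{-A}$, \emph{some} good point in each length-$1/n$ real interval in the bulk (the structure of the coefficient vector at a single prescribed point cannot be excluded; the proof pigeonholes over a positive-measure set of $x$ and uses Remez), and it concerns the weighted polynomial at real bulk points at scale $\exp(-n^{c_1})$, not a fixed complex $z$ at scale $e^{-\ep n}$. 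Adapting it to your setting is a substantial project, and you would additionally have to be careful that the set on which you obtain the lower bound actually forces the subharmonic limit to equal $g$ (a dense set of complex points works since $g$ is continuous and the limit is upper semicontinuous; a lower bound only on the real segment would require a separate domination/unicity argument).

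It is worth seeing how the paper sidesteps all of this: its only anti-concentration input for this theorem is Lemma \ref{lem11.3}, which bounds below $\max_{n-b\log n<k\le n}|c_{k,n}|^{1/n}$ for the \emph{monomial} coefficients of $P_n^*$. Summability there comes cheaply, because conditioning successively on $\xi_{k+1},\dots,\xi_n$ turns the event into a product of $\Theta(\log n)$ independent events each of probability at most $1-a'$ (via the elementary Lemma \ref{lm:anti}), giving failure probability $n^{-2}$. Your fixed-$z$ scheme has no analogue of this telescoping trick, and without it the a.s. (as opposed to in-probability) lower bound is exactly the missing piece. So as written the proposal does not constitute a proof; either import the paper's coefficient-based extremality argument, or be prepared to prove a new pointwise small-ball estimate of strength $n^{-1-\delta}$ at exponentially small scales, which is not available from the classical concentration-function inequalities you cite.
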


The following corollary is useful in the proof of Theorem \ref{thm:main}.

\begin{corollary} \label{cor:explimit}
Suppose that the assumptions of Theorem \ref{thm:zeroasympt} hold. If $E\subset\C$ is any compact set satisfying $\mu_\alpha(\partial_{\C} E)=0,$ then
\begin{equation} \label{explimit}
\lim_{n\rightarrow\infty} \frac{1}{n}\E\left[N_n^*(E)\right] = \mu_\alpha(E).
\end{equation}
\end{corollary}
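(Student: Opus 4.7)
The plan is to derive the corollary from Theorem \ref{thm:zeroasympt} by a standard Portmanteau-plus-bounded-convergence argument, turning the almost sure weak$^*$ statement into a statement about expectations.

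First, I would unpack what is to be proved. Since $\tau_n = (1/n)\sum_{k}\delta_{z_{k,n}}$ places mass $1/n$ at each zero of $P_n^\ast$, the integration against the indicator of $E$ gives $\tau_n(E) = N_n^\ast(E)/n$, so it suffices to show $\E[\tau_n(E)] \to \mu_\alpha(E)$. Theorem \ref{thm:zeroasympt} provides exactly the pathwise input: on an event of probability one, $\tau_n \to \mu_\alpha$ in the weak$^\ast$ topology.

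Next, I would invoke the Portmanteau theorem on a full-probability event. The measure $\tau_n$ is a positive Borel measure on $\C$ with total mass $(\deg P_n^\ast)/n \in [(n-1)/n,1]$, since at worst only the leading coefficient $\xi_n$ can vanish; in particular $\tau_n(\C) \to 1 = \mu_\alpha(\C)$. Combined with weak$^\ast$ convergence to $\mu_\alpha$ and the support condition $\supp \mu_\alpha \subseteq [-1,1]$, this rules out any escape of mass to infinity and places us in the regime where the Portmanteau theorem applies. Applying it to the compact set $E$, for which $\mu_\alpha(\partial_\C E) = 0$ by hypothesis, yields $\tau_n(E) \to \mu_\alpha(E)$ almost surely.

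Finally, I would pass from almost sure convergence to convergence of expectations. Because $\tau_n(E) = N_n^\ast(E)/n \in [0,1]$ is uniformly bounded, the bounded convergence theorem gives
$$\lim_{n\to\infty} \frac{1}{n}\E[N_n^\ast(E)] = \lim_{n\to\infty} \E[\tau_n(E)] = \E[\mu_\alpha(E)] = \mu_\alpha(E),$$
which is \eqref{explimit}. There is no serious obstacle here; the only minor subtlety is the sub-probability issue when $\xi_n$ may vanish, but the $O(1/n)$ deficit in total mass is harmless since $\mu_\alpha$ is a probability measure and the deficit vanishes in the limit, so the Portmanteau upgrade on continuity sets still goes through verbatim.
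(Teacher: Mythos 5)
Your proposal is correct and is essentially the paper's own argument: the paper omits the proof, stating it is identical to Corollary 2.5 of \cite{LPX2}, which is exactly this Portmanteau-plus-bounded-convergence upgrade of the almost sure weak$^*$ convergence from Theorem \ref{thm:zeroasympt}, using $0\le N_n^*(E)/n\le 1$. One small correction: your claim that ``at worst only the leading coefficient $\xi_n$ can vanish,'' so that $\tau_n(\C)\ge (n-1)/n$ deterministically, is not right --- the $\xi_k$ are not assumed atomless, so with positive probability several of the top coefficients $\xi_n,\xi_{n-1},\dots$ vanish simultaneously and the degree can drop by more than one. This does not harm the argument: on the full-probability event one has $n-d_n=o(n)$ (this is exactly what Lemma \ref{lem11.3} gives, as used in the proof of Theorem \ref{thm:zeroasympt}), so $\tau_n(\C)\to 1$ almost surely; alternatively, since $E$ is compact and $\mu_\alpha(\partial_\C E)=0$, testing against compactly supported continuous functions squeezed between $\mathbf{1}_{E^\circ}$ and $\mathbf{1}_{E^\delta}$ already yields $\tau_n(E)\to\mu_\alpha(E)$ without any tightness or mass-conservation input, and the bounded convergence step is unaffected since $\tau_n(E)\le 1$ in all cases.
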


\begin{remark}
In Theorem \ref{thm:zeroasympt} and Corollary \ref{cor:explimit}, the conditions on the random coefficients can be relaxed. In the proof for these results, we only need to assume that the random variables $\{\xi_k\}_{k=0}^n$ are independent with mean 0, variance 1, and there exist constants $A>0, a\in (0, 1)$ such that
\[
\E[\log^+|\xi_k|] < A, \quad \P(|\xi_k-x|\le a)\le 1-a,\quad k=0,1,\dots.
\]
\end{remark}

\section{The method of proof and universality of the correlations}

To establish Theorem \ref{thm:main}, we apply the universality framework of \cite{nguyenvurandomfunction17}. This approach was earlier developed in Tao--Vu \cite{TVpoly}, Do--Nguyen--Vu \cite{DOV}, and it now becomes one of the standard methods in the field. The global idea is to compare the distribution of roots of $\ps$ with that of the Gaussian counterpart, namely
$$\wps(x) = \sum_{k=0}^{n} \txi_k p_k(a_n x)$$
where $\txi_k$ are iid standard Gaussian. Generally speaking, if none of the $p_k$ dominates the sum (see the Delocalization Condition C3 below), then the effect of changing the distribution of the coefficients is small and so the roots of $\ps$ have roughly the same distribution as those of $\wps$. For technical reasons, and in particular to tame the growth of the orthogonal polynomials $p_k$, we shall work with the random functions
\[
F_n(x) = P_n^*(x) W(a_n x) = \sum_{k=0}^{n} \xi_k p_k(a_n x) W(a_n x) = \sum_{k=0}^{n} \xi_k f_k(x),
\]
where $f_k(x) = p_k(a_n x) W(a_n x),\ k=1,\ldots,n.$ Since $W(x)\neq 0,\ x\in\R,$ the real zeros of $F_n$ and $P_n^* $ are clearly identical. Further, we need an extension of the weight $W$ into the complex plane, which is convenient to obtain via the potential theoretic argument below. Following \cite{Lubinskybookexp} and \cite{ST}, we introduce the Mhaskar-Rakhmanov-Saff interval $\Delta_n :=[-a_n, a_n]$. The equilibrium density (or the density of the weighted equilibrium measure) is defined as
\[
\sigma_n(x)=\displaystyle \frac{\sqrt{(x-a_{-n})(a_n-x)}}{\pi^2}\int_{a_{-n}}^{a_n} \frac{Q'(s)-Q'(x)}{s-x}\frac{ds}{\sqrt{(s-a_{-n})(a_n-s)}}, \, x\in \Delta_n.
\]
It satisfies the equilibrium equation \cite[p. 41]{Lubinskybookexp}:
\begin{align} \label{wequil}
\int_{a_{-n}}^{a_n} \log \frac{1}{|x-t|}\sigma_n(t)\, dt + Q(x) = M, \quad x\in \Delta_n.
\end{align}
Note that the weighted equilibrium measure $\sigma_n(x)\, dx$ is supported on $\Delta_n$, and has total mass $n$:
\[
\int_{a_{-n}}^{a_n} \sigma_n(x)\, dx=n.
\]
For details on $\sigma_n$, we refer to the book \cite{Lubinskybookexp}. We now set
\begin{align} \label{extension}
Q(z) := M - U^{\mu_n}(z)\ \mbox{and}\ W(z) := \exp\left(-Q(z)\right), \quad z\in \C\setminus[-a_n,a_n],
\end{align}
where $U^{\mu_n}(x)$ is the logarithmic potential of $\mu_n:$
\[
U^{\mu_n}(x) := - \int_{a_{-n}}^{a_n} \log |x-t|\sigma_n(t)\, dt.
\]
This gives the desired extension of $W$ into $\C$,
with $Q$ being harmonic in $\C\setminus[-a_n,a_n]$ and continuous in $\C,$ cf. \cite{Lubinskybookexp} and \cite{ST}.

The exact conditions for the applicability of the universality method are stated  below. For any constant $\ep>0$, we shall show that there exists a constant $b>0$ such that for any positive constants $c_1,  A$, there exists  $C_1>0$ for which the following conditions hold for the interval $D(\ep) =  [-1+\ep, 1-\ep]$.

\begin{enumerate}
	
    \item [(C1)] {\it Boundedness:} \label{cond-bddn} For any $z\in D(\ep)$, with probability at least $1 - C_1 n ^{-A}$, $|F_n(w)|\le \exp(n^{c_1})$ for all $w\in B_{\C}  (z, 1/n)$.
	
	\item [(C2)] {\it Delocalization:}\label{cond-delocal} For every $z \in D(\ep)+B_{\C} (0, 1/n)$, for every $k = 0, \dots, n$,
	$$\frac{|p_k(a_n z)|}{\sqrt{\sum _{j = 0}^{n}|p_j(a_n z)|^{2}}}\le C_1 n^{-b}.$$
	
	\item [(C3)] {\it Derivative growth:}\label{cond-repulsion} For any $s\in D(\ep)$,
	\begin{equation}
	\sum_{j=0}^{n} |p_j'(a_n s)|^{2} a_n^2 \le C_1 n ^{2+c_1}\sum_{j=0}^{n} |p_j(a_n s)|^{2},\nonumber
	\end{equation}
	\begin{equation}
	\sup_{z\in B_{\C} (s, 1/n)} \sum_{j=0}^{n} |p_j''(a_n z)|^{2} a_n^4 \le C_1 n ^{4+c_1}\sum_{j=0}^{n} |p_j(a_n s)|^{2}.\nonumber
	\end{equation}
	
		\item  [(C4)] {\it Anti-concentration:}\label{cond-smallball1} For every $z\in D(\ep)$, with probability at least $1 - C_1n^{-A}$, there exists $w\in B_{\C} (z, 1/n)$ for which $|F_n(w)|\ge \exp(-n^{c_1})$.

\end{enumerate}

The following key lemma shows that these conditions are satisfied by our $F_n$.
\begin{lemma}\label{lm:conditions}
 	Assume the hypothesis of Theorem \ref{thm:main}. For any constant $\ep>0$, there exists a constant $b>0$ such that for any positive constants $c_1,  A$, there exists  $C_1>0$ for which Conditions (C1)--(C4) hold for the interval $D(\ep) = [-1+\ep, 1-\ep]$.
\end{lemma}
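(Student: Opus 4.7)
The plan is to verify conditions (C1)--(C4) in order, with (C1)--(C3) following from standard asymptotic bounds for orthogonal polynomials in the class $\mathcal{F}(C^2)$, and (C4) requiring a genuinely new anti-concentration argument built on top of the three-term recurrence. The two deterministic inputs I would quote from \cite{Lubinskybookexp} are the Plancherel--Rotach-type pointwise bound $|p_k(a_n x) W(a_n x)| = O(1)$ uniformly in $x \in D(\ep)$ and $0 \le k \le n$, and the Christoffel function estimate $\sum_{k=0}^n p_k(a_n x)^2 W(a_n x)^2 \asymp n$ in the bulk; together with the extension \eqref{extension} of $W$ into $\C$, these extend to a polynomial-in-$n$ bound on $|f_k(w)|$ and a two-sided polynomial bound on $\sum_k |f_k(w)|^2$ for $w$ in a $1/n$-neighbourhood of $D(\ep)$ (by subharmonicity of $\log|p_k W|$ and harmonicity of $-\log|W|$ off $\Delta_n$).

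Given these, (C2) is immediate: the ratio in (C2) is $O(n^{-1/2})$, which beats $n^{-b}$ for any $b < 1/2$. For (C3), I would differentiate via Cauchy's integral formula on a disc of radius comparable to $a_n/n$; each derivative contributes a factor $n/a_n$, which is exactly absorbed by the $a_n^2$ and $a_n^4$ prefactors on the left-hand sides, and the resulting upper bound is compared to $\sum_j |p_j(a_n s)|^2$ using the Christoffel estimate. For (C1), I would write $|F_n(w)| \le \bigl(\sum_k |\xi_k|^2 \bigr)^{1/2} \bigl(\sum_k |f_k(w)|^2\bigr)^{1/2}$ and estimate the first factor by Chebyshev using the uniform $(2+\ep_0)$-moment hypothesis and the second by the extended Christoffel bound. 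This gives a bound much smaller than $\exp(n^{c_1})$ with failure probability $\le C_1 n^{-A}$.

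The main obstacle is (C4). Fix $z \in D(\ep)$ and choose a convenient $w \in B_\C(z, 1/n)$, say $w = z$; the goal is to bound $\P\bigl(|F_n(w)| \le \exp(-n^{c_1})\bigr)$. Normalize the coefficient vector by $\sigma_w := \bigl(\sum_k |f_k(w)|^2\bigr)^{1/2} \asymp \sqrt{n}$, so that $\sum_k |f_k(w)/\sigma_w|^2 = 1$. I would then invoke an inverse Littlewood--Offord theorem of Nguyen--Vu type in the form: either the small-ball probability at scale $\exp(-n^{c_1})/\sigma_w$ is bounded by $n^{-A}$, or a $(1 - n^{-b'})$-fraction of the normalized coefficients $f_k(w)/\sigma_w$ must lie in a generalized arithmetic progression of rank $O(1)$ and polynomial size. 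The new idea is to exclude the second alternative via the three-term recurrence $x\, p_k(x) = b_{k+1} p_{k+1}(x) + a_k p_k(x) + b_k p_{k-1}(x)$: if $p_{k-1}(a_n w), p_k(a_n w), p_{k+1}(a_n w)$ all lie in such a GAP with $a_n w$ in a fixed compact set, the recurrence forces the GAP structure to propagate linearly in $k$, and this rigidity is incompatible with the Plancherel--Rotach oscillation of $p_k(a_n w)W(a_n w)$, which sweeps through all phases on a positive-density subsequence of $k$.

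The hardest step I anticipate is making this incompatibility argument quantitative. I would fix a candidate GAP, partition the indices $\{0,\ldots,n\}$ into consecutive blocks across which the Plancherel--Rotach phase rotates through a full period, and apply the recurrence on each block to extract a Diophantine system in the GAP generators that has no small solutions; a union bound over a polynomial-size net of candidate GAPs, as in the standard Nguyen--Vu framework, then controls the failure probability by $n^{-A}$. Having verified (C1)--(C4) at the single point $w$ already suffices for (C4) since it merely requires the existence of such a $w$ in $B_\C(z,1/n)$. With the lemma in hand, Theorem~\ref{thm:main} follows from the universality machinery of \cite{nguyenvurandomfunction17} applied to the weighted random functions $F_n$.
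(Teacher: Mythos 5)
Your treatment of (C4) — the only genuinely hard part of the lemma — rests on a mechanism that I do not believe can work. You fix a single point $w=z$ and try to refute the structural alternative of the inverse Littlewood--Offord theorem at that point, arguing that a rank-$O(1)$, polynomial-size GAP containing (most of) the values $W(a_nw)p_k(a_nw)$ is "incompatible with the Plancherel--Rotach oscillation" in $k$. But at a single point the GAP conclusion only says that the $n+1$ real numbers $v_k=W(a_nw)p_k(a_nw)$ admit many integer linear relations with polynomially bounded coefficients up to errors $n^{O(1)}\exp(-n^{c_1})$, and phase equidistribution in $k$ does not contradict this: for exceptional values of $w$ such near-relations (e.g.\ $v_{k+1}/v_k$ exponentially close to a rational of polynomial height) can perfectly well occur, and neither your "Diophantine system with no small solutions" nor the union bound over a "net of candidate GAPs" is meaningful here — the generators are arbitrary reals at scale $\exp(-n^{c_1})$, so no polynomial-size net exists, and in any case the refutation has to be deterministic, not probabilistic. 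This is exactly why the paper's Lemma \ref{lm:cond-smallball'} is stated as the existence of \emph{some} good $x_1$ in the $c/n$-interval rather than a bound at every point: its proof assumes the small-ball bound fails for \emph{all} $x$ in the interval, pigeonholes over a subset $\CE$ of measure $\gtrsim n^{-2}$ to make the GAP rank and the integer coefficient vectors $\Bk_i$ constant in $x$, uses the jumped three-term recurrence (Lemma \ref{lm:jump3}) to convert the resulting linear relations into explicit two-term and then one-term degeneracies whose coefficients are low-degree polynomials in $x$ with controlled heights (Lemmas \ref{lm:2deg}, \ref{lm:1deg}), and finally contradicts the Remez inequality (Lemma \ref{lm:S}) because such a polynomial cannot be exponentially small on a set of measure $n^{-O(1)}$. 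The variation of $x$ over a positive-measure set is the essential ingredient your single-point sketch has no substitute for; as written, step (C4) of your proposal does not go through.

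For (C1)--(C3) your architecture is close to the paper's, but two of your quoted inputs are wrong and one of them matters. The pointwise bound $|p_k(a_nx)W(a_nx)|=O(1)$ uniformly in $0\le k\le n$, $x\in D(\ep)$ is false: for the intermediate indices $k$ with $a_k\approx(1-\ep)a_n$ the point $a_nx$ sits at the turning point of $p_k$, and the correct uniform bound carries the edge factor $k^{1/6}a_k^{-1/2}$ (this is what the paper uses in \eqref{4.3}); likewise the Christoffel estimate is $K_n(x,x)W^2(x)\asymp n/a_n$, not $\asymp n$. Consequently the delocalization ratio is not $O(n^{-1/2})$; the paper only gets $n^{-b}$ with $b$ depending on $\Lambda$ after splitting $k\le n^{\tau}$ and $k>n^{\tau}$ — harmless for the lemma, but your stated reasoning is not valid. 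More seriously, for (C3) the condition must hold for \emph{arbitrarily small} $c_1>0$, and the crude route "Cauchy's formula on a disc of radius $a_n/n$ plus the sup bounds" loses the factor $n^{1/3}$ coming from $k^{1/6}$, so it only yields the inequalities with $c_1\ge 1/3$. The paper instead differentiates the Lubinsky universality limit \eqref{4.7} for the reproducing kernel to get the sharp relative bound \eqref{7.2} (effectively $c_1=0$), and the same limit is what supplies the two-sided comparison $K_n(x,x)/|K_n(w,w)|\le C$ for complex $w$ that you would need in (C2)--(C3); subharmonicity alone gives only upper bounds, not the lower bound on $\sum_k|f_k(w)|^2$ you invoke.
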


Assuming Lemma \ref{lm:conditions}, we have the following universal result of the correlations of real roots.

\begin{theorem}\label{thm:general:real}
	Assume the hypothesis of Theorem \ref{thm:main}. Let $k$ be any positive integer. Then there exist positive constants $C$ and $c$ independent of $n$ such that the following holds.
	For any real numbers $x_1,\dots, x_k$, all of which are in $D(\ep)$, and any function $G: \mathbb{R}^{k}\to \R$ supported on $\prod_{i=1}^{k}[x_i-c/n, x_i+c/n] $ with continuous derivatives up to order $2k+4$ and $||\triangledown^m G||_{\infty}\le n^{a}$ for all $0\le m\le 2k+4$, we have
	\begin{eqnarray}
	\left |\E\bigg[\sum G\left (\zeta_{i_1}, \dots, \zeta_{i_k}\right)\bigg]
	-\E\bigg[\sum G\left (\tilde \zeta_{i_1}, \dots, \tilde \zeta_{i_k}\right)\bigg] \right |\le C n^{-c},\nonumber
	\end{eqnarray}
	where the first sum runs over all $k$-tuples $(\zeta_{i_1}, \dots, \zeta_{i_k}) \in \R^{k} $ of the roots $\zeta_1, \zeta_2, \dots$ of $P^*_n$, and the second  sum runs over all $k$-tuples $(\tilde \zeta_{i_1}, \dots, \tilde \zeta_{i_k}) \in \R^{k}$ of the roots $\tilde \zeta_1, \tilde  \zeta_2, \dots$ of $ \wps$.
\end{theorem}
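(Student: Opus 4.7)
The overall plan is to reduce the statement to a universality comparison for the weighted random function $F_n$, and then to invoke the Lindeberg-type replacement machinery of \cite{nguyenvurandomfunction17}, which is designed precisely to convert the structural Conditions (C1)--(C4) into comparison bounds for $k$-point real-root correlations. Since $W(a_n x)>0$ for every real $x$, the real zeros of $P_n^\ast$ coincide with those of
\[
F_n(x) = P_n^\ast(x)\,W(a_n x) = \sum_{k=0}^{n}\xi_k f_k(x), \qquad f_k(x) := p_k(a_n x)\,W(a_n x),
\]
and likewise for the Gaussian version $\widetilde F_n=\sum_k \tilde\xi_k f_k$. It therefore suffices to prove the comparison for the real zeros of $F_n$ versus $\widetilde F_n$, which is exactly the setting of \cite{nguyenvurandomfunction17} once the structural conditions are supplied.

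The main body of the argument proceeds through a smoothed Kac--Rice representation followed by a Lindeberg swap. I would write
\[
\sum G(\zeta_{i_1},\dots,\zeta_{i_k}) \;\approx\; \int_{\R^k} G(y_1,\dots,y_k)\prod_{j=1}^{k}\eta_\delta\!\bigl(F_n(y_j)\bigr)\,|F_n'(y_j)|\,dy_1\cdots dy_k,
\]
where $\eta_\delta$ is a smooth approximation of the Dirac mass at $0$ taken at scale $\delta=n^{-B}$ with $B$ large. Conditions (C1) and (C4) guarantee, with probability $1-O(n^{-A})$, that $F_n$ is neither too large nor anomalously small on each $1/n$-window around $x_i$, which controls the smoothing error by $n^{-c}$; Condition (C3) bounds $F_n',F_n''$ and thereby ensures simple zeros with bounded derivative ratios. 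One then replaces $\xi_0,\dots,\xi_n$ one coordinate at a time by $\tilde\xi_0,\dots,\tilde\xi_n$ and Taylor-expands the smoothed integrand to third order in the swapped variable. The zeroth, first, and second order terms agree because the first two moments of $\xi_k$ and $\tilde\xi_k$ match; the third-order remainder is bounded using the $(2+\ep_0)$-moment hypothesis together with the delocalization bound $|f_k(y)|\le Cn^{-b}\sqrt{\sum_j |f_j(y)|^2}$ from (C2), giving a per-swap gain that compensates both the sum over $n+1$ coordinates and the blow-up $\eta_\delta^{(\ell)}\sim \delta^{-\ell-1}$ of the kernel. This is where the $2k+4$ derivatives of $G$ and the polynomial growth bound $\|\triangledown^m G\|_\infty\le n^a$ are used: after repeated integration by parts one trades derivatives of $\eta_\delta$ against derivatives of $G$, and enough orders are needed to accommodate $k$ integration variables plus a third-order Taylor remainder in each $\xi_k$.

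The main obstacle is not the swap itself, which is by now a standard template from \cite{TVpoly,DOV,nguyenvurandomfunction17}, but the need to balance the smoothing scale $\delta$ against the per-swap Taylor error: too small a $\delta$ makes the derivatives of $\eta_\delta$ blow up and spoils the swap estimate, while too large a $\delta$ makes the Kac--Rice approximation inaccurate. This balancing is exactly what is carried out in \cite{nguyenvurandomfunction17}; the polynomial factors $n^a$ from $G$ and $n^{c_1}$ from (C1) and (C3) are absorbed by choosing $c_1$ small relative to $b\ep_0$. Given Conditions (C1)--(C4) from Lemma \ref{lm:conditions}, the framework applies verbatim to $F_n$ and $\widetilde F_n$ and yields the stated bound $Cn^{-c}$.
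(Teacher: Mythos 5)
There is a genuine gap, and it sits exactly where your proposal waves the argument through: you assert that once Conditions (C1)--(C4) are supplied, the framework of \cite{nguyenvurandomfunction17} ``applies verbatim'' to $F_n$ and $\widetilde F_n$. It does not, and this is the whole point of the paper's proof of Theorem \ref{thm:general:real}. That framework relies on analyticity of the random function: linear statistics of roots are represented via Green's formula applied to $\log|F_n|$ (\cite[Equation 11]{nguyenvurandomfunction17}), and the key $L^2$ estimate on $\log|F_n|$ near a point (\cite[Lemma 8.2]{nguyenvurandomfunction17}) is proved using harmonicity of $\log|F_n|$ off its zero set. Here $F_n(x)=P_n^*(x)W(a_nx)$ is not analytic, so both steps fail as stated. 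The paper repairs them: the Green's-formula identity is replaced by splitting $\log|F_n|=\log|P_n^*|+\log|W(a_n\cdot)|$, the weight contributing only a deterministic constant $c(G)$; and the $L^2$ bound is re-proved as Lemma \ref{lm:2norm} by factoring out the zeros of $P_n^*$, counting them via Jensen's inequality, and running a Harnack-inequality argument on the harmonic function $\log|p|$, with the weight controlled through \eqref{eq:Q}. A further point the paper must (and does) address is that its condition (C3) is weaker than \cite[Condition C2(4)]{nguyenvurandomfunction17} --- the supremum sits outside the sum --- which forces a rederivation of Equations (24)--(25) of that paper via a Taylor expansion and H\"older's inequality; your proposal does not notice this either.

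Independently, the mechanism you actually describe is not the one in \cite{nguyenvurandomfunction17} and does not follow from (C1)--(C4). A smoothed Kac--Rice representation with kernel $\eta_\delta\bigl(F_n(y_j)\bigr)\,|F_n'(y_j)|$ at scale $\delta=n^{-B}$ cannot be justified by (C4): anti-concentration here only guarantees that \emph{somewhere} in each $1/n$-ball $|F_n|\ge \exp(-n^{c_1})$ with high probability, an exponentially weak bound that is usable only after taking logarithms (as in the actual scheme) and says nothing at polynomial scales; it controls neither the measure of the set where $|F_n|\le\delta$ nor any repulsion or simple-zero property. Likewise (C3) is a deterministic bound on $\sum_j|p_j'|^2$ and $\sum_j|p_j''|^2$ and does not ``ensure simple zeros with bounded derivative ratios.'' Finally, the balancing you invoke fails quantitatively: derivatives of $\eta_\delta$ cost $\delta^{-\ell-1}=n^{B(\ell+1)}$ per swap, while delocalization (C2) gains only a fixed polynomial factor $n^{-b}$ per moment, so with $B$ large the per-swap error is not summable over the $n+1$ coordinate replacements. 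In short, the proposal both misattributes the method and leaves unaddressed the central difficulty --- the non-analyticity of $F_n$ --- whose resolution constitutes the paper's Section on the proof of this theorem.
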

 This theorem is a generalization of \cite[Theorem 2.6]{nguyenvurandomfunction17}. We provide a proof in Section \ref{sec:proof:uni}.

It is worth mentioning, by triangle inequality, that Theorem \ref{thm:general:real} also holds true if $\xi_k$ and $\tilde \xi_k$ are any random variables with mean 0, variance 1, and uniformly bounded $(2+\ep_0)$ moments. In other words, it is not necessary that $\tilde \xi_k$ are standard Gaussian. In Section \ref{sec:correlation} we will provide various formulas for the $k$-correlation function of the real roots of $P^*_n$ (or of $P_n$) under the assumption that the random coefficients $\xi_i$ have smooth density functions.

Using Theorem \ref{thm:general:real}, we derive an estimate for the number of real roots at a local scale, which immediately gives the second part of Theorem \ref{thm:main} by decomposing a big interval into intervals of length $O(1/n)$. It is convenient here to use the notation $N_{\ps}(E)$ and $N_{\wps}(E)$ for the number of roots of $\ps$ and $\wps$ respectively in a set $E$.

\begin{theorem}\label{thm:local:expectation}
	Under the hypothesis of Theorem \ref{thm:main}, for any $\ep>0$, there exists a positive constant $c$ such that the following holds for any interval $[a, b]$ of length $O(1/n)$ inside $D(\ep)$:
	\begin{equation*}\label{key}
	\E[N_{\ps}([a, b])] - \E[N_{\wps}([a, b])] \ll n^{-c}.
	\end{equation*}
\end{theorem}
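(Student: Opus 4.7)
The plan is the standard sandwich argument, using Theorem \ref{thm:general:real} with $k=1$ to approximate the (non-smooth) indicator function of $[a,b]$ by smooth test functions and transferring the comparison to the Gaussian model, where the expected number of roots in a short interval is directly computable via Kac--Rice.

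Fix a small parameter $\delta=n^{-1-c_0}$ for a constant $c_0>0$ to be chosen later, and construct smooth bump functions $G_-\le \mathbf{1}_{[a,b]}\le G_+$ with $G_+$ supported in $[a-\delta,b+\delta]$, $G_-$ supported in $[a+\delta,b-\delta]$ (taking $G_-\equiv 0$ if $|b-a|\le 2\delta$), both with derivative bounds $\|G_\pm^{(m)}\|_\infty \ll \delta^{-m}\le n^{(1+c_0)m}$ for $0\le m\le 2k+4=6$. Because $|b-a|=O(1/n)$, the support of each $G_\pm$ lies in an interval of the form $[x_1-c/n,x_1+c/n]$ for an appropriate center $x_1\in D(\ep)$, so Theorem \ref{thm:general:real} (with $k=1$, $a=6(1+c_0)$) yields
\begin{equation*}
\Bigl|\E\bigl[\textstyle\sum_i G_\pm(\zeta_i)\bigr] - \E\bigl[\textstyle\sum_i G_\pm(\tilde\zeta_i)\bigr]\Bigr|\le C n^{-c_1}
\end{equation*}
for some constant $c_1>0$ (depending on $c_0$), where $\zeta_i$ are the real roots of $P_n^*$ and $\tilde\zeta_i$ those of $\wps$.

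Combining the obvious inequalities $\sum G_-(\zeta_i)\le N_{\ps}([a,b])\le \sum G_+(\zeta_i)$ with the analogous inequalities for $\tilde\zeta$, one obtains
\begin{equation*}
\bigl|\E[N_{\ps}([a,b])] - \E[N_{\wps}([a,b])]\bigr|\le 2C n^{-c_1} + \E\bigl[N_{\wps}(\CB_\delta)\bigr],
\end{equation*}
where $\CB_\delta=[a-\delta,a+\delta]\cup[b-\delta,b+\delta]$ is the boundary layer. The boundary term only needs to be controlled for the Gaussian model, since the non-Gaussian boundary has been swallowed into the comparison with $G_\pm$. For the Gaussian $\wps$ one has the classical Kac--Rice representation $\E[N_{\wps}(I)]=\int_I \rho_n(x)\,dx$ with the density $\rho_n$ bounded uniformly by $O(n)$ on $D(\ep)$ (as in \cite{LPX2}, this follows from the explicit formula together with the uniform delocalization and derivative bounds of Conditions (C2)--(C3) for the Gaussian case). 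Therefore $\E[N_{\wps}(\CB_\delta)]\ll n\delta = n^{-c_0}$, so choosing $c_0$ small (which then forces $c_1$ to depend on $c_0$ but still positive) gives the claimed bound with $c=\min(c_0,c_1)/2$.

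The main obstacle is keeping $\delta$ small enough that the boundary contribution $n\delta$ is $n^{-c_0}$, while simultaneously keeping the derivative bound $\delta^{-(2k+4)}\le n^a$ on the test functions so that Theorem \ref{thm:general:real} applies with a polynomial weight $a$; because $k=1$ the exponent $2k+4=6$ is fixed and one only needs a mild gap $\delta\ge n^{-1-c_0}$, so the constraints are compatible. The remaining point is the uniform $O(n)$ bound on the Gaussian Kac--Rice density on $D(\ep)$, which is not quite stated as Theorem \ref{thm:gau:local} (an asymptotic statement) but is a standard consequence of the reproducing kernel representation together with Christoffel function estimates in \cite{Lubinskybookexp}; this is where most of the technical care is needed.
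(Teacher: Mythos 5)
Your proposal is correct and takes essentially the same route as the paper: sandwich the indicator $\mathbf{1}_{[a,b]}$ between smooth bumps at scale $n^{-1-c_0}$, apply the $k=1$ case of Theorem \ref{thm:general:real}, and absorb the boundary layer using a Gaussian Kac--Rice bound $\E[N_{\wps}(I)]\ll n|I|$ on $D(\ep)$ obtained from the reproducing-kernel estimates \eqref{eq:KnW} and \eqref{7.2}. The only cosmetic difference is that the paper fixes the scale as $n^{-1-c/12}$ and rescales the test functions by $n^{-c/2}$ to meet the derivative-norm hypothesis of Theorem \ref{thm:general:real}, whereas you invoke the theorem directly with a larger exponent $a$; if that exponent must be small, your argument needs the same harmless rescaling.
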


Outside of the set $D(\ep)$, we show that the number of roots is negligible.
\begin{theorem}\label{thm:edge}
	Under the hypotheses of Theorem \ref{thm:main}, we have
	\begin{equation*}\label{key}
	\lim_{\ep\to 0} \lim _{n\to \infty}\frac{\E[N_{\ps}(\R \setminus D(\ep))]}{n} = \lim_{\ep\to 0} \lim _{n\to \infty}\frac{\E[N_{\wps}(\R \setminus D(\ep))]}{n} = 0.
	\end{equation*}
\end{theorem}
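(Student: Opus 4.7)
The plan is to deduce Theorem \ref{thm:edge} directly from Corollary \ref{cor:explimit}, exploiting the fact that the Ullman distribution $\mu_\alpha$ is absolutely continuous with support exactly $[-1,1]$, so $\mu_\alpha([-1,-1+\ep]\cup[1-\ep,1])\to 0$ as $\ep\to 0$. Since that corollary applies to both $\ps$ (under the hypotheses of Theorem \ref{thm:main}) and $\wps$ (standard Gaussian coefficients trivially satisfy them), a single argument will cover both cases.

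Fix $M>1$ (say $M=2$) and split
\[
\E\bigl[N_{\ps}(\R\setminus D(\ep))\bigr] \le \E\bigl[N_{\ps}([-M,M]\setminus D(\ep))\bigr] + \E\bigl[N_{\ps}(\R\setminus[-M,M])\bigr].
\]
For the first (near-edge) summand, I would introduce the compact $\C$-thickening
\[
E_\ep := \bigl\{z\in\C:\Re(z)\in[-M,-1+\ep]\cup[1-\ep,M],\ |\Im(z)|\le 1\bigr\}.
\]
This is a union of two closed rectangles, so its $\C$-boundary meets $\R$ only at the four points $\pm M,\pm(1-\ep)$; since $\mu_\alpha$ is absolutely continuous on $\R$, $\mu_\alpha(\partial_{\C}E_\ep)=0$. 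Corollary \ref{cor:explimit} then gives
\[
\lim_{n\to\infty}\frac{1}{n}\E\bigl[N^*_{\ps}(E_\ep)\bigr] = \mu_\alpha(E_\ep) = \mu_\alpha([-1,-1+\ep])+\mu_\alpha([1-\ep,1]),
\]
and every real root of $\ps$ lying in $[-M,M]\setminus D(\ep)$ is counted by $N^*_{\ps}(E_\ep)$, so the first summand is bounded by this quantity, which vanishes as $\ep\to 0$.

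For the second (far-tail) summand, apply Corollary \ref{cor:explimit} to the closed disk $\{|z|\le M\}$, whose boundary circle is disjoint from $\supp\mu_\alpha$; this yields $n^{-1}\E[N^*_{\ps}(\{|z|\le M\})]\to 1$. Since $\ps$ has degree at most $n$, we always have $N^*_{\ps}(\C)\le n$, so
\[
\frac{\E\bigl[N^*_{\ps}(\{|z|>M\})\bigr]}{n} \;\le\; 1 - \frac{\E\bigl[N^*_{\ps}(\{|z|\le M\})\bigr]}{n} \;\longrightarrow\; 0,
\]
uniformly in $\ep$. Taking $\limsup_n$ in the decomposition above and then $\lim_{\ep\to 0}$ yields the conclusion for $\ps$; the same argument applied with Gaussian coefficients handles $\wps$. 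There is no real technical obstacle here: the whole proof is a one-step reduction to Corollary \ref{cor:explimit}, and the only thing to check, $\mu_\alpha(\partial_{\C}E_\ep)=0$, is automatic from the explicit form of the Ullman density.
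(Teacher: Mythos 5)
Your proof is correct and rests on the same key reduction as the paper: apply Corollary \ref{cor:explimit} to a compact complex thickening of the edge region whose boundary is $\mu_\alpha$-null, and use that $\mu_\alpha$ is absolutely continuous with $\supp\mu_\alpha=[-1,1]$, so the edge mass $\mu_\alpha([-1,-1+\ep]\cup[1-\ep,1])$ vanishes as $\ep\to 0$. The only structural difference is the treatment of roots far from $[-1,1]$: the paper works with a single family $F_\ep=([-1/\ep,\ep-1]\cup[1-\ep,1/\ep])\times[-\ep,\ep]$ whose real cutoff $1/\ep$ grows as $\ep\to 0$ and disposes of the remainder with a terse appeal to monotone convergence, whereas you fix $M=2$, handle the near-edge part with the rectangles $E_\ep$, and control $\R\setminus[-M,M]$ by the complementary count $N_n^*(\C)\le n$ together with $\mu_\alpha(\{|z|\le M\})=1$. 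Your version makes explicit the far-tail step that the paper's write-up leaves implicit, at no extra cost; otherwise the two arguments are identical in substance.
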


 Combining the last two theorems, we obtain that
 \begin{equation*}\label{key}
\lim _{n\to \infty} \left(\frac{\E[N_{\ps}(\R)]}{n} -\frac{\E[N_{\wps}(\R)]}{n}\right) = 0.
 \end{equation*}
 This proves the first part of Theorem \ref{thm:main}, in view of Theorem \ref{thm:gau:global}.

\section{Proof ideas and our contributions}

As mentioned in the previous section, the global structure of our paper is to follow the universality framework that has been developed in \cite{ nguyenvurandomfunction17, TVpoly}, and more recently in \cite{DOVorthonormal} for random orthogonal polynomials with compactly supported measures with a ``nice" density function. However, handling exponential weights is a much more complicated task that requires new tools and ideas.

Firstly, the functions $F_n$ whose universality property is of interest in previous papers were assumed to be analytic, which was important for several of their arguments such as the one using Hal\'asz's inequality. Unfortunately, our function $F_n(x) = P_n(x) W(x)$ is not analytic. We propose a novel way to circumvent this condition in Section \ref{sec:proof:uni}.

Secondly, the framework of universality requires to investigate the behavior of $F_n$ in an open neighborhood of the real line in the complex plane. That means we need to extend and study the behavior of the weight function $W$ on $\C$. This brings technical challenges that one did not have to deal with in the previous papers such as \cite{LPX2}.

Last but not least, one of the most challenging obstacles that we have to face is to show that the function $F_n(x)$ cannot be too small on any interval $I$ of length $1/n$ inside the core interval $(-a_n, a_n)$; this property is known as the anti-concentration property. If $F_n(x)$ were too small, the function would become sensitive to noise, and one would not expect universality. So, it is crucial to establish the anti-concentration. In particular, we want to show that with high probability, there exists $x\in I$ such that
$$|F_n(x)|\ge \exp(-n^{\ep})$$
for some small $\ep>0$. 

When $I$ is very near the edge of $(-a_n, a_n)$ (that is $|x| = \Theta(a_n)$), $W(x)$ can be as small as $\exp(-\Theta(n))$ and hence the above inequality becomes
\begin{equation}\label{eq:P:anti}
|P_n(x)|\ge \exp(\Theta(n^{1})).
\end{equation}
 
This is in sharp contrast to the case when the measure $\mu$ is compact in which one only needs to show
$$|P_n(x)|\ge \exp(-n^{\ep}).$$
The difficulty here is that with the power $\ep$ for $n$, one can restrict to lower degree polynomials $p_k$ for $k\ll n^{2\ep}$ and use classical tools like Remez inequality. This was one of the key ingredients in both \cite{nguyenvurandomfunction17} and \cite{DOVorthonormal}. Now, having the power 1 in \eqref{eq:P:anti}, one can no longer disregard the high degree polynomials and use Remez inequality as before. It boils down to controlling $p_k(x)$ for $k\approx n$ and $|x|\approx a_n$. This is known to have erratic behavior and it may be possible that $p_k$ is very tiny on the entire $I$ which is a significant obstruction.

In this paper, we come up with a completely new argument that incorporates the inverse Littlewood-Offord theory, the polynomial recurrence structure of $p_k$, and Remez inequality. The general idea is that if $P_n$ has a small anti-concentration probability, then $p_k$ must exhibit an algebraic structure. We then show that it is impossible for $p_k$ to have both algebraic structure and polynomial recurrence structure. We refer the details to Section \ref{sec:cond1}, where we also include a sketch of proof. We note that certain recurrence was also used in \cite[Section 13]{TVpoly} but our implementation here is completely orthogonal and different. 

Finally, we obtain the $k$-correlation function formulas (in Section \ref{sec:correlation}) by adapting the method of \cite{GKZ} with some minimal modifications toward orthogonal polynomials.

 \textbf{Organization.} To prove Lemma \ref{lm:conditions}, in Sections \ref{sec:cond2}--\ref{sec:cond1}, we verify Conditions (C1)--(C4). In Section \ref{sec:proof:local}, we prove Theorem \ref{thm:local:expectation}. Section \ref{sec:zerodistr} contains a proof of Theorem \ref{thm:zeroasympt}. Theorem \ref{thm:edge} is proven in Section \ref{sec:proof:edge}. We conclude with explicit formulas for $k$-correlation function in Section \ref{sec:correlation}.

\textbf{Notations.} We use standard asymptotic notations under the assumption that $n$ tends to infinity.  For two positive  sequences $(a_n)$ and $(b_n)$, we say that $a_n \gg b_n$ or $b_n \ll a_n$ if there exists a constant $C$ such that $b_n\le C a_n$. If $|c_n|\ll a_n$ for some sequence $(c_n)$, we also write $c_n\ll a_n$.

If $a_n\ll b_n\ll a_n$, we say that $b_n=\Theta(a_n)$.  If $\lim_{n\to \infty} \frac{a_n}{b_n} = 0$, we say that $a_n = o(b_n)$.  If $b_n\ll a_n$, we sometimes employ the notations $b_n = O(a_n)$ and $a_n = \Omega(b_n)$ to make the idea intuitively clearer or the writing less cumbersome; for example, if $A$ is the quantity of interest, we may write $A = A'+ O(B)$ instead of $A - A' \ll B$, and $A = e^{O(B)}$ instead of $\log A\ll B$. We also write that $a_n=O_C(b_n)$ if the implied constant depends on a given parameter $C$.

For the orthonormal polynomials $\{p_j(x)\}_{j=0}^{\infty}$, we define the {\bf reproducing kernel} by
\[
K_n(x,y)=\sum_{j=0}^{n}p_j(x)p_j(y),
\]
and the differentiated kernels by
\[
K_n^{(k,l)}(x,y)=\sum_{j=0}^{n}p_j^{(k)}(x)p_j^{(l)}(y),\quad k,l\in\N\cup\{0\}.
\]
It is apparent that the above Conditions (C1)--(C4) can be rephrased in term of reproducing kernels.

\section{Partial proof of Lemma \ref{lm:conditions}: Boundedness}\label{sec:cond2}
In this section, we prove Condition (C1) as the first step in proving Lemma \ref{lm:conditions}.
Let $$J_n(\ep) = a_n D(\ep) = [-(1-\ep)a_n, (1-\ep)a_n].$$
Since $P_n$ is spanned by the orthonormal basis $\{p_k\}_{k=0}^n$, we have that
\[
\|W P_n\|_{L^2(\R)} = \left(\int_{-\infty}^{\infty} |P_n(x)|^2 W^2(x)\,dx\right)^{1/2}= \left(\sum_{k=0}^n |\xi_k|^2\right)^{1/2}.
\]
Applying the Nikolskii-type inequality (cf. \cite[p. 60]{LPX2}), valid for any polynomial $Q_n$ with complex coefficients of degree at most $n$,
\[
\left\|W Q_n\right\|_{L^{\infty}(\R)}\leq C n\left\|W Q_n \right\|_{L^2(\R)},\quad n\in\N,
\]
we obtain that
\begin{align} \label{3.0}
\left\|W P_n\right\|_{L^{\infty}(\R)} \leq C n \left(\sum_{k=0}^n |\xi_k|^2\right)^{1/2},\quad n\in\N.
\end{align}
It follows from Lemma \ref{lem3.1} below (with $m=0$) that there is a constant $B>0$ such that for all
$n\in\N,\ s\in D(\ep)=[-1+\ep, 1-\ep],$ we have
\begin{align} \label{3.1}
W(a_n s) \sup_{z\in B_{\C} (s, 1/n)}|P_n(a_n z)| \le B  \left\|W P_n\right\|_{L^{\infty}(\R)} \le C n \left(\sum_{k=0}^n |\xi_k|^2\right)^{1/2}.
\end{align}
From Chebyshev's inequality for random coefficients with finite $(2+\ep_0)$-moments,
we obtain that $\bP(|\xi_k|>n^{(A+1)/2}) = O\left(n^{-(A+1)}\right)$ holds for any $A>0,\ k=0,1,\ldots,n.$ Hence
\[
\left(\sum_{k=0}^n |\xi_k|^2\right)^{1/2} \le n^{(A+1)/2}\sqrt{n+1}
\]
and
\[
W(a_n s) \sup_{z\in B_{\C} (s, 1/n)}|P_n(a_n z)| \le C n^{2+A/2},\quad n\in\N,\ s\in D(\ep),
\]
with probability at least $(1-cn^{-(A+1)})^{n+1}\ge 1-d/n^A,\ d>0$. Since
\[
\sup_{z\in B_{\C} (s, 1/n)} |W(a_n z)| \le C W(a_n s)
\]
by our choice of the extension for $W$ to the complex plane, we have that
\[
\sup_{z\in B_{\C} (s, 1/n)}|P_n(a_n z)W(a_n z)| \le \sup_{z\in B_{\C} (s, 1/n)}|P_n(a_n z)|  \sup_{z\in B_{\C} (s, 1/n)} |W(a_n z)| \le C n^{2+A/2}
\]
for all $n\in\N,\ s\in D(\ep),$ with probability at least $1-d/n^A,\ d>0$. This verifies the boundedness condition (C1) 
because $W(a_n z) P_n(a_n z) = F_n(z)$ and $n^{2+A/2} = o\left(\exp(c_1 n)\right)$ for all $c_1>0.$

We now state and prove the following lemma about growth of weighted polynomials and their derivatives in the complex plane. It was already used
in the above proof, namely in \eqref{3.1}, and will also be very useful in the other sections.

\begin{lemma} \label{lem3.1}
	For all $n\in\N,\ s\in D_n(\ep),$ and polynomials $R_n,\ \deg(R_n)\leq n,$ there is a constant $B>0$ such that
	\[
	W(a_n s) \sup_{z\in B_{\C} (s, 1/n)}|R_n^{(m)}(a_n z)| \le B \left(\frac{n}{a_n}\right)^m \sup_{t\in[-1,1]}|W(a_n t) R_n(a_n t)|, \quad m=0,1,2,\ldots.
	\]
\end{lemma}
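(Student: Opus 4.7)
The plan is to prove Lemma \ref{lem3.1} in two steps: first the base case $m=0$, then bootstrap to $m \ge 1$ using Cauchy's integral formula at scale $a_n/n$.

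For the case $m = 0$, the main input is the complex-extended weighted sup-norm bound
\begin{equation}\label{eq:BWbound-plan}
|W(w) R_n(w)| \le C_1 \|W R_n\|_{L^\infty(\Delta_n)}, \qquad w \in \C,
\end{equation}
where $W$ on the left denotes the complex extension \eqref{extension}. I would derive this by applying the maximum principle on $\C \setminus \Delta_n$ to $\log|R_n(z)| - Q(z)$: this function is subharmonic there (since $Q$ is harmonic in $\C \setminus \Delta_n$ by construction and $\log|R_n|$ is globally subharmonic), continuous up to $\Delta_n$, and bounded at infinity because the growth $Q(z) \sim n \log |z|$ cancels that of $\log|R_n|$ (we use here that $\int d\mu_n = n$). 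This is a standard consequence of the potential-theoretic framework developed in \cite[Chapter 4]{Lubinskybookexp}. Given \eqref{eq:BWbound-plan}, I would then show
\begin{equation}\label{eq:Wratio-plan}
\frac{W(a_n s)}{|W(a_n z)|} \le C_2 \qquad \text{for all } s \in D(\ep), \quad z \in B_\C(s, 2/n),
\end{equation}
for some $C_2 = C_2(\eps)$, equivalently $|Q(a_n z) - Q(a_n s)| = O_\eps(1)$. For $s \in D(\ep)$, the Mhaskar--Rakhmanov--Saff relation $a_n Q'(a_n) \asymp n$ together with the quasi-increasing property of $T$ in Definition \ref{def1.1} yields $|Q'(x)| \le C n/a_n$ uniformly for $x \in J_n(\ep/2)$, and the harmonic extension of $Q$ retains the same order of magnitude in a complex neighborhood. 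Hence $|Q(a_n z) - Q(a_n s)| \le |a_n z - a_n s|\cdot O(n/a_n) = O_\eps(1)$ for $|z - s| \le 2/n$. Combining \eqref{eq:BWbound-plan} and \eqref{eq:Wratio-plan} yields the $m=0$ case of the lemma, in fact for the enlarged disk of radius $2/n$ (as needed for the next step).

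For the case $m \ge 1$, fix $z \in B_\C(s, 1/n)$ and apply Cauchy's integral formula to the entire function $R_n$ on the circle $|w - a_n z| = a_n/n$:
\begin{equation*}
R_n^{(m)}(a_n z) = \frac{m!}{2\pi i} \oint_{|w - a_n z| = a_n/n} \frac{R_n(w)}{(w - a_n z)^{m+1}}\, dw,
\end{equation*}
so that $|R_n^{(m)}(a_n z)| \le m!(n/a_n)^m \sup_{|w - a_n z| = a_n/n}|R_n(w)|$. On this circle, writing $w = a_n \zeta$ gives $\zeta \in B_\C(z, 1/n) \subset B_\C(s, 2/n)$, to which the enlarged $m=0$ bound applies. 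Multiplying through by $W(a_n s)$ and taking the supremum over $z$,
\begin{equation*}
W(a_n s) \sup_{z \in B_\C(s, 1/n)} |R_n^{(m)}(a_n z)| \le m!\, C_1 C_2\, (n/a_n)^m \sup_{t \in [-1,1]}|W(a_n t)R_n(a_n t)|,
\end{equation*}
with the final constant $B := m!\, C_1 C_2$ (which is allowed to depend on $m$).

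The main obstacle is the first step, and in particular the ratio bound \eqref{eq:Wratio-plan}, which requires controlling $|Q'|$ uniformly on the interior of $\Delta_n$ and extending that control to a complex neighborhood. This is where the $\CF(C^2)$ hypotheses---especially the quasi-increasing behavior of $T$ and condition (e) on $Q''/|Q'|$---enter, typically via several auxiliary estimates from \cite{Lubinskybookexp}. The weighted Bernstein--Walsh bound \eqref{eq:BWbound-plan} is also non-trivial but is by now standard in that reference. The second step is then a routine application of Cauchy's formula.
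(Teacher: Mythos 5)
Your proposal is correct and follows essentially the same route as the paper: a Bernstein--Walsh type bound via the maximum principle applied to $\log|R_n|-Q$ (equivalently $\log|R_n|+U^{\mu_n}-M$), an $O(1)$ control of the oscillation of the extended $Q$ on disks of radius comparable to $a_n/n$ around points of $J_n(\ep)$, and then Cauchy's integral formula at scale $a_n/n$ to pass to derivatives. The only soft spot, which you correctly flag as the main obstacle, is the claim that the harmonic extension of $Q$ keeps the order $n/a_n$ off the real axis: the paper does not argue this via a gradient bound but instead invokes Lemma 5.10(a) of \cite{Lubinskybookexp} for the vertical increment $U^{\mu_n}(x+iy)-U^{\mu_n}(x)=O(1)$, handling horizontal increments separately through $|Q'|\le Ca_nQ'(a_n)/a_n=O(n/a_n)$, so your argument is complete once that cited estimate is supplied.
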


\begin{proof}[Proof of Lemma \ref{lem3.1}]
	We begin with a general estimate for the growth of an arbitrary polynomial $R_n,\ \deg(R_n)\leq n,$ in the complex plane in terms of its weighted supremum norm on $\Delta_n=[-a_n,a_n].$ Recall that the potential $U^{\mu_n}(x) := - \int_{-a_n}^{a_n} \log |x-t|\sigma_n(t)\, dt$ of the weighted equilibrium measure $\mu_n(x):=\sigma_n(x)\,dx$ satisfies the equilibrium equation from \eqref{wequil}
	\[
	U^{\mu_n}(x) + Q(x)= M, \quad x\in \Delta_n.
	\]
		It is known that $U^{\mu_n}(x)$ is superharmonic and continuous in $\C$, harmonic in $\C\setminus\Delta_n$, and behaves like $-n\log|x|$ near infinity. Therefore the function
	\[
	h(z) := \log|R_n(z)| - \log\|R_n W\|_{L^\infty(\Delta_n)} + U^{\mu_n}(z) - M
	\]
	is subharmonic in the domain $\overline{\C}\setminus\Delta_n$, and has the following boundary values
	\begin{align*}
	h(x) &= \log|R_n(x)| - \log\|R_n W\|_{L^\infty(\Delta_n)} - Q(x) \\
	&= \log|R_n(x) W(x)| - \log\|R_n W\|_{L^\infty(\Delta_n)} \le 0,\quad x\in\Delta_n.
	\end{align*}
	Appying the Maximum Principle for subharmonic functions, we conclude that $h(z) \le 0$ for all $z\in\overline{\C}\setminus\Delta_n$, which is equivalent to
	\begin{align}\label{5.4}
	|R_n(z)| \le \|R_n W\|_{L^\infty(\Delta_n)} \exp\left(M - U^{\mu_n}(z)\right),\quad z\in\C.
	\end{align}
	This upper bound allows us to estimate the growth of $R_n$, provided we have appropriate estimates on the potential $U^{\mu_n}(z).$ Required estimates follow from Lemma 5.10(a) of \cite[p. 130]{Lubinskybookexp}: There is a constant $C>0$ such that
	\begin{align}\label{5.5}
	U^{\mu_n}(x+iy) - U^{\mu_n}(x) = O(1)
	\end{align}
	holds uniformly for $n\ge n_0,\ x\in J_n(\ep),$ and $|y| \le C a_n/n.$ Furthermore, for any pair $x_1,x_2\in\Delta_n$ such that $|x_1 - x_2|\le c a_n/n$, we obtain that
\begin{equation}\label{eq:Q}
	|U^{\mu_n}(x_1) - U^{\mu_n}(x_2)| = |Q(x_1) - Q(x_2)| \le |x_1 - x_2|\,\max_{x\in\Delta_n} |Q'(x)| \le c \frac{a_n Q'(a_n)}{n} = O(1),
\end{equation}
	where we used that $a_n Q'(a_n) = O(n)$ by (3.7) and (3.11) of \cite{LPX2}. The latter estimate together with \eqref{5.5} implies that
	\[
	U^{\mu_n}(z) = U^{\mu_n}(x) + O(1) = -Q(x) + O(1),\quad z\in B_{\C} (x, c a_n/n),\ x\in J_n(\ep),
	\]
	where $c>0$ is independent of $n,x,z$. Hence \eqref{5.4} now gives
	\begin{align}\label{5.6}
	|R_n(z)| &\le C \|R_n W\|_{L^\infty(\Delta_n)}  \exp\left(M - U^{\mu_n}(x)\right) \\ &= C \frac{\|R_n W\|_{L^\infty(\Delta_n)}}{W(x)},\quad z\in B_{\C} (x, c a_n/n),\ x\in J_n(\ep). \nonumber
	\end{align}
	Differentiating Cauchy's integral formula, we deduce from \eqref{5.6} that
	\begin{align}\label{5.7}
	|R_n^{(m)}(z)| &= \left| \frac{m!}{2\pi i} \int_{|w-x|=2 a_n/n} \frac{R_n(w)\,dw}{(w-z)^{m+1}} \right| \\ \nonumber
	&\le C \left(\frac{n}{a_n}\right)^m \frac{\|R_n W\|_{L^\infty(\Delta_n)}}{W(x)},\quad z\in B_{\C} (x, a_n/n),\ x\in J_n(\ep).
	\end{align}
	The statement of Lemma \ref{lem3.1} now follows by changing variable $x=a_n s$, while passing from $\Delta_n$ to $[-1,1]$ and from $J_n(\ep)$ to $D_n(\ep)$.
\end{proof}

\section{Partial proof of Lemma \ref{lm:conditions}: Delocalization}\label{sec:cond3}
In this section, we prove Condition (C2) as the next step in proving Lemma \ref{lm:conditions}.

Recall that $K_n(x,x)=\sum_{j=0}^{n} p_j^{2}(x),$ so that $|K_n(x,x)| = K_n(x,x),\ x\in\R,$
and $|K_n(w,w)| \le \sum_{j=0}^{n} |p_j(w)|^{2},\ w\in\C.$ We also use the notation
$J_n(\ep) = a_n D(\ep) = [-(1-\ep)a_n, (1-\ep)a_n].$
In order to prove the delocalization condition, we write
\begin{align} \label{4.1}
\frac{|p_k(w)|}{\sqrt{\sum_{j=0}^{n} |p_j(w)|^{2}}} \le \frac{|p_k(w)|}{\sqrt{|K_n(w,w)|}} = \frac{|p_k(w)| W(x)}{\sqrt{K_n(x,x) W^2(x)}}
\left(\frac{K_n(x,x)}{|K_n(w,w)|}\right)^{1/2},
\end{align}
where we assume that $x\in J_n(\ep)$ and $z\in\C$. Our first goal is to find upper bounds for the orthonormal polynomials $p_k.$
Applying Theorem 1.18 of \cite[p. 22]{Lubinskybookexp}, we obtain the estimate
\begin{align*}
\sup_{t\in[-1,1]}|W(a_n t) p_k(a_n t)| \le \sup_{x\in\R}|W(x) p_k(x)| \le C k^{1/6} a_k^{-1/3} \left(T(a_k)/a_k\right)^{1/6},
\end{align*}
that holds uniformly for all $k\ge 1.$ Since $T$ is bounded as a continuous function on $\R$ with finite limit at $\infty$, cf. \eqref{1.3},
we have that
\begin{align} \label{4.3}
\sup_{t\in[-1,1]}|W(a_n t) p_k(a_n t)| \le \sup_{x\in\R}|W(x) p_k(x)| = O\left(k^{1/6} a_k^{-1/2}\right),\quad k\in\N.
\end{align}
Combining the above estimate with Lemma \ref{lem3.1} (for $m=0$), we arrive at
\begin{align} \label{4.4}
W(a_n s) \sup_{z\in B_{\C} (s, 1/n)}|p_k(a_n z)| = O\left(k^{1/6} a_k^{-1/2}\right),\quad k\in\N,
\end{align}
which holds uniformly for $s\in D(\ep).$

We now recall some facts about the reproducing kernel $K_n$. Theorem 1.25 of \cite[p. 26]{Lubinskybookexp} states that
\[
\lim_{n\to\infty} K_n(x,x) W^2(x) = \sigma_n(x),\quad x\in J_n(\ep),
\]
where convergence is uniform in $J_n(\ep)$. Further,  Lemma 5.1(a) of \cite[p. 87]{LL2} gives that uniformly for $x\in J_n(\ep)$,
\[
C_1 \frac{n}{a_n} \le \sigma_n(x) \le C_2 \frac{n}{a_n}.
\]
Hence we have that
\begin{equation}\label{eq:KnW}
C_1 \frac{n}{a_n} \le  K_n(x,x) W^2(x) \le C_2 \frac{n}{a_n},\quad x\in J_n(\ep),
\end{equation}
which implies (after changing variable $x=a_n s$) by \eqref{4.4} that
\begin{equation}\label{4.6}
\frac{|p_k(a_n z)| W(a_n s)}{\sqrt{K_n(a_n s,a_n s) W^2(a_n s)}} \le C \frac{k^{1/6}}{n^{1/2}} \left(\frac{a_n}{a_k}\right)^{1/2} ,\quad s\in D(\ep),\ z\in B_{\C} (s, 1/n),
\end{equation}
where the estimate holds uniformly in $z,s$ and $k\le n\in\N.$ Suppose first that $k \le n^\tau,\ \tau\in(0,1).$ Recall that $a_n$ is an increasing sequence of positive numbers, and that Lemma 3.5(c) of \cite[p. 72]{Lubinskybookexp} states for a constant $C>0$:
\begin{align} \label{eq:ana1}
1\leq \frac{a_n}{a_1}\leq C n^{1/\Lambda} \text{ for all } n\in\N,
\end{align}
where $\Lambda>1$ is from the Definition \ref{def1.1} of $W$. Hence \eqref{4.6} gives that
\begin{equation}\label{4.8}
\frac{|p_k(a_n z)| W(a_n s)}{\sqrt{K_n(a_n s,a_n s) W^2(a_n s)}} \le C n^{(1/\Lambda-1)/2+\tau/6},\quad s\in D(\ep),\ z\in B_{\C} (s, 1/n),
\end{equation}
uniformly for $k\le n^\tau.$ If $n^\tau \le k \le n$ then we use that $K_n(x,x) \ge K_k(x,x)$, together with \eqref{4.4} and \eqref{eq:KnW}, and write instead of \eqref{4.6} that
\begin{align}\label{4.9}
\frac{|p_k(a_n z)| W(a_n s)}{\sqrt{K_n(a_n s,a_n s) W^2(a_n s)}} &\le \frac{|p_k(a_n z)| W(a_n s)}{\sqrt{K_k(a_n s,a_n s) W^2(a_n s)}} \\
&\le C \frac{k^{1/6}}{k^{1/2}} =  C k^{-1/3} \le C n^{-\tau/3},\quad s\in D(\ep),\ z\in B_{\C} (s, 1/n). \nonumber
\end{align}
Selecting the value of $\tau>0$ sufficiently small, so that $b:=\min(\tau/3,(1-1/\Lambda)/2-\tau/6) > 0,$ we obtain from \eqref{4.8}-\eqref{4.9} that
\begin{equation}\label{4.10}
\frac{|p_k(a_n z)| W(a_n s)}{\sqrt{K_n(a_n s,a_n s) W^2(a_n s)}} \le C n^{-b},\quad s\in D(\ep),\ z\in B_{\C} (s, 1/n),
\end{equation}
holds uniformly for all $k\le n.$

Lemma 8.3(a) of \cite{LP} states that  uniformly for $u,v$ in compact subsets of the complex plane, and $x\in J_{n}\left( \varepsilon \right),$
we have as $n\rightarrow \infty ,$
\begin{equation} \label{4.7}
\lim_{n\rightarrow \infty }\frac{K_{n}\left( x+\frac{u}{\tilde{K}_{n}(x,x) },x+\frac{v}{\tilde{K}_{n}(x,x) }\right)} {K_{n}(x,x) }
e^{-\frac{Q'(x)}{\tilde{K}_{n}(x,x)}(u+v)}=\frac{\sin{\pi(v-u)}}{\pi(v-u)},
\end{equation}
where $\tilde{K}_{n}(x,x):=K_n(x,x) W^2(x).$ The original result \eqref{4.7} for the real parameters $u,v$ was established in Theorem 1.2 of \cite{LL3}.
Recall that
\[
|Q'(x)| \le C \frac{n}{a_n}
\]
uniformly in $x\in J_n(\ep),$ by Lemma 3.8(a) of \cite[p. 77]{Lubinskybookexp}. Combining the latter fact with \eqref{eq:KnW}, we obtain that
\begin{equation}\label{4.11}
0 < C_1 \le \frac{Q'(x)}{\tilde{K}_{n}(x,x)} \le C_2
\end{equation}
holds uniformly for $x\in J_n(\ep).$ Thus \eqref{4.7} now gives that
\[
\frac{K_n(x,x)}{|K_n(w,w)|} \le C
\]
uniformly for $x\in J_n(\ep)$ and $w\in B_{\C} (x,a_n/n)$. If we change variables $x=a_n s$ and $w=a_n z$ in the above estimate, and use the result
together with \eqref{4.10} in \eqref{4.1}, then we arrive at the estimate
\begin{equation}\label{eq:delocalization}
\frac{|p_k(a_n z)|}{\sqrt{\sum_{j=0}^{n} |p_j(a_n z)|^{2}}} \le C n^{-b},\quad k=0,1,\ldots,n,
\end{equation}
uniformly for $s\in D(\ep)$ and $z\in B_{\C} (s,1/n).$

\section{Partial proof of Lemma \ref{lm:conditions}: Derivative growth}\label{sec:cond4}
In this section, we prove Condition (C3) as the next step in proving Lemma \ref{lm:conditions}. Both inequalities of (C3) are obtained from the universality limit for the reproducing kernels stated in \eqref{4.7}. Note that the functions under the limit on the right of \eqref{4.7} are entire functions in each complex variable $u$ and $v$ that converge uniformly in $u$ and $v$ on compact subsets of $\C$, and also uniformly for $x\in J_n(\ep) = [-(1-\ep)a_n, (1-\ep)a_n].$ Hence the same uniform convergence  on compact subsets of $\C$ is preserved for all derivatives of these functions with respect to $u$ and $v$. Differentiating \eqref{4.7} and applying induction, we obtain that
\begin{equation} \label{7.1}
\lim_{n\rightarrow \infty } \frac{K_{n}^{(l,m)}\left( x+\frac{u}{\tilde{K}_{n}(x,x) },x+\frac{v}{\tilde{K}_{n}(x,x) }\right)}
{(\tilde{K}_{n}(x,x)^{l+m} K_{n}(x,x) } e^{-\frac{Q'(x)}{\tilde{K}_{n}(x,x)}(u+v)} = F_{l,m}(u,v,x),
\end{equation}
where $F_{l,m}(u,v,x)$ are entire in $u$ and $v$, and the convergence in uniform for $u$ and $v$ in compact subsets of $\C$, and
for $x\in J_n(\ep)$. It follows that each function $F_{l,m}(u,v,x)$ is uniformly bounded for $u$ and $v$ in compact subsets of $\C$, and
for $x\in J_n(\ep)$. Taking into account \eqref{4.11}, we obtain that
\[
\frac{K_{n}^{(l,m)}\left( x+\frac{u}{\tilde{K}_{n}(x,x) },x+\frac{v}{\tilde{K}_{n}(x,x) }\right)}
{(\tilde{K}_{n}(x,x))^{l+m} K_{n}(x,x) }
\]
are bounded uniformly for all $n\in\N$, $u$ and $v$ in compact subsets of $\C$, and $x\in J_n(\ep)$. Recall that  $\tilde{K}_{n}(x,x)=K_n(x,x) W^2(x)$ satisfies \eqref{eq:KnW}, which gives that
\begin{equation} \label{7.2}
\frac{K_{n}^{(l,m)}\left(x+\frac{u a_n}{n},x+\frac{v a_n}{n}\right)}{ K_{n}(x,x) } \le C_{l,m} \left(\frac{n}{a_n}\right)^{l+m},
\end{equation}
uniformly for all $n\in\N$, $u$ and $v$ in compact subsets of $\C$, and $x\in J_n(\ep)$.

To check the first inequality in condition (C3), we use \eqref{7.2} with $l=m=1$, $u=v=0$, and $x=a_n s$, where $s\in D(\ep)$, and estimate
\begin{align*}
\frac{\sum_{j=0}^{n} |p_j'(a_n s)|^{2} a_n^2}{\sum_{j=0}^{n} |p_j(a_n s)|^{2}} = a_n^2 \, \frac{ K_{n}^{(1,1)}\left(a_n s,a_n s\right)}{ K_{n}(a_n s,a_n s) } = O(n^2).
\end{align*}
Thus the first inequality in condition (C3) is true with $c_1=0$.

We now turn to the proof of the second derivative growth condition, where we apply \eqref{7.2} with $l=m=2$, $|u|<1$ and $|v|<1$, and $x=a_n s$, with $s\in D(\ep)$. It follows that
\[
\frac{\dis \sup_{z\in B_{\C} (s, 1/n)} \sum_{j=0}^{n} |p_j''(a_n z)|^2 a_n^4}{\sum_{j=0}^n |p_j(a_n s)|^2} =  a_n^4 \, \frac{\dis \sup_{z\in B_{\C} (s, 1/n)} K_n^{(2,2)}(a_n z,a_n \bar{z})}{K_n(a_n s,a_n s)} = O(n^4).
\]
We also used that the coefficients of the orthogonal polynomials $p_j$ are real, to express the numerator in the above equation via the kernel $K_n^{(2,2)}.$ This completes verification of the derivative growth conditions with $c_1=0$.

\section{Final piece of proof of Lemma \ref{lm:conditions}: Anti-concentration}\label{sec:cond1}
In this section, we prove Condition (C4) as the last step of proving Lemma \ref{lm:conditions}. In particular, we show that the minimum of $\ps$ in a local ball is bounded away from 0 with high probability.
The key result of this section is
\begin{lemma} [Anti-concentration]\label{cond-smallball}  For any positive constants $A, c$ and $c_1$,  the following holds: for every $x_0\in [-1+\ep, 1-\ep]$, with probability at least $1 - O(n^{-A})$, there exists $x_1\in [x_0-c/n, x_0+c/n]$ for which $|W(a_n x_1)\ps(x_1)|\ge \exp(-n^{c_1})$.
\end{lemma}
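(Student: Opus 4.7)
The plan is to reduce the statement to a pointwise small-ball estimate at $x_0$, and then attack that estimate using the inverse Littlewood-Offord theory (of Halasz and Nguyen-Vu type), the three-term recurrence for orthonormal polynomials, and the Remez inequality to handle the delicate edge regime.

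\textbf{Reduction to a point.} The event in the conclusion is the existence of some $x_1\in[x_0-c/n,x_0+c/n]$ with $|F_n(x_1)|\ge e^{-n^{c_1}}$; its complement is therefore contained in the single event $\{|F_n(x_0)|<e^{-n^{c_1}}\}$. Hence it suffices to prove the pointwise bound
$$\P\!\left(|F_n(x_0)|<e^{-n^{c_1}}\right)\ll n^{-A}\qquad(x_0\in D(\ep)),$$
where $F_n(x_0)=\sum_{k=0}^n \xi_k f_k(x_0)$ and $f_k(x)=p_k(a_nx)W(a_nx)$. By \eqref{eq:KnW} we have $\sum_k f_k(x_0)^2\asymp n/a_n$, and by the delocalization estimate \eqref{eq:delocalization} already proved as (C2), no coordinate $f_k(x_0)$ carries more than $Cn^{-b}$ of the $\ell^2$-mass. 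A standard truncation (replacing $\xi_k$ by $\xi_k\mathbf{1}_{|\xi_k|\le n^{C}}$ at probabilistic cost $O(n^{-A})$, using the uniform $(2+\ep_0)$-moment hypothesis) lets us assume the $\xi_k$ are bounded by $n^{C}$.

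\textbf{Inverse Littlewood-Offord and the structured set.} Suppose for contradiction that $\P(|F_n(x_0)|<e^{-n^{c_1}})\ge n^{-A}$. A continuous inverse Littlewood-Offord theorem of Nguyen-Vu type, adapted to our heavy-tailed (truncated) coefficients, then produces a subset $S\subseteq\{0,1,\dots,n\}$ of size $|S|\ge n-n^{1-\delta}$, a generalized arithmetic progression $\mathcal{P}\subset\R$ of rank $O_A(1)$ and cardinality $|\mathcal{P}|\le n^{O_A(1)}$, and a scale $\rho\gtrsim e^{-n^{c_1}}$, such that for every $k\in S$,
$$f_k(x_0)\in \rho\mathcal{P}+[-e^{-n^{c_1}},e^{-n^{c_1}}].$$
Thus, apart from $n^{1-\delta}$ exceptional indices, the values $\{f_k(x_0)\}$ are forced into a highly structured set of total size $n^{O_A(1)}e^{-n^{c_1}}$.

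\textbf{Contradiction via the three-term recurrence and Remez.} The orthonormal polynomials satisfy the classical three-term recurrence
$$t\,p_k(t)=\alpha_{k+1}p_{k+1}(t)+\beta_k p_k(t)+\alpha_k p_{k-1}(t),$$
whose coefficients satisfy $\alpha_k\asymp a_k$ and $\beta_k\to 0$ (for even $Q$) by the theory in \cite{Lubinskybookexp}. Multiplying by $W(a_nx_0)$ and specializing at $t=a_nx_0$ gives the corresponding recurrence for $f_k(x_0)$. Whenever two consecutive indices $k,k-1$ lie in $S$, the recurrence forces $f_{k+1}(x_0)$ into a GAP of the same rank but with cardinality enlarged by a constant factor coming from the multiplier $a_nx_0/\alpha_{k+1}$; iterating across $n^{\Omega(1)}$ consecutive indices blows the volume up to $e^{\Omega(n^{c})}$ for some $c>c_1$, contradicting the polynomial cardinality bound on $\mathcal{P}$. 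To guarantee that consecutive pairs in $S$ actually occur and that each recurrence step produces genuine inflation, we rule out anomalous smallness of $p_k(a_nx_0)$ for many $k$ by invoking the Remez inequality on $[-(1-\ep)a_n,(1-\ep)a_n]$: it prevents a polynomial of degree $\le n$ from being uniformly tiny on a sub-interval of width $\gtrsim a_n/n$ without being tiny on a much larger set, forcing enough of the values $p_k(a_nx_0)$ to be non-degenerate.

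\textbf{Main obstacle.} The heart of the argument, and its main novelty compared to \cite{DOVorthonormal,nguyenvurandomfunction17}, is the quantitative control of how the GAP produced by inverse Littlewood-Offord grows under one step of the three-term recurrence, \emph{uniformly in the edge regime} $|x_0|\to 1$, where $W(a_nx_0)$ is exponentially small and the values $p_k(a_nx_0)$ behave erratically. In the compactly supported setting one can restrict to low-degree polynomials $p_k$ with $k\ll n^{O(\ep)}$ and apply Remez directly; here the exponentially small scale $e^{-n^{c_1}}$ forces us to account for \emph{all} $k\le n$ simultaneously, so the combined use of Littlewood-Offord, the three-term recurrence, and Remez is essential and constitutes the principal technical difficulty of this section.
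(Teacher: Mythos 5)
Your opening ``reduction to a point'' is where the argument goes off the rails. It is logically sound that the pointwise bound $\P(|F_n(x_0)|<e^{-n^{c_1}})\ll n^{-A}$ would imply the lemma, but this is a strictly stronger statement than what is needed, and the freedom you discard --- the ability to choose $x_1$ anywhere in $[x_0-c/n,x_0+c/n]$ --- is exactly what the paper's proof lives on. The paper's contradiction hypothesis is that the small-ball probability is large at \emph{every} point of the interval; this gives GAP structure for the values $W(a_nx)p_i(a_nx)$ simultaneously for all $x$ in the interval, and a pigeonhole in $x$ (Lemma \ref{lm:n'}) then produces a measurable set $\CE$ of measure $\gg n^{-2}$ on which the rank and the integer coefficients of the GAP representation are \emph{constant}. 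All subsequent steps (the two-term degeneracy of Lemma \ref{lm:2deg}, the one-term degeneracy of Lemma \ref{lm:1deg}) are identities in the variable $x$, with rational-function coefficients of controlled degree and height, valid on the positive-measure set $\CE$; the final contradiction is that a low-degree polynomial whose leading coefficient is $\ge n^{-O(1)}$ cannot be $O(n^{O(1)}\beta^{1-\ep_1})$ on a set of measure $n^{-O(1)}$ (Remez, Lemma \ref{lm:S}). None of this survives once you freeze a single point $x_0$: any algebraic relation among the numbers $p_k(a_nx_0)$ can hold at one point without contradicting anything.

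Concretely, two steps of your sketch fail at the fixed point. First, your use of Remez ``on $[-(1-\ep)a_n,(1-\ep)a_n]$'' to force many $p_k(a_nx_0)$ to be non-degenerate proves nothing about the values at the specific point $x_0$: a degree-$n$ polynomial can vanish at $x_0$ while being large on the rest of the interval. (In the paper the analogous non-degeneracy is obtained for each $x$ from the kernel lower bound \eqref{eq:KnW} together with delocalization, and the common index $n'$ is extracted only on the positive-measure set $\CE$, not at a prescribed point.) Second, the claimed contradiction by ``volume blow-up'' of the GAP under the three-term recurrence is not a valid counting argument: the recurrence only says that each number $f_{k+1}(x_0)$ equals a real-scalar combination of $f_k(x_0)$ and $f_{k-1}(x_0)$, and there is no inconsistency between finitely many specific reals lying in a small GAP and their satisfying such relations --- no cardinality of any set is being forced to grow. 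The paper instead exploits the bounded rank via linear dependencies among the integer coefficient vectors $\Bk_i$, converts them through the jumped recurrence \eqref{eq:pmk:2} into polynomial identities in $x$, and only then gets a contradiction from Remez on $\CE$. So while your toolkit (inverse Littlewood--Offord, three-term recurrence, Remez) is the right one, the proposal as written has a genuine gap: the pointwise small-ball estimate at an arbitrary prescribed $x_0$ is not established, and the mechanism you propose for deriving a contradiction does not work at a single point.
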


We will prove an actually stronger result below
\begin{lemma}\label{lm:cond-smallball'}  For any positive constants $A, c$ and $c_1$,  the following holds: for every $x_0\in [-1+\ep, 1-\ep]$, there exists $x_1\in [x_0-c/n, x_0+c/n]$ such that
	$$\P\left(|W(a_n x_1)\ps(x_1)|\le \exp(-n^{c_1})\right) < n^{-A}.$$
\end{lemma}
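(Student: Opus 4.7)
The plan is a proof by contradiction combining an inverse Littlewood--Offord theorem, the three-term recurrence of $\{p_k\}$, and a weighted Remez inequality, as sketched in Section 3. Suppose, contrary to the claim, that at \emph{every} point $x_1$ of a fine net $\CN\subset [x_0-c/n,\,x_0+c/n]$ of spacing $n^{-D}$ (with $D=D(A,c_1,\Lambda)$ to be chosen large) one has $\bP(|W(a_n x_1)\ps(x_1)|\le \exp(-n^{c_1}))\ge n^{-A}$. Writing $F_n(x_1)=\sum_{k=0}^n \xi_k f_k(x_1)$ with $f_k(x_1)=p_k(a_n x_1)\,W(a_n x_1)$, this is an anti-concentration failure for a fixed deterministic coefficient vector at every point of $\CN$.

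First I would apply a Nguyen--Vu type inverse Littlewood--Offord theorem to $F_n(x_1)$ at each $x_1\in\CN$. After rescaling the coefficients by $e^{n^{c_1}}$ to bring the small-ball event to unit scale, the inverse theorem produces a generalized arithmetic progression $Q(x_1)$ of rank $r=O_A(1)$ and volume $n^{O_A(1)}$ such that, apart from at most $o(n)$ exceptional indices, each $f_k(x_1)$ lies within distance $\exp(-n^{c_1})$ of $e^{-n^{c_1}}Q(x_1)$. A pigeonhole over the finitely many combinatorial types of such GAPs then yields a subset $\CN'\subset\CN$ with $|\CN'|\ge |\CN|/n^{O_A(1)}\gg n$ (once $D$ is large enough) along which the rank and generator bounds of $Q(x_1)$ are independent of $x_1$. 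Combining this with the delocalization bound (Condition (C2)) and the normalization $\sum_k f_k(x_1)^2=K_n(a_n x_1,a_n x_1) W^2(a_n x_1)=\Theta(n/a_n)$ from \eqref{eq:KnW}, one obtains that the bulk of the $\ell^2$ mass of $(f_k(x_1))_k$ is captured by an $O_A(1)$-dimensional affine object for each $x_1\in\CN'$.

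Next I would exploit the classical three-term recurrence $\gamma_{k+1}\,p_{k+1}(y)=(y-\beta_k)\,p_k(y)-\gamma_k\,p_{k-1}(y)$, with coefficients $\gamma_k,\beta_k$ controlled by the asymptotics in \cite{Lubinskybookexp}. Since each $p_k(y)$ is thereby a polynomial in $y$ of degree exactly $k$ with explicit leading coefficient, the approximate inclusions $p_k(a_n x_1)\in e^{-n^{c_1}}Q(x_1)+O(e^{-n^{c_1}})$ holding simultaneously at more than $n$ values $x_1\in\CN'$ can be converted, via polynomial interpolation together with a weighted Remez-type inequality on $J_n(\ep)$ (as developed in \cite{Lubinskybookexp}), into near-identities among the $p_k$'s as polynomials on a macroscopic subinterval of $J_n(\ep)$. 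These near-identities force, after subtracting the $O_A(1)$-dimensional component captured by $Q$, the remaining component of $(p_k(y))_k$ to be exponentially small in $\ell^2$ uniformly on a set of positive measure in $J_n(\ep)$, which contradicts the matching lower bound $K_n(a_n x_1,a_n x_1)\,W^2(a_n x_1)\gtrsim n/a_n$ from \eqref{eq:KnW} once $n$ is sufficiently large.

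The main obstacle is the bookkeeping between two very different scales: the inverse Littlewood--Offord theorem produces an approximate structural statement at the scale $\exp(-n^{c_1})$, while iterating the three-term recurrence $n$ times and applying Remez on a window of length $\asymp 1/n$ both amplify errors substantially. Choosing $c_1$, the net spacing $n^{-D}$, the exceptional set in the Nguyen--Vu output, and the rank and volume bounds on $Q(x_1)$ consistently so that the structural information survives every step is the technical heart of the argument, and is precisely where the compactly supported treatment of \cite{DOVorthonormal,nguyenvurandomfunction17} fails and a new idea based on the polynomial recurrence is needed.
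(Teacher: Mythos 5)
You have assembled the right toolkit (inverse Littlewood--Offord, the three-term recurrence, Remez) and the same contradiction strategy as the paper, but the three steps that carry the actual weight do not work as you state them, and they are precisely where the paper's new idea lives. Your pigeonhole fixes only the rank and generator bounds of the GAP $Q(x_1)$ across the net; that is far too coarse to yield any algebraic relation, since the generators themselves still vary with $x_1$, and the data that does encode usable relations --- the integer coefficient matrix $(k_{ij})$ over all indices $i\le n$ --- has $n^{O(n)}$ possible values and cannot be pigeonholed globally. The paper's resolution (Lemma \ref{lm:n'}) is to use delocalization \eqref{eq:delocalization} and \eqref{eq:KnW} to locate a window $[n',n'+K]$ of $K=O(1)$ consecutive indices with $|W(a_nx)p_{n'}(a_nx)|\ge n^{-B}$, and to pigeonhole, over a subset $\CE$ of the interval of measure $\ge 2c/n^2$ (available because the contradiction hypothesis holds at \emph{every} point of $[x_0-c/n,x_0+c/n]$, not just on a net), only the $K+1$ coefficient vectors $\Bk_{n'},\dots,\Bk_{n'+K}\in\Z^{r'}$, which have just $n^{O(1)}$ possible values; since $K\gg r'$, linear dependencies among these few vectors give fixed rational relations such as \eqref{eqn:GAPid}. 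A positive-measure set $\CE$, rather than a measure-zero net, is also what the final Remez step needs.

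The remaining two steps of your outline would fail quantitatively and logically. Interpolating from $>n$ points in a window of length $O(1/n)$ to ``near-identities on a macroscopic subinterval of $J_n(\ep)$'' involves polynomials of degree up to $n$, so the interpolation/Remez amplification factor is $\exp(\Theta(n\log n))$, which obliterates the error scale $\exp(-n^{c_1})$ because the lemma must hold for arbitrarily small $c_1$; this is exactly the obstruction the paper flags (``one can no longer disregard the high degree polynomials and use Remez inequality as before''). The paper never extrapolates the high-degree $p_k$: the jumped recurrence (Lemma \ref{lm:jump3}) converts the fixed relations into two-term and then one-term degeneracies (Lemmas \ref{lm:2deg}, \ref{lm:1deg}) whose coefficients are polynomials of degree $O(1)$ with leading coefficients $\ge n^{-O(1)}$, and Remez (Lemma \ref{lm:S}) is applied only to these, whose sublevel sets $\{|S|\le\beta^{\ep_1}\}$ have measure $O(n^{O(1)}\beta^{\ep_1/d})$, negligible against $\lambda_{Leb}(\CE)\ge n^{-O(1)}$. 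Finally, your intended endgame --- contradicting $K_n(a_nx,a_nx)W^2(a_nx)\gtrsim n/a_n$ --- does not follow: GAP (additive, low-rank) structure of the values $(f_k(x_1))_k$ places no upper bound on their $\ell^2$ mass, since the entire vector can lie inside the structured part. The contradiction in the paper is of a different nature: the degeneracy, combined with $|W(a_nx)p_{n_3}(a_nx)|\ge n^{-B}$ on $\CE$, forces a fixed degree-$O(1)$ polynomial with non-negligible leading coefficient to be $\le n^{O(1)}\beta^{1-\ep_1}$ on a set of measure $n^{-O(1)}$, which Lemma \ref{lm:S} forbids. Without the $O(1)$-window reduction your sketch has no route to that kind of bounded-degree statement, so the proof as proposed does not close.
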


The rest of this section is devoted to the proof of this lemma. Before going into the detail, let us first provide a road map.

\begin{enumerate}
	\item Step 0: Assume by contradiction that Lemma \ref{lm:cond-smallball'} fails.
	\item Step 1: We recall the classical three-term recurrence for three consecutive $p_m$. From that, we can write $p_{m+k}$ in terms of $p_m$ and $p_{m-1}$ which we call a jumped three-term recurrence (see Lemma \ref{lm:jump3}).
	
	\item Step 2: From the inverse Littlewood-Offord theory, we know that for a given $x$, for the function $W(a_n x)\ps(x) = \sum_{i=0}^{n} \xi_i W(a_n x) p_i(a_nx)$ to satisfy
		$$\P(|W(a_n x)\ps(x)|\le \exp(-n^{c_1})) \ge n^{-A},$$
		it is necessary that the components $W(a_n x) p_i(a_n x)$ are close to a general arithmetic progression. Roughly speaking, this means that they have a low linear-dimension. See Lemma \ref{lm:n'}.
		
	\item Step 3: From Step 2, we know that there are many non-trivial linear combinations of the $(W(a_n x) p_i(a_n x))$ that are very small
	$$\sum_{j=1}^{O(1)} W(a_n x) p_{i_j}(a_nx) \approx 0 \text{ (which precisely means $  \exp(-n^{c})$-close to 0}).$$
	Using the three-term recurrence in Step 1 to write all of these $p_{i_j}$ as combination of $p_m$ and $p_{m-1}$, we obtain a so-called two-term degeneracy
	$$W(a_n x) p_m(a_n x) \approx \Gamma(x)  W(a_n x) p_{m-1}(a_n x)$$
	for some low-degree fractional polynomial $\Gamma$. See Lemma \ref{lm:2deg}.
	
	\item Step 4: Adapting the same strategy from Step 3 (using two-term degeneracy in place of three-term recurrence), we can reduce two-term degeneracy to (one-term) degeneracy. See Lemma \ref{lm:1deg}.
	
	\item Step 5: Show that degeneracy is impossible as it reduces to a polynomial of low degree and large leading term taking small value on a large interval.
	
\end{enumerate}

We shall now go into the details.
\subsection{Step 1: Jumped three-term recurrence}
The following lemma covers a classic fact concerning the three-term recurrence for $(p_m)$.
\begin{lemma}\label{lm:AmBm}  The polynomials $p_m$ satisfy a three-term recurrence relation
\begin{equation}\label{eqn:rec}
	xp_{m}=A_{m}p_{m+1}+B_{m}p_{m}+A_{m-1}p_{m-1}
\end{equation}
Moreover, there exists $r\in \left( 0,1\right) $ and $C_{1},C_{2}>0$ such that for $%
m\geq 1,$%
\begin{eqnarray*}
	C_{1} &\leq &A_{m}\leq C_{2}m^{r}; \\
	\left\vert B_{m}\right\vert  &\leq &C_{2}m^{r}.
\end{eqnarray*}%
\end{lemma}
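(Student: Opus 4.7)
\textbf{Proof plan for Lemma \ref{lm:AmBm}.}

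The three--term recurrence \eqref{eqn:rec} itself is an entirely standard consequence of the orthonormality of $\{p_m\}$ on the real line: since $xp_m$ has degree $m+1$, we may expand
\[
xp_m = \sum_{j=0}^{m+1} c_{m,j}\, p_j,
\qquad c_{m,j}=\int_{\R} x\, p_m(x)\,p_j(x)\,W^2(x)\,dx.
\]
By symmetry $c_{m,j}=c_{j,m}$, and orthogonality of $xp_j$ against $p_m$ whenever $\deg(xp_j)=j+1<m$ forces $c_{m,j}=0$ for $j<m-1$. Thus only three coefficients survive, and the usual symmetry identification gives $c_{m,m+1}=c_{m+1,m}=:A_m$ and $c_{m,m}=:B_m$; equivalently $A_m=\gamma_m/\gamma_{m+1}$ (ratio of leading coefficients of $p_m$ and $p_{m+1}$), which is automatically positive, and $B_m=\int x\,p_m^2\,W^2$. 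This establishes the form \eqref{eqn:rec}.

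The substantive part is the quantitative upper and lower bounds on $A_m$ and $B_m$. The plan is to invoke the recurrence--coefficient asymptotics for the class $\mathcal{F}(C^2)$ from Lubinsky's monograph \cite{Lubinskybookexp}: for $W=e^{-Q}\in\mathcal{F}(C^2)$ one has
\[
A_m \asymp a_m, \qquad |B_m| \lesssim a_m \qquad (m\ge 1),
\]
where $a_m$ is the Mhaskar--Rakhmanov--Saff number. (When $Q$ is even, which is our standing hypothesis via Theorem \ref{thm:main}, the measure is symmetric, so $B_m=0$ and only $A_m$ needs attention, but we state both for completeness.) These relations are essentially consequences of the identity $A_m=\gamma_m/\gamma_{m+1}$ combined with the sharp leading--coefficient asymptotics for orthonormal polynomials driven by the weighted equilibrium measure $\sigma_n$, and of Freud--type Christoffel estimates controlling $\int x p_m^2 W^2$.

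To finish, the upper bounds $A_m\le C_2 m^r$ and $|B_m|\le C_2 m^r$ are obtained by combining $A_m\asymp a_m$ and $|B_m|\lesssim a_m$ with \eqref{eq:ana1}, which gives $a_m \le C m^{1/\Lambda}$; thus $r:=1/\Lambda\in(0,1)$ works since $\Lambda>1$ by Definition~\ref{def1.1}(d). The lower bound $A_m\ge C_1$ follows from $A_m\asymp a_m$ together with the monotonicity $a_m\ge a_1>0$. No additional obstacle arises; the only nontrivial input is the recurrence--coefficient asymptotics from \cite{Lubinskybookexp}, which is precisely why the class $\mathcal{F}(C^2)$ is assumed throughout. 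The exponent $r=1/\Lambda$ thus matches the quasi--increasing parameter governing the growth of $a_m$, and this is essentially sharp.
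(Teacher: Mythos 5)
Your proposal is correct and follows essentially the same route as the paper: both rest on the known recurrence-coefficient asymptotics for $\mathcal{F}(C^2)$ weights from \cite{Lubinskybookexp} (the paper cites Theorem 15.2 there, giving $A_m\sim \delta_m/2$ and $B_m-\beta_m=o(\delta_m)$, which for even $Q$ is exactly your $A_m\asymp a_m$, $B_m$ controlled by $a_m$), combined with the polynomial growth of the Mhaskar--Rakhmanov--Saff numbers with exponent $1/\Lambda<1$ to take $r=1/\Lambda$. The only cosmetic difference is that the paper phrases the growth bound via $\delta_m$ (Lemma 3.4(c) of \cite{Lubinskybookexp}) rather than \eqref{eq:ana1}, and does not bother deriving the recurrence itself, which you do in the standard way.
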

For our purpose, we only need a weaker statement $m^{-C}\ll A_m \ll m^{C}\   \text{ and }\  |B_m|\ll m^{C}.$
\begin{proof}[Proof of Lemma \ref{lm:AmBm}]
We let $a_{\pm m}$ denote the Mhaskar-Rakhmanov-Saff numbers, so that
\begin{eqnarray*}
	m &=&\frac{1}{\pi }\int_{a_{-m}}^{a_{m}}\frac{xQ^{\prime }\left( x\right) }{%
		\sqrt{\left( x-a_{-m}\right) \left( a_{m}-x\right) }}dx; \\
	0 &=&\frac{1}{\pi }\int_{a_{-m}}^{a_{m}}\frac{Q^{\prime }\left( x\right) }{%
		\sqrt{\left( x-a_{-m}\right) \left( a_{m}-x\right) }}dx.
\end{eqnarray*}%
We let
\begin{eqnarray*}
	\delta _{m} &=&\frac{1}{2}\left( a_{m}+\left\vert a_{-m}\right\vert \right) ;
	\\
	\beta _{m} &=&\frac{1}{2}\left( a_{m}+a_{-m}\right) ,
\end{eqnarray*}%
so that
\[
\left[ a_{-m},a_{m}\right] =\left[ \beta _{m}-\delta _{m},\beta _{m}+\delta
_{m}\right] .
\]%
is the $m$th Mhaskar-Rakhmanov-Saff interval. \newline

 From \cite[Theorem 15.2, p. 402]{Lubinskybookexp}, which holds for a larger class of
weights,
\[
\lim_{m\rightarrow \infty }\frac{A_{m}}{\delta _{m}}=\frac{1}{2}\text{ and }%
\lim_{m\rightarrow \infty }\frac{B_{m}-\beta _{m}}{\delta _{m}}=0.
\]%
Note that by definition,  $\left\vert \beta _{m}\right\vert \leq \delta _{m}$%
. So all we need to prove is that there exists $r<1$ such that both
\[
\delta _{m}=O\left( m^{r}\right) \text{.}
\]%
This follows from \cite[Lemma 3.4(c), p. 72]{Lubinskybookexp}, where it is proved that
there exists $r_{0}$ such that for $t\geq r\geq r_{0},$%
\[
1\leq \frac{\delta _{t}}{\delta _{r}}\leq C\left( \frac{t}{r}\right)
^{1/\Lambda }.
\]%
Since $\Lambda >1$, we can take $r=1/\Lambda $.
\end{proof}

From the recurrence, we obtain that
\begin{equation*}\label{key}
 p_{m+1} = \left (\frac{1}{A_m} x+\dots\right )p_m + \frac{A_{m-1}}{A_m}p_{m-1},
\end{equation*}
\begin{eqnarray*}\label{key}
	p_{m+2} &=& \left (\frac{1}{A_{m+1}} x+\dots\right )p_{m+1} + \frac{A_{m-1}}{A_m}p_{m}\notag\\
	 &=& \left (\frac{1}{A_{m+1}A_{m}} x^2+\dots\right )p_{m} + \left (\frac{A_{m-1}}{A_{m+1}A_{m}} x+\dots\right )p_{m-1},
\end{eqnarray*}
and so, by the bounds from Lemma \ref{lm:AmBm}, we obtain the following ``jump" recurrence.
\begin{lemma}\label{lm:jump3}
For all $k, m\ge 1$,
 \begin{eqnarray}\label{eq:pmk:2}
 	p_{m+k}  	&=& \left (\frac{1}{\prod_{i=m}^{m+k-1}A_{i}} x^k+\dots\right )p_{m} + \left (\frac{A_{m-1}}{\prod_{i=m}^{m+k-1}A_{i}} x^{k-1}+\dots\right )p_{m-1},
 \end{eqnarray}
where in the dots are the lower order terms whose coefficients are $O(n^{O_k(1)})$.
\end{lemma}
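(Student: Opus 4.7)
The plan is to prove this by induction on $k$, using the three-term recurrence \eqref{eqn:rec} together with the magnitude bounds of Lemma \ref{lm:AmBm} as the only inputs. Writing
\[
p_{m+k} = f_k(x)\,p_m + g_k(x)\,p_{m-1},
\]
I aim to track the leading coefficients of $f_k$ and $g_k$ exactly and control the remaining coefficients in magnitude.

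For the base cases, $k=0$ is trivial ($f_0=1$, $g_0=0$), while dividing \eqref{eqn:rec} at index $m$ by $A_m$ gives
\[
p_{m+1} = \frac{x-B_m}{A_m}\,p_m \;-\; \frac{A_{m-1}}{A_m}\,p_{m-1},
\]
so the leading coefficients of $f_1$ and $g_1$ are $1/A_m$ and (up to sign) $A_{m-1}/A_m$, matching the claim at $k=1$.

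For the inductive step I would apply \eqref{eqn:rec} at index $m+k$ to write
\[
p_{m+k+1} = \frac{x-B_{m+k}}{A_{m+k}}\,p_{m+k} \;-\; \frac{A_{m+k-1}}{A_{m+k}}\,p_{m+k-1},
\]
and substitute the inductive representations at levels $k$ and $k-1$ to obtain the coupled scalar recurrences
\[
f_{k+1} = \frac{(x-B_{m+k})f_k - A_{m+k-1}\,f_{k-1}}{A_{m+k}}, \qquad g_{k+1} = \frac{(x-B_{m+k})g_k - A_{m+k-1}\,g_{k-1}}{A_{m+k}}.
\]
Reading off the top-degree contributions, the $x^{k+1}$-coefficient of $f_{k+1}$ is $1/\prod_{i=m}^{m+k}A_i$ and the $x^{k}$-coefficient of $g_{k+1}$ is $A_{m-1}/\prod_{i=m}^{m+k}A_i$, which is exactly the claimed leading structure. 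To bound the lower-order coefficients, I would invoke Lemma \ref{lm:AmBm} to estimate $A_i^{\pm 1}$ and $|B_i|$ by $n^{O(1)}$ uniformly for $i\le n$; each application of the recurrence multiplies existing polynomial coefficients by an affine factor $(x-B_{m+k})/A_{m+k}$ and adds a scalar multiple of a lower-degree polynomial, each operation inflating coefficient magnitudes by at most $n^{O(1)}$, so after $k$ iterations the non-leading coefficients are bounded by $n^{O_k(1)}$.

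There is no genuine obstacle here: the whole statement is bookkeeping for a two-step linear recurrence with polynomially bounded coefficients. The only small point of care is that this is a \emph{strong} induction, since the step from $k$ to $k+1$ consumes both the level-$k$ and the level-$(k-1)$ representations, which is why I include the trivial $k=0$ case alongside the explicit $k=1$ calculation as base cases.
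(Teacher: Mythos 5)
Your proposal is correct and is essentially the paper's own argument: the paper simply iterates the three-term recurrence \eqref{eqn:rec} (displaying the $k=1,2$ cases and asserting the pattern), tracking the leading coefficients and bounding the remaining ones via Lemma \ref{lm:AmBm}, which is exactly your induction made explicit. Your remark that the coefficient of $p_{m-1}$ carries a sign $(-A_{m-1}/\prod_i A_i)$ is a fair observation — the sign is immaterial for how the lemma is used — and otherwise your bookkeeping matches the paper's.
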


\subsection{Step 2: Large anti-concentration probability implies arithmetic structure}
Let $$\beta= \exp(-n^{c_1}).$$
 Let $x\in  [x_0-c/n, x_0+c/n]$ and assume otherwise that $\P(|W(a_n x)\ps(x)|\ge \beta) > n^{-A}$. Then by \cite[Theorem 2.9]{HNgV} (see also \cite{NgVsurv, TV-LO}), there exists an exceptional set $B_x \subset [n]$ of size at most  $n^{\delta/2}$ so that the set $\{|W(a_n x)p_i(a_n x), i\notin B_x\}$ are $\beta$-closed to a GAP of size $n^{O_A(1)}$ and rank at most $r'=O_A(1)$. In other words, there exist a set of $r'$ generators $g_1,\dots, g_{r'}$ (all depending on $x$) such that for all $i \notin B_x$, there exists integers $k_{i1},\dots, k_{ir'}$, all of order $n^{O_A(1)}$ satisfying
\begin{equation}\label{GAP:approx}
|W(a_n x)p_i(a_n x) - \sum_{j=1}^{r'} k_{ij} g_j| =O(\beta).
\end{equation}
	
In the following lemma, by passing to a subset, we can reduce to the case that the rank $r'$ and the coefficients $k_{ij}$ are the same. By the delocalization bound in Section \ref{sec:cond3}, we can also reduce to a subset of induces $n'$ at which $|W(a_n x)p_{n'}(a_n x)| $ is not too small.
 	
\begin{lemma}\label{lm:n'} There exist constants $B, \delta>0$ such that for any constant $K$, there exist a measurable set $\CE \subset [x_0-c/n, x_0+c/n]$ and an index $n^{\delta} < n' < n-n^{\delta}$ such that
	\begin{enumerate}
		\item $\lambda_{Leb}(\CE)>2c/n^2$;
		\vskip .1in
		\item $|W(a_n x)p_{n'}(a_n x)| \ge n^{-B}, x\in \CE$;
		\vskip .1in
		\item $[n', n'+K]\cap B_x=\emptyset, x\in \CE$.
		\vskip .1in
		\item The GAP ranks $r'$ are identical for all $x\in \CE$. Moreover, the coefficients $k_{ij}(x) = k_{ij}(y)$ for all $n'\le i\le n'+K$, $1\le j\le r'$, $x, y\in \CE$.
	\end{enumerate}
Here, $\lambda_{Leb}$ is the Lebesgue measure on $\R$.
\end{lemma}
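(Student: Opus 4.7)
The plan is to combine three ingredients pointwise in $x \in I := [x_0 - c/n, x_0 + c/n]$: a lower bound on the number of indices $j$ for which $w_j(x) := W(a_n x) p_j(a_n x)$ is not too small, the bound $|B_x| \le n^{\delta/2}$ on the exceptional set supplied by the inverse Littlewood--Offord theorem, and a pigeonhole over the finite parameter space of GAP data.

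First I would establish a quantitative count of ``large'' indices. The identity $\sum_j w_j(x)^2 = K_n(a_n x, a_n x) W^2(a_n x)$ together with \eqref{eq:KnW} gives $\sum_j w_j(x)^2 \ge C_1 n/a_n$, while the delocalization bound \eqref{eq:delocalization} forces $w_j(x)^2 \le C^2 n^{-2b} \sum_i w_i(x)^2$ uniformly in $j$. A reverse-Markov comparison then produces
\[
|G_x| \ge c\, n^{2b}, \qquad G_x := \{j : |w_j(x)| \ge n^{-B}\},
\]
provided $B > 1/(2\Lambda)$, using $a_n \le C n^{1/\Lambda}$ from \eqref{eq:ana1}. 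Next, since $|B_x| \le n^{\delta/2}$ and each element of $B_x$ blocks at most $K+1$ candidate values of $n'$ from the window condition $[n', n'+K] \cap B_x = \emptyset$, while at most $2 n^{\delta}$ further values are ruled out by the edge constraint $n^{\delta} < n' < n - n^{\delta}$, choosing $\delta < 2b$ ensures that
\[
H_x := \bigl\{n' \in G_x \cap (n^{\delta}, n - n^{\delta}) : [n', n'+K] \cap B_x = \emptyset\bigr\}
\]
has size $\Omega(n^{2b})$ and hence is non-empty. For each $x \in I$, select $n'(x) := \min H_x$ together with the rank $r'(x) \le r_0(A)$ and integer coefficients $(k_{ij}(x))_{n'(x) \le i \le n'(x)+K,\, 1 \le j \le r'(x)}$ given by \eqref{GAP:approx}.

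Second, I would apply pigeonhole to the tuple $T(x) := (r'(x), n'(x), (k_{ij}(x)))$. Because $r' \le r_0 = O_A(1)$, $n' \in [1,n]$, and each of the $(K+1) r_0$ integer coefficients satisfies $|k_{ij}| \le n^{O_A(1)}$, the range of $T(x)$ has cardinality at most $n^E$ for some $E = E(A,K)$. Using continuity of each $w_j$ in $x$ and a lexicographic tie-breaking rule, one can arrange $T$ to be a Borel function of $x$, so its level sets partition $I$ measurably. Pigeonhole then produces a class $\CE \subset I$ with $\lambda_{Leb}(\CE) \ge |I|/n^E$, giving the claimed polynomial lower bound (after absorbing constants into $c$). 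By construction $\CE$ and the common index $n'$ satisfy (1)--(4) simultaneously.

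The main obstacle is the quantitative count of $|G_x|$ in the first step: lifting the pointwise delocalization bound into a lower bound on the \emph{number} of indices where $|w_j|$ is not small is what guarantees a valid $n'$ survives after discarding both the exceptional window $[n', n'+K]$ hit by $B_x$ and the edge indices. The pigeonhole in the third step is routine once the GAP data is selected measurably, which is essential for producing $\CE$ of positive Lebesgue measure rather than just a non-empty set.
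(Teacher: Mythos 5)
Your argument is correct and is essentially the paper's own: delocalization \eqref{eq:delocalization} together with the kernel lower bound \eqref{eq:KnW} and $a_n\ll n^{1/\Lambda}$ yields $\gg n^{2b}$ indices with $|W(a_nx)p_j(a_nx)|\ge n^{-B}$, the $O(Kn^{\delta/2})$ window-blocked indices and the $O(n^{\delta})$ edge indices are then discarded, and a pigeonhole over the finitely many choices of index and GAP data gives the common $n'$, rank and coefficients on a measurable set of polynomial measure. The only (harmless) deviation is that you pigeonhole once over the whole tuple and so obtain $\lambda_{Leb}(\CE)\ge |I|/n^{O_{A,K}(1)}$ rather than the stated $2c/n^2$; this matches what the paper's own proof actually produces for the set on which item (4) holds (its $2c/n^2$ bound is only for the intermediate set $\CE'$), and only a lower bound of the form $n^{-O(1)}$ is used downstream.
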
	
	In this section we will choose $K$ sufficiently large compared to $r$ and $A$, but still $K=O(1)$.
	
	\begin{proof}
		We start with the second item. For $x\in   [x_0-c/n, x_0+c/n]$, by \eqref{eq:delocalization}, for some constant $b>0$ and for all $0\le i\le n$,
		$$\frac{W(a_n x) |p_i(a_n x)|}{W(a_n x)\sqrt{K_n(x, x)}} = \frac{W(a_n x) |p_i(a_n x)|}{W(a_n x)\sqrt{\sum_{j=0}^{n} p_j^{2}(a_n x)}} \le C n^{-b}.
		$$
		Thus, for $\delta = b/2$, the contribution of $i\in [n^{\delta},  n-n^{\delta}]$ is dominant
			By \eqref{eq:KnW} and \eqref{eq:ana1}, for some constant $\Lambda>1$,
		$$K_n(x, x) W^2(a_n x)\gg \frac{n}{a_n}\gg \frac{n}{n^{1/\Lambda}} = n^{1-1/\Lambda}.$$
		Thus,
		$$\sum_{i=n^{\delta}}^{n-n^{\delta}} W(a_n x) |p_i(a_n x)|\ge \frac12.$$
		
	So, there exists $B>0$ sufficiently large so that for each $x$, the set $N_x$ of indices $i$ where $n^{\delta} < i < n-n^{\delta}$ and $|W(a_n x)p_i(a_n x)| > n^{-B}$ satisfies
		$$|N_x| \ge n^{\delta}.$$
		
		For each $x\in  [x_0-c/n, x_0+c/n]$, because the set of $i$ for which $\{i,\dots, i+K\}\cap B_x \neq \emptyset$ has size at most $2K n^{\delta/2}$, and because $|N_x| \ge n^{\delta}$, there exists $i \in N_x$ for which  $\{i,\dots, i+K\}\cap B_x = \emptyset$. For each such $x$, we fix such an index $i=i(x)$ (note that by continuity, for $x$ very close to $x'$ we can choose $i(x)=i(x')$). When $x$ varies over $[x_0-c/n, x_0+c/n]$, by pigeonhole principle, there exists a measurable set $\CE'$ of measure $\lambda_{Leb}(\CE') \ge (2c/n)/n$ where $i(x)$ takes the same value for all $x\in \CE'$. Set $n'$ to be this common value, we complete the proof.

	Recall that by the application of \cite[Theorem 2.9]{HNgV} above, we can identify $W(a_n x)p_{i}(a_n x), i \in \{n', n'+1, \dots, n'+K\}$ with a vector $\Bk_i=\Bk_i(x) = (k_{i1},\dots, k_{i r'})\in \mathbb Z^{r'}$ with $|k_{i r'}| \le n^{O_A(1)}$ and $r'=O(1)$. More importantly, as there are only $n^{O_A(1)}$ such choices of vectors for $n' \le i\le n'+K$, by passing to a subset $\CE \subset \CE'$, still of measure $n^{-\Omega_{K,A}(1)}$, we can assume that these vectors $\Bk_i(x)$ are all the same for all $x\in \CE$.
\end{proof}
	
In the next step, we show that there are many $n''$ in the range $n' \le n'' \le n'+K$ such that $|W(a_n x)p_{n''}(a_n x)| \ge n^{-B}, x\in \CE$.

 \begin{claim}\label{claim:non-degenerate} Let $B'$ be sufficiently large depending on $B$ and $K$. Then for all $0\le i\le K$, we must have either $|W(a_n x)p_{n'+i}(a_n x)| \ge n^{-B'}$ or  $|W(a_n x)p_{n'+i+1}(a_n x)| \ge n^{-B'}$ for each $x\in \CE$.
 \end{claim}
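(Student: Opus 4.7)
The plan is a straightforward contradiction argument via the three-term recurrence \eqref{eqn:rec}. Suppose, toward contradiction, that for some $0\le i\le K$ and some $x\in\CE$, both
\[
|W(a_n x)p_{n'+i}(a_n x)| < n^{-B'}\quad\text{and}\quad |W(a_n x)p_{n'+i+1}(a_n x)| < n^{-B'}.
\]
Solving \eqref{eqn:rec} for $p_{m-1}$ gives $A_{m-1}p_{m-1}(y) = (y - B_m)p_m(y) - A_m p_{m+1}(y)$, which evaluated at $y=a_n x$ and multiplied through by $W(a_n x)$ yields
\[
A_{m-1}\,[W(a_n x)p_{m-1}(a_n x)] = (a_n x - B_m)\,[W(a_n x)p_m(a_n x)] - A_m\,[W(a_n x)p_{m+1}(a_n x)].
\]

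The key estimate is that the transfer from $(f_m,f_{m+1})$ to $f_{m-1}$, where $f_k := W(a_n x)p_k(a_n x)$, loses only a sub-linear factor in $n$. By Lemma \ref{lm:AmBm} there exists $r\in(0,1)$ with $A_{m-1}\ge C_1>0$, $A_m\le C_2 m^r$, and $|B_m|\le C_2 m^r$; and by \eqref{eq:ana1} one has $|a_n|\ll n^{1/\Lambda}$ with $\Lambda>1$. Setting $\gamma := \max(r, 1/\Lambda) < 1$, these bounds combine to give
\[
|f_{m-1}| \ll n^{\gamma}\bigl(|f_m| + |f_{m+1}|\bigr)
\]
uniformly for $m\le n$ and $x\in[x_0-c/n, x_0+c/n]$.

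Iterating this inequality downwards $i$ times, starting from $|f_{n'+i}|, |f_{n'+i+1}| < n^{-B'}$, one obtains
\[
\max\bigl(|f_{n'}|, |f_{n'+1}|\bigr) \le (C\,n^{\gamma})^{i}\,n^{-B'} \le (C\,n^{\gamma})^{K}\,n^{-B'}.
\]
Since by Lemma \ref{lm:n'} we have $|f_{n'}| = |W(a_n x)p_{n'}(a_n x)|\ge n^{-B}$, choosing $B'$ large enough so that $K\gamma + O(1) - B' < -B$ (which is possible as $\gamma < 1$ and $K=O(1)$) produces the desired contradiction. The argument involves no analytic subtlety; the only thing to track is that the constants $\gamma<1$ and $K=O(1)$ are both independent of $B'$, so that $B'$ can absorb the accumulated multiplicative factor $(C n^{\gamma})^{K}$. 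The main conceptual content is simply that two consecutive vanishings of a sequence satisfying a non-degenerate three-term recurrence force the entire sequence (on that side) to vanish — a discrete analogue of an ODE uniqueness statement — and this is incompatible with the non-triviality of $f_{n'}$ secured in the previous lemma.
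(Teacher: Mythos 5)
Your proof is correct and is essentially the paper's own argument: the paper's proof is a one-sentence sketch ("assume otherwise, use the recurrence \eqref{eqn:rec} to go backward and the lower bound for $A_m$ from Lemma \ref{lm:AmBm} to force $|W(a_n x)p_{n'}(a_n x)|\le n^{-B}$, a contradiction"), and you have simply filled in the quantitative bookkeeping, correctly noting that only $A_{m-1}\ge C_1$, the polynomial upper bounds on $A_m, B_m, a_n$, and $K=O(1)$ are needed so that $B'$ can absorb the accumulated factor.
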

\begin{proof} Assume otherwise, we can use the recurrence \eqref{eqn:rec} to go backward and use the lower bound for $A_m$ from Lemma \ref{lm:AmBm} to arrive at $|W(a_n x)p_{n'}(a_n x)| \le n^{-B}$, a contradiction.
\end{proof}

\subsection{Step 3: Arithmetic structure  and jumped three-term recurrence imply 2-term degeneracy}
The following lemma shows the so-called 2-term degeneracy of the form
$$W(a_n x) p_{n_0+1}(a_n x) \approx  W(a_n x) p_{n_0} (a_n x) $$
for a large number of choices for $n_0$.
\begin{lemma}\label{lm:2deg} Let $K$ be an arbitrarily large constant and $\ep_1$ be a sufficiently small constant. Let $n'$ be as in Lemma \ref{lm:n'}. There exists a sub-interval $[n_1, n_2]\subset [n', n'+K]$ of length $n_2-n_1\gg \log K$ such that for all $n_0\in [n_1, n_2]$, we have
	\begin{eqnarray}
	W(a_n x) p_{n_0+1}(a_n x)&=&- \frac{\frac{A_{n_0}}{\prod_{i=n_0+1}^{n_0+r-1}A_{i}} x^{r-2}+\dots  }{  \frac{1}{\prod_{i=n_0+1}^{n_0+r-1}A_{i}} x^{r-1}+\dots }  W(a_n x) p_{n_0} (a_n x)   +O(n^{O(1)} \beta^{1-\ep_1}), \quad x\in \CE. \notag
	\end{eqnarray}
\end{lemma}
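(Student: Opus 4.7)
The plan is to combine the arithmetic structure from Lemma \ref{lm:n'} with the jumped three-term recurrence of Lemma \ref{lm:jump3}. Lemma \ref{lm:n'} tells us that on $\CE$ the numbers $W(a_nx)p_i(a_nx)$ for $i\in[n',n'+K]$ are $O(\beta)$-close to integer combinations of $r'=O(1)$ generators $g_\ell(x)$, via an integer vector $\Bk_i\in\mathbb{Z}^{r'}$ that is constant in $x$. Since the dimensions of the spans $\operatorname{span}\{\Bk_{n'},\ldots,\Bk_{n'+m}\}$ form a non-decreasing integer-valued sequence bounded by $r'$, it stabilizes after at most $r'-1$ jumps, so some sub-window of length $\gg K/r'$ has constant span; inside it, every block of $r+1\le r'+1$ consecutive vectors is $\mathbb{Q}$-linearly dependent. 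A pigeonhole over the (bounded) list of possible dependence coefficients then produces integers $c_0,\ldots,c_r$ with $c_r\ne 0$ and $|c_j|\le n^{O(1)}$, together with a sub-interval $[n_1,n_2]$ of length $\gg \log K$, such that $\sum_{j=0}^{r} c_j \Bk_{n_0+j}=0$ simultaneously for every $n_0\in[n_1,n_2]$. Translated through the GAP approximation, this yields
\[
\sum_{j=0}^{r} c_j W(a_nx) p_{n_0+j}(a_nx) = O(n^{O(1)}\beta), \qquad x\in\CE,\ n_0\in[n_1,n_2].
\]

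Next, I would invoke Lemma \ref{lm:jump3} with base index $n_0+1$ and jump length $j-1$: for each $j\ge 1$,
\[
p_{n_0+j}(y) = \hat{P}_{j-1}(y)\, p_{n_0+1}(y) + \hat{Q}_{j-1}(y)\, p_{n_0}(y),
\]
where $\hat{P}_{j-1}$ has degree $j-1$ with leading coefficient $1/\prod_{i=n_0+1}^{n_0+j-1}A_i$, $\hat{Q}_{j-1}$ has degree $j-2$ with leading coefficient $A_{n_0}/\prod_{i=n_0+1}^{n_0+j-1}A_i$, and the remaining coefficients are $n^{O(1)}$. Normalizing $c_r=1$ and substituting, the relation above becomes
\[
\mathcal{A}(a_nx)\,W(a_nx) p_{n_0+1}(a_nx) + \mathcal{B}(a_nx)\,W(a_nx) p_{n_0}(a_nx) = O(n^{O(1)}\beta),
\]
with $\mathcal{A}:=\sum_{j=1}^{r} c_j \hat{P}_{j-1}$ and $\mathcal{B}:=c_0+\sum_{j=1}^{r} c_j \hat{Q}_{j-1}$; because only $j=r$ contributes to the top degree in each sum, the leading monomials of $\mathcal{A}$ and $\mathcal{B}$ are exactly the $x^{r-1}$ and $x^{r-2}$ terms displayed in the statement of the lemma. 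Solving for $W(a_nx)p_{n_0+1}(a_nx)$ introduces error $O(n^{O(1)}\beta/|\mathcal{A}(a_nx)|)$. Since $\mathcal{A}(a_nx)$ is a polynomial in $a_nx$ of degree $r-1=O(1)$ with leading coefficient bounded below by $n^{-O(1)}$ (using the lower bound $A_i\ge C_1$ from Lemma \ref{lm:AmBm}), a standard root-clustering estimate shows that the set $\{x\in\CE:|\mathcal{A}(a_nx)|<\beta^{\ep_1}\}$ has Lebesgue measure at most $n^{O(1)}\beta^{\ep_1/(r-1)}$, which is exponentially smaller than $\lambda_{Leb}(\CE)\gg n^{-2}$. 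Discarding this exceptional set upgrades the error to $O(n^{O(1)}\beta^{1-\ep_1})$ and yields the claimed identity.

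The main obstacle is the first step: the inverse Littlewood--Offord input supplies an $x$-independent integer vector $\Bk_i$ for each single $i$, but a priori nothing ties the vectors for different $i$'s together. Getting a single linear dependence $\sum c_j \Bk_{n_0+j}=0$ for one $n_0$ is elementary from $r'=O(1)$, but insisting that the \emph{same} tuple of coefficients $c_0,\ldots,c_r$ witness a dependence simultaneously for a long run of consecutive shifts $n_0\in[n_1,n_2]$ is what forces the rank-stabilization combined with a second pigeonhole on the dependence coefficients, and is precisely what accounts for the shrinkage from $K$ to $\log K$. Once that combinatorial step is in place, the rest of the argument is soft: the jumped recurrence, the coefficient book-keeping, and the denominator control are all straightforward consequences of the quantitative bounds $C_1\le A_m\le C_2 m^r$ and $|B_m|\le C_2 m^r$ from Lemma \ref{lm:AmBm}.
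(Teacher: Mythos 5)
Your second half is fine and matches the paper: once you have, for each $n_0$ in a subinterval, a relation of the form $W(a_nx)p_{n_0+r}(a_nx)=\sum_{1\le i\le r-1}c_i\,W(a_nx)p_{n_0+i}(a_nx)+O(n^{O(1)}\beta)$ on $\CE$, the jumped recurrence of Lemma \ref{lm:jump3}, the observation that only the top index contributes to the degrees $r-1$ and $r-2$, and the Remez-type Lemma \ref{lm:S} to excise the set where the denominator polynomial is below $\beta^{\ep_1}$ give exactly the stated two-term degeneracy. The genuine gap is in the combinatorial step that is supposed to produce such a relation. First, a dependence among $r'+1$ consecutive vectors in $\R^{r'}$ always exists, but nothing forces the coefficient of the \emph{top} vector to be nonzero: with $r'=2$ and $\Bk_{n_0}=(1,0)$, $\Bk_{n_0+1}=(2,0)$, $\Bk_{n_0+2}=(0,1)$, every dependence has zero coefficient on $\Bk_{n_0+2}$. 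Constant dimension of the prefix spans $\langle\Bk_{n'},\dots,\Bk_{n'+m}\rangle$ only tells you that each new vector lies in the span of \emph{all} earlier ones (up to $K$ of them), not in the span of the previous $r\le r'$ consecutive vectors, so your claim ``$c_r\ne 0$'' is unjustified — and without it the leading coefficient of $\mathcal{A}$ is not bounded below and the division step collapses.

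Second, your insistence on a single fixed tuple $(c_0,\dots,c_r)$ valid simultaneously for all $n_0$ in an interval is both unattainable and unnecessary. Unattainable: already for $r'=1$, if the integers $k_i$ are, say, distinct small primes, then $k_{n_0+1}/k_{n_0}$ changes with $n_0$ and no bounded fixed-coefficient recurrence with nonzero top coefficient can hold across even two consecutive shifts; moreover the pigeonhole you invoke runs over $n^{O(1)}$ candidate coefficient tuples while there are only $K=O(1)$ shifts, and even with boundedly many classes it would yield a sparse subset of shifts, not a subinterval of length $\gg\log K$. Unnecessary: the paper lets the coefficients $c_{ik}$, hence the polynomial $Q_{n_0}$, depend on $n_0$, and disposes of this in the Remez step by a union bound over the $n^{O(1)}$ possible polynomials, which is harmless because $\beta=\exp(-n^{c_1})$ beats any polynomial factor. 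The missing idea that repairs your first step is the paper's dyadic-window rank stabilization: looking at nested windows centered at $n'+K/2$ of radii $2^{s_0+s}$, one finds a scale $s_1$ at which doubling the window does not increase $\dim H$; then, with $r=3\cdot 2^{s_0+s_1}$ (note $r$ may be of order $\sqrt K$, not $\le r'$), for every $n_0$ in the left block $[n_1,n_2]$ (of length $2^{s_0+s_1}\gg\log K$) the vector $\Bk_{n_0+r}$ lies in the stabilized span and hence in $\langle\Bk_{n_0+1},\dots,\Bk_{n_0+r-1}\rangle$, which is precisely what normalizes the top coefficient to $1$ and produces the nondegenerate leading term $x^{r-1}/\prod_{i=n_0+1}^{n_0+r-1}A_i$.
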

Here, we note that the set $\CE$ may be a subset of the $\CE$ in Lemma \ref{lm:n'} but it always has a Lebesgue measure of $\Omega(n^{-2})$.

\begin{proof}[Proof of Lemma \ref{lm:2deg}]
We consider the vectors $\Bk_{n'}, \dots, \Bk_{n'+K}$. For each $n'\le i\le j\le n'+K$, we let
$$G_{[i,j]} = \{ \Bk_{i},\dots, \Bk_{j}\}$$
and $H_{[i,j]}$ be the subspace (over $\R$) spanned by the vectors in $G_{[ij]}$. Set
$$s_0 = \lfloor (\log_2 (K/2))/2 \rfloor.$$
For $0\le s \le s_0$, consider the nested subsets  $G_{[n'+K/2- 2^{s_0+s}, n'+K/2+ 2^{s_0+s}]}$. As the dimensions $\dim(H_{[n'+K/2- 2^{s_0+s}, n'+K/2+ 2^{s_0+s}]})$ are non-decreasing in $s$, and all bounded by $r'$, by pigeon-hole principle, provided that $K$ is sufficiently large given $r'$, there exists $s_1 \le  s_0 -1$ such that
\begin{equation}\label{eqn:rankeq}
\dim(H_{[n'+K/2- 2^{s_0+s_1}, n'+K/2+ 2^{s_0+s_1}]})= \dim(H_{[n'+K/2- 2^{s_0+s_1+1}, n'+K/2+ 2^{s_0+s_1+1}]}).
\end{equation}
Let $n_1=n'+K/2 -2^{s_0+s_1+1}, n_1' = n'+K/2+ 2^{s_0+s_1+1}$, $n_2=n'+K/2 -2^{s_0+s_1}$, and $n_2'=n'+K/2+ 2^{s_0+s_1}$ be the endpoints.

\begin{figure}[h!]\centering
	
	\begin{tikzpicture}
	
	\draw  (0,2) node{(} node[above]{$n_1$}   -- (2, 2)node[fill,circle,scale=0.3]{}node[above]{$n_0$}  -- (3,2) node{[}node[above]{$n_2$}  -- (6,2) node[fill,circle,scale=0.3]{} node[above]{$n'+K/2$} -- (9,2) node{]}node[above]{$n_2'$} --(11,2) node[fill,circle,scale=0.3]{}node[above]{$n_0+r$}-- (12,2) node{)}node[above]{$n_1'$};
	\draw [-stealth] (6,1.0) --  (7.5,1.0)  node[above]{$2^{s_0+s_1}$} -- (9,1.0);
	\end{tikzpicture}
	
	\label{figure:swap}
	
\end{figure}

Hence the subspaces $H_{[i,j]}$ where $n_1 \le i \le n_2$ and $n'_2\le j \le n'_1$  are all the same.
In what follows, we set
$$r= 3 \times 2^{s_0+s_1}.$$
Choose $n_0$ be any index in $[n_1, n_2]$. In this range, by \eqref{eqn:rankeq} we have
$$\Bk_{n_0+r} \in \langle \Bk_{n_0+i}, 1\le i\le r-1 \rangle,$$
the linear vector space generated by $\Bk_{n_0+i}$ over $\R$.

So we have from \eqref{GAP:approx} that
\begin{equation}\label{eqn:GAPid}
W(a_n x) p_{n_0+r}(a_n x) = \sum_{1\le i\le r-1} c_{ik} W(a_n x) p_{n_0+i}(a_n x) + O(n^{O(1)}\beta), x\in \CE
\end{equation}
where $c_{ik}$ are rational of heights bounded by $n^{O(1)}$.

By the jump three-term recurrence \eqref{eq:pmk:2}, we have
$$p_{n_0+r} =\left (\frac{1}{\prod_{i=n_0+1}^{n_0+r-1}A_{i}} x^{r-1}+\dots\right )p_{n_0+1} + \left (\frac{A_{n_0}}{\prod_{i=n_0+1}^{n_0+r-1}A_{i}} x^{r-2}+\dots\right )  p_{n_0}$$
and
$$\sum_{1\le i\le r-1} c_{ik}  p_{n_0+i}  = (\text{polynomial of degree }\le r-2)p_{n_0+1} + (\text{polynomial of degree }\le r-3) p_{n_0}.$$
Hence,
   $$\left (\frac{1}{\prod_{i=n_0+1}^{n_0+r-1}A_{i}} x^{r-1}+\dots\right ) W(a_n x) p_{n_0+1}(a_n x)+{ \left (\frac{A_{n_0}}{\prod_{i=n_0+1}^{n_0+r-1}A_{i}} x^{r-2}+\dots\right ) }  W(a_n x) p_{n_0} (a_n x)$$
\begin{equation}
 \qquad =O(n^{O(1)}\beta), \quad x\in \CE.\label{eq:n0:to}
\end{equation}
Let us denote by $Q_{n_0}(x)$ the polynomial in front of $W(a_n x) p_{n_0+1}(a_n x)$. Note that $Q_{n_0}$ is just a combination (with at most $r=O(1)$ terms and rational coefficients $c_{ik}$ of heights bounded by $n^{O(1)}$) of fixed polynomials appeared in \eqref{eq:pmk:2} (which, given $\mu$, consists of $\le n^2$ polynomials as there are $\le n^2$ choices for $m$ and $k$ there). So, the number of possible choice for $Q_{n_0}$ (as $n_0, r, c_{ik}$ vary) is at most $n^{O(1)}$.

The following lemma controls the set of points at which $Q_{n_0}$ is small. We defer the simple proof (by relying on Remez inequality) to the end of this subsection.
\begin{lemma}\label{lm:S} Let $S(x) = a_d x^{d}+a_{d-1}x^{d-1}+\dots+a_0$ be a polynomial with degree $d=O(1)$ satisfying $|a_d|\ge n^{-B}$ and $|a_i|\le n^{B}$ for all $0\le i\le d$ where $B$ is some large constant. Then the maximum value of $S$ on $I_B = [-n^{3B}, n^{3B}]$ is $(d+1) n^{B} n^{3Bd}$. Moreover, for all sufficiently small  $\ep_1>0$ the set of all $x\in I_B$ such that $|S(x)|\le \beta^{\ep_1}$ has measure $O(n^{3B} \beta^{\ep_1/d})$.
\end{lemma}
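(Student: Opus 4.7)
\textbf{Proof plan for Lemma \ref{lm:S}.} The first bound is a direct application of the triangle inequality. Since $|x|\le n^{3B}$ for every $x\in I_B$, one has $|x|^i\le n^{3Bd}$ (using $n^{3B}\ge 1$), so
\[
|S(x)| \le \sum_{i=0}^d |a_i|\,|x|^i \le (d+1)\,n^B\,n^{3Bd},
\]
which gives the claimed upper bound.

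For the measure bound, my plan is the classical ``small-set'' Remez argument: if the set $E := \{x\in I_B : |S(x)|\le \beta^{\ep_1}\}$ were too large, then $S$ would be small on a macroscopic subset of $I_B$, and Remez's inequality would force $S$ to be small on all of $I_B$, contradicting a pointwise lower bound at the endpoint $x=n^{3B}$. Concretely, I would first check that $|S(n^{3B})|\ge \tfrac{1}{2} n^{3Bd-B}$: the leading contribution is at least $|a_d|\,(n^{3B})^d\ge n^{3Bd-B}$, while the sum of all lower-order terms is at most $d\,n^B\,n^{3B(d-1)} = d\,n^{3Bd-2B}$, which is negligible once $n$ is large, since $d=O(1)$.

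Next I would rescale to the unit interval. Set $\tilde S(y):=S(n^{3B}y)$, a polynomial of degree $\le d$ on $[-1,1]$, and $\tilde E := n^{-3B}E \subset [-1,1]$, so that $\sup_{\tilde E}|\tilde S|\le \beta^{\ep_1}$ and $\|\tilde S\|_{L^\infty[-1,1]} \ge |\tilde S(1)| \ge \tfrac{1}{2}\,n^{3Bd-B}$. Applying Remez's inequality on $[-1,1]$ in the standard form
\[
\|\tilde S\|_{L^\infty[-1,1]} \le \Bigl(\frac{C_d}{|\tilde E|}\Bigr)^{d}\,\sup_{\tilde E}|\tilde S|
\]
(valid for a constant $C_d$ depending only on $d$, when $|\tilde E|$ is bounded away from $2$, which is the only interesting regime), and solving for $|\tilde E|$, yields
\[
|\tilde E| \le C'_d\,\beta^{\ep_1/d}\,n^{-(3Bd-B)/d} = O\!\left(n^{-3B+B/d}\,\beta^{\ep_1/d}\right).
\]
Rescaling back gives $|E|=n^{3B}|\tilde E| = O(n^{B/d}\beta^{\ep_1/d})$, which is stronger than the claimed $O(n^{3B}\beta^{\ep_1/d})$.

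Since $d=O(1)$, $B$ is a fixed constant, and the polynomial is univariate, no genuine obstacle is expected; the only care needed is to invoke Remez's inequality in the correct form and verify the lower bound $|S(n^{3B})|\ge \tfrac12 n^{3Bd-B}$, which is where the separation of scales between $|a_d|\ge n^{-B}$ and $|a_i|\le n^B$ is used.
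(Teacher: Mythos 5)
Your proposal is correct and follows essentially the same route as the paper: triangle inequality for the maximum, and a Remez-inequality argument combined with the pointwise lower bound at $x=n^{3B}$ (where $|a_d|\ge n^{-B}$ beats the lower-order terms) for the measure estimate. The only cosmetic difference is that you rescale to $[-1,1]$ and retain the quantitative bound $|S(n^{3B})|\ge\tfrac12 n^{3Bd-B}$, which yields a slightly sharper measure bound than the stated $O(n^{3B}\beta^{\ep_1/d})$, whereas the paper applies Remez directly on $I_B$ and only uses $\max_{I_B}|S|\ge 1$.
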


We can take any small constant $\ep_1\le \frac12$ that satisfies this lemma.

Taking union over all $n^{O(1)}$choices of $Q_{n_0}$, the set of points $x$ at which $|Q_{n_0}(x)|\le  \beta^{\ep_1}$ has measure at most $\beta^{2\ep_1/r}$. So, by removing these $x$ from $\CE$, the new set which we again denote by $\CE$ still has size $n^{-\Omega(1)}$. On this set, on \eqref{eq:n0:to}, by dividing by $Q_{n_0}(x)$, we get
\begin{eqnarray}
 W(a_n x) p_{n_0+1}(a_n x)&=&- \frac{\frac{A_{n_0}}{\prod_{i=n_0+1}^{n_0+r-1}A_{i}} x^{r-2}+\dots  }{  \frac{1}{\prod_{i=n_0+1}^{n_0+r-1}A_{i}} x^{r-1}+\dots }  W(a_n x) p_{n_0} (a_n x)   +O(n^{O(1)} \beta^{1-\ep_1}), \quad x\in \CE. \notag
\end{eqnarray}
This finishes the proof of Lemma \ref{lm:2deg}.
\end{proof}

\begin{proof}[Proof of Lemma \ref{lm:S}]
	For the first part, we have
	$$|S(x)|\le \sum_{i=0}^{d} |a_i|x^{i} \le (d+1) n^{B} n^{3Bd}.$$
	For the second part, let
	$$E = \{x\in I_B: |S(x)|\le \beta^{\ep_1}\}.$$
	To bound the size of $E$, we shall use the classical Remez inequality: for every measurable subset $E\subset J$,  we have
	$$
	\max_{I_B}|S|\leq \left (\frac{4|I_B|}{|E|}\right )^{d}\sup_{E}|S|\le \left (\frac{4|I_B|}{|E|}\right )^{d}\beta^{\ep_1}.
	$$
	Since
	$$\max_{I_B}|S|\ge |S(n^{3B})| \ge |a_d|n^{3Bd}-\sum_{i=0}^{d-1} |a_i| n^{3Bi}\ge n^{3Bd-B} - dn^{3B(d-1)+B}\ge 1$$
	for sufficiently large $n$. Thus,
	$$|E|\le 8n^{3B} \beta^{\ep_1/d}.$$
	This completes the proof of Lemma \ref{lm:S}.
\end{proof}

\subsection{Step 4: Arithmetic structure and  two-term degeneracy imply degeneracy}
We shall now derive a (one-term) degeneracy of the form
$$W(a_n x) p_{n_1'} (a_n x) \approx 0.$$
\begin{lemma}\label{lm:1deg} Let $[n_1, n_2]$ be as in Lemma \ref{lm:2deg}. There exists $n_3\in [n_1, n_2]$ such that
	$$\frac{R_1(x)}{R_2(x)}  W(a_n x) p_{n_3} (a_n x) = O(\beta^{1-\ep_1}), \quad x\in \CE$$
	where $R_1$ and $R_2$ are polynomials satisfying the hypothesis of Lemma \ref{lm:S} and $\deg(R_1) = \deg(R_2)-1$.
\end{lemma}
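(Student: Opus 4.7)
The plan is to parallel the proof of Lemma \ref{lm:2deg}, treating the two-term degeneracy as the analogue of the jumped three-term recurrence: iterate the two-term identity to express many $W p_{n_1+j}$'s in terms of a single $W p_{n_1}$, and then contract with one more linear dependency among the GAP vectors to collapse everything to a scalar multiple of $W p_{n_3}$. Concretely, writing $\Gamma_{n_0}(x) = -P_{n_0}(x)/Q_{n_0}(x)$ (with $\deg P_{n_0} = r-2$, $\deg Q_{n_0} = r-1$, and polynomially controlled coefficients coming from Lemma \ref{lm:2deg}), set $\Psi_j(x) := \prod_{i=0}^{j-1} \Gamma_{n_1+i}(x)$; composing $j$ copies of the two-term identity on $\CE$ yields
$$W(a_n x) p_{n_1+j}(a_n x) = \Psi_j(x)\, W(a_n x) p_{n_1}(a_n x) + O\!\left(n^{O(1)} \beta^{1-\ep_1'}\right),\quad x\in\CE,$$
for all $j = O(1)$ with $n_1+j \le n_2$. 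To control the propagated error I will restrict $\CE$ so that $|Q_{n_0}(x)| \ge \beta^{\ep_1}$ for every $n_0 \in [n_1,n_2]$ (applying Lemma \ref{lm:S} to each $Q_{n_0}$ removes only a set of Lebesgue measure $O(\beta^{\Omega(\ep_1)})$, preserving $\lambda_{Leb}(\CE) \gg n^{-O(1)}$) and then choose $\ep_1'$ slightly larger than $r'\ep_1$ in order to absorb the blow-up from $|\Gamma_{n_0}| \le n^{O(1)}\beta^{-\ep_1}$.

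Next I will invoke an additional linear dependency. Since $\Bk_{n_1}, \dots, \Bk_{n_1+r'+1}$ all lie in the $r'$-dimensional subspace $H_{[n_1, n_1']}$ constructed inside the proof of Lemma \ref{lm:2deg}, the dependencies among these $r'+2$ vectors form a space of dimension at least two. A suitable rational combination of two independent dependencies (clearing denominators via Cramer's rule) produces integers $c_0,\dots,c_{r'+1}$, not all zero, with $|c_j|\le n^{O(1)}$ and $c_0 = 0$, such that $\sum_{j=0}^{r'+1} c_j \Bk_{n_1+j} = 0$. The GAP approximation \eqref{GAP:approx} then gives $\sum_{j=1}^{r'+1} c_j W(a_n x) p_{n_1+j}(a_n x) = O(n^{O(1)}\beta)$ on $\CE$, and substituting the iterated identities yields
$$\Bigl(\sum_{j=1}^{r'+1} c_j \Psi_j(x)\Bigr) W(a_n x) p_{n_1}(a_n x) = O\!\left(n^{O(1)} \beta^{1-\ep_1'}\right),\quad x\in\CE,$$
the desired one-term degeneracy with $n_3 := n_1$.

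Finally I will write $\sum_{j=1}^{r'+1} c_j \Psi_j(x) = R_1(x)/R_2(x)$ with common denominator $R_2(x) := \prod_{i=0}^{r'} Q_{n_1+i}(x)$ of degree $(r'+1)(r-1)$ and verify the hypotheses of Lemma \ref{lm:S}. The leading coefficient of $R_2$ is a polynomially controlled product of reciprocals of $A_m$'s, bounded from below by $n^{-O(1)}$ via Lemma \ref{lm:AmBm}, while its other coefficients are $n^{O(1)}$. For $R_1$, the $j$-th summand contributes a polynomial of degree $(r'+1)(r-1) - j$, so having arranged $c_0 = 0$ forces $\deg R_1 \le \deg R_2 - 1$; the coefficient of $x^{\deg R_2 - 1}$ comes solely from the $j=1$ term and equals $-c_1 A_{n_1}$ times an analogous reciprocal product, so if $c_1 \ne 0$ it is $\ge n^{-O(1)}$ in absolute value, while if $c_1 = 0$ one iterates the argument, shrinking the degree further without damaging the size bounds. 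The main obstacle throughout is to simultaneously (a) maintain a polynomial lower bound on $|R_2|$ after restricting to $\CE$ so that division by $R_2$ is legal, and (b) keep a polynomial lower bound on the leading coefficient of $R_1$; both are resolved using Lemma \ref{lm:S} together with the explicit forms of $P_{n_0}$ and $Q_{n_0}$ given in Lemma \ref{lm:2deg}.
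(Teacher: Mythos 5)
Your mechanism is essentially the paper's: iterate the two-term degeneracy of Lemma \ref{lm:2deg} to write the weighted polynomials at nearby indices as $\Gamma$-multiples of a single base one, insert one linear dependency among the GAP vectors $\Bk_i$ (which are constant on $\CE$ by Lemma \ref{lm:n'}) through \eqref{GAP:approx}, clear denominators, and verify the hypotheses of Lemma \ref{lm:S} via Lemma \ref{lm:AmBm}. Your bookkeeping of the error exponent ($\ep_1'$ in place of $\ep_1$, after further restricting $\CE$ so all $|Q_{n_0}|\ge\beta^{\ep_1}$) is if anything more careful than the paper's and is harmless since $\ep_1$ is an arbitrarily small constant.

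The gap is in the choice of $n_3$. You fix $n_3=n_1$ and take an arbitrary dependency $\sum_{j=1}^{r'+1}c_j\Bk_{n_1+j}=0$; after clearing denominators the top-degree term of $R_1$ comes from the minimal $j^*$ with $c_{j^*}\neq 0$, so you only obtain $\deg R_1=\deg R_2-j^*$, and nothing in your construction forces $c_1\neq 0$. Your fallback ``if $c_1=0$ one iterates, shrinking the degree'' concedes this, so the stated equality $\deg(R_1)=\deg(R_2)-1$ is not actually established; the natural repair is to rebase the iteration at $n_1+j^*-1$, i.e.\ normalize the dependency so the smallest participating index carries coefficient $1$, which is exactly what the paper does when it expands $\Bk_{n_3+1}$ over the later vectors. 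More consequentially for how the lemma is used downstream: the paper selects $n_3$ by a pigeonhole over the nested spans $\langle\Bk_{n_1+j}:j\ge 2k\rangle$ so that both $\Bk_{n_3}$ and $\Bk_{n_3+1}$ lie in the span of subsequent vectors, and then uses Claim \ref{claim:non-degenerate} to guarantee $|W(a_nx)p_{n_3}(a_nx)|\ge n^{-B'}$ on $\CE$; Step 5 divides by precisely this quantity to reach the contradiction completing Lemma \ref{lm:cond-smallball'}. With your choice $n_3=n_1$ no such lower bound is available ($W(a_nx)p_{n_1}(a_nx)$ could be tiny on $\CE$), so your version, while true in spirit and adequate for an application of Lemma \ref{lm:S} to $R_1,R_2$, neither yields the degree claim as stated nor plugs into the remainder of the argument without reinstating the paper's selection of $n_3$.
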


\begin{proof}[Proof of Lemma \ref{lm:1deg}] We continue using the notations in the proof of Lemma \ref{lm:2deg}.
Let $r_0 = n_2- n_1-1$. Consider the vectors $\Bk_{n_1+i}, 0\le i\le r_0$. Because the vectors are in $\R^{r'}$ and $r_0$ is much large than $r'$, there must exist $k$ such that
$$\Bk_{n_1+2k}, \Bk_{n_1+2k+1}  \in \langle  \Bk_{n_1+2k+1+i}, 1 \le i \le r_0-2k-1 \rangle.$$
On the other hand, we have learned from Claim \ref{claim:non-degenerate} that either $|W(a_n x) p_{n_1+2k} (a_n x)| \ge n^{-B'}$ or $|W(a_n x) p_{n_1+2k_1} (a_n x)| \ge n^{-B'}$. Assume without loss of generality that $|W(a_n x) p_{n_1+2k} (a_n x)| \ge n^{-B'}$. Set
$$n_3= n_1+2k.$$

As $\Bk_{n_3} \in \langle  \Bk_{n_3+i}, 0\le  i \le r_0-2k \rangle$, from \eqref{GAP:approx}, we can find rational coefficients $d_{\ell}$ of height at most $n^{O(1)}$ such that
\begin{eqnarray}\label{eqn:1r_0}
W(a_n x) p_{n_3+1}(a_n x) - \sum_{\ell=2}^{r_0-2k} d_{\ell} W(a_n x) p_{n_3+\ell}(a_n x)= O(n^{O(1)}\beta), \quad x\in \CE.
\end{eqnarray}
 
Recall from Lemma \ref{lm:2deg} that for all $j=1, \dots, r_0-1$,
\begin{eqnarray}
\quad \ W(a_n x) p_{n_1+j+1}(a_n x)&=&- \frac{\frac{A_{n_1+j}}{\prod_{i=n_1+j+1}^{n_1+j+r-1}A_{i}} x^{r-2}+\dots  }{  \frac{1}{\prod_{i=n_1+j+1}^{n_1+j+r-1}A_{i}} x^{r-1}+\dots }  W(a_n x) p_{n_1+j} (a_n x)   +O(n^{O(1)}  \beta^{1-\ep_1}). \label{eqn:j+1:j}
\end{eqnarray}

Applying \eqref{eqn:j+1:j} for $j=n_3-n_1$, and then recursively for $j=n_3-n_1+1, \dots, n_3-n_1+ r_0-2k$ and all $x\in \CE$,
\begin{eqnarray}
W(a_n x) p_{n_3+s+1}(a_n x)&=&- \frac{\frac{\prod_{\ell=0}^{s}A_{n_3+\ell}}{\prod_{\ell=0}^{s}\prod_{i=n_3+\ell+1}^{n_3+\ell+r-1}A_{i}} x^{(s+1)(r-2)}+\dots  }{  \frac{1}{\prod_{\ell=0}^{s}\prod_{i=n_3+\ell+1}^{n_3+\ell+r-1}A_{i}} x^{(s+1)(r-1)}+\dots }  W(a_n x) p_{n_3} (a_n x)   +O( n^{O(1)} \beta^{1-\ep_1}) \notag\\
&=:& - \frac{ \bar A_{s+1} x^{(s+1)(r-2)}+\dots  }{ \hat A_{s+1} x^{(s+1)(r-1)}+\dots }  W(a_n x) p_{n_3} (a_n x)   +O(n^{O(1)} \beta^{1-\ep_1}) \notag\\
&=:& \Gamma_{s+1}(x)  W(a_n x) p_{n_3} (a_n x)   +O(n^{O(1)} \beta^{1-\ep_1}) \notag.
\end{eqnarray}

Substitute into \eqref{eqn:1r_0} we obtain
$$R(x)  W(a_n x) p_{n_3} (a_n x) = O(n^{O(1)} \beta^{1-\ep_1}), \quad x\in \CE$$
where
$$R(x) =  \Gamma_{1}(x)-\sum_{\ell=2}^{r_0-2k} d_\ell \Gamma_{\ell}(x).$$
Since $\Gamma_{\ell}(x)$ is a fraction of two polynomials with the numerator's degree is $\ell$ less than the denominator's degree, $R(x)$ can be written as $R(x) = \frac{R_1(x)}{R_2(x)}$ where $R_2(x)$ is the product of all denominators of $\Gamma_{\ell}(x)$ and $\deg(R_1) = \deg(R_2)-1$. The leading coefficient of $R_1$ is
$$ \bar A_{1} \hat A_{2} \dots \hat A_{r_0-2k} \ge n^{-\Omega(1)}.$$
Moreover, all coefficients of $R_1$ are $O(n^{O(1)})$ by Lemma \ref{lm:AmBm}. So, $R_1$ satisfies the hypothesis of Lemma \ref{lm:S}. The same reasoning applies for $R_2$.
This completes the proof of Lemma \ref{lm:1deg}.
\end{proof}

\subsection{Step 5: Degeneracy is impossible} We are now ready to derive a contradiction which completes the proof of Lemma \ref{lm:cond-smallball'}.
Since $|W(a_n x) p_{n_3} (a_n x)| \ge n^{-B}$ by the definition of $\CE'$, we get
$$R(x) = O(n^{O(1)} \beta^{1-\ep_1}), \quad  x\in \CE.$$
Since $R_2$ satisfies the hypothesis of Lemma \ref{lm:S}, we get that for all $x\in \CE$,
$$|R_2(x)|\ge n^{O(1)} \beta^{1-\ep_1}.$$
So,
$$|R_1(x)|= O(n^{O(1)} \beta^{1-\ep_1}), \quad x\in \CE.$$

Since $R_1$ also satisfies the hypothesis of Lemma \ref{lm:S} and since $|\CE|\ge n^{-O(1)}$ much larger than $n^{O(1)} \beta^{1-\ep_1}$, we obtain a contradiction to Lemma \ref{lm:S}. This completes the proof of Lemma \ref{lm:cond-smallball'}.

\section{Proof of Theorem \ref{thm:local:expectation}} \label{sec:proof:local}
Our idea is to apply Theorem \ref{thm:general:real} to the indicator function $\textbf{1}_{[a, b]}$. However, since the indicator function is not smooth, we will approximate it below and above by smooth functions, say $F_1$ and $F_2$. To this end, let $\alpha = n^{-1-c/12}$ where $c$ is the constant in Theorem \ref{thm:general:real} and let $F_1, F_2:\R \to \R$ be any continuous functions that are continuously differentiable up to degree 6 and satisfy the following
\begin{itemize}
	\item $\textbf{1}_{[a-\alpha, b+\alpha]}\ge F_2\ge \textbf{1}_{[a, b]}\ge F_1\ge  \textbf{1}_{[a+\alpha, b-\alpha]}$ \quad pointwise,
	\item $||F^{a}_i||_{\infty}\ll \alpha^{-a} \ll n^{a+c/2}$ for all $i=1, 2$ and $1\le a\le 6$.
\end{itemize}
Applying Theorem \ref{thm:general:real} to $G_i = n^{-c/2} F_i$ (where the factor $n^{-c/2}$ is added so that $||G^{a}_i||_{\infty}\ll n^{-a}$ as in the hypothesis), we obtain
\begin{eqnarray}
\E\sum G_1\left (\zeta_{i} \right)
-\E\sum G_1\left (\tilde \zeta_{i} \right) \ll n^{-c},\quad \E\sum G_2\left (\zeta_{i} \right)
-\E\sum G_2\left (\tilde \zeta_{i} \right) \ll n^{-c}\nonumber
\end{eqnarray}
which gives
\begin{eqnarray} \label{eq:F:12:0}
\E\sum F_1\left (\zeta_{i} \right)
-\E\sum F_1\left (\tilde \zeta_{i} \right) \ll n^{-c/2}, \quad \E\sum F_2\left (\zeta_{i} \right)
-\E\sum F_2\left (\tilde \zeta_{i} \right) \ll n^{-c/2}.
\end{eqnarray}
We shall prove later that
\begin{equation} \label{eq:F:12}
\E\sum \tilde F_1 \left (\zeta_{i} \right)  - \E\sum \tilde F_2\left (\zeta_{i} \right) \ll n^{-c/24}.
\end{equation}
Since $\sum F_1 \left (\zeta_{i} \right)  \le N[a, b]\le \sum F_2\left (\zeta_{i} \right)  $,  we conclude from \eqref{eq:F:12:0} and \eqref{eq:F:12} that
\begin{equation}
\E N_{\ps}[a, b] - \E N_{\wps}[a, b] \ll n^{-c/24}\nonumber
\end{equation}
as claimed.

To prove \eqref{eq:F:12}, we note that $0\le F_2 - F_1\le \textbf{1}_{[a-\alpha, a+\alpha]} + \textbf{1}_{[b-\alpha, b+\alpha]} $, thus it suffices to show the following
\begin{lemma}
	Let $I\subset D(\ep)$ be an interval of length $n^{-1-c/12}$. Then
	\begin{equation*}\label{key}
	\E N_{\wps}(I) \ll n^{-c/24}.
	\end{equation*}
\end{lemma}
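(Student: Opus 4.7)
The plan is to invoke the classical Kac--Rice formula, which is particularly clean in the Gaussian setting. Since $\tilde\xi_k$ are i.i.d.\ standard Gaussian, $\wps$ is a centered Gaussian process with polynomial (hence $C^\infty$) sample paths, and by \eqref{eq:KnW} the variance $K_n(a_n x, a_n x)$ is uniformly bounded below on $D(\ep)$, so the Gaussian vector $(\wps(x),\wps'(x))$ is non-degenerate. Kac--Rice therefore applies and gives the density of real zeros
$$
\rho_n(x) = \frac{1}{\pi}\sqrt{\frac{\Var(\wps'(x))}{\Var(\wps(x))} - \left(\frac{\mathrm{Cov}(\wps(x),\wps'(x))}{\Var(\wps(x))}\right)^{2}}.
$$

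Reading off each entry from the reproducing kernel and its derivatives one has $\Var(\wps(x)) = K_n(a_n x, a_n x)$, $\mathrm{Cov}(\wps(x),\wps'(x)) = a_n K_n^{(1,0)}(a_n x, a_n x)$, and $\Var(\wps'(x)) = a_n^{2} K_n^{(1,1)}(a_n x, a_n x)$. Dropping the nonnegative subtracted term gives
$$
\rho_n(x) \le \frac{a_n}{\pi} \sqrt{\frac{K_n^{(1,1)}(a_n x, a_n x)}{K_n(a_n x, a_n x)}}.
$$
The first inequality in Condition (C3), established in Section \ref{sec:cond4} with $c_1=0$, provides exactly the required uniform ratio bound $K_n^{(1,1)}(a_n x, a_n x) \le C n^{2}/a_n^{2} \cdot K_n(a_n x, a_n x)$ for $x\in D(\ep)$. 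Substituting this yields the uniform pointwise bound $\rho_n(x) \le C n$ on $D(\ep)$.

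Integrating over the interval $I \subset D(\ep)$ of length $n^{-1-c/12}$ then gives
$$
\E N_{\wps}(I) = \int_I \rho_n(x)\,dx \le C n \cdot n^{-1-c/12} = O(n^{-c/12}),
$$
which is far smaller than the required $n^{-c/24}$. All the analytic work has been done in Section \ref{sec:cond4}, so no significant obstacle arises here; the content of the lemma is simply that Kac--Rice converts the pointwise derivative growth bound into the desired interval estimate. A minor point worth remarking is that for this argument only the first, one-point inequality of (C3) is needed (no sup over a complex disk, no second derivatives), which is why the bound $c_1 = 0$ is available and the power of $n$ is exactly $1$.
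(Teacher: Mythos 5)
Your proof is correct and follows essentially the same route as the paper: apply the Kac--Rice formula for the Gaussian polynomial, drop the covariance term, bound the ratio of the differentiated to the undifferentiated kernel, and integrate over the short interval. The only cosmetic difference is that you quote the first inequality of (C3) directly (with $c_1=0$), whereas the paper re-derives the same bound by combining \eqref{eq:KnW} with \eqref{7.2}; the content is identical.
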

\begin{proof}
	By the Kac-Rice formula \cite{Kac1943average}, we have
	\begin{eqnarray}\label{key}
	\frac{1}{n}\E N_{\wps}(I) &=& \frac{1}{n}\E N_{\tilde P_n}(a_n I) = \int_{a_nI} \frac{1}{\pi} \frac{K_{n}(x, x)}{n}\sqrt{\frac{K_{n}^{(1,1)}(x, x)}{K_{n}(x, x)^{3}} - \left (\frac{K_{n}^{(0,1)}(x, x)}{K_{n}(x, x)^{2}}\right )^{2}} dx  \nonumber\\
	&\le&\int_{a_nI} \frac{1}{n\pi}  \sqrt{\frac{K_{n}^{(1,1)}(x, x)}{K_{n}(x, x)}  } dx =\int_{a_nI} \frac{1}{n\pi}  \sqrt{\frac{K_{n}^{(1,1)}(x, x) W^{2}(x)}{K_{n}(x, x)W^{2}(x)}  } dx \nonumber.
	\end{eqnarray}
 	
 	By \eqref{eq:KnW}, we have
	\begin{equation*}\label{key}
	\inf_{x\in[-1+\ep, 1-\ep]}  K_{n}(x, x)W^{2}(x) \gg  \frac{n}{a_n}.
	\end{equation*}
	By \eqref{eq:KnW} and \eqref{7.2} (with $u=v=0$), we have
	\begin{equation*}\label{key}
	\sup_{x\in [-1+\ep, 1-\ep]}  K^{(1, 1)}_{n}(x, x) W^{2}(x) \ll \frac{n^{3}}{a_n^{3}}.
	\end{equation*}
	Therefore,
	\begin{eqnarray*}\label{key}
		\frac{1}{n}\E N_{\wps}(I) &\ll&  \int_{a_nI}  \frac{1}{a_n} dx \ll |I| = n^{-1-c/12}
	\end{eqnarray*}
	as desired.
\end{proof}

\section{Proof of Theorem \ref{thm:general:real}} \label{sec:proof:uni}
The proof roughly follows \cite[Theorem 2.6]{nguyenvurandomfunction17} with our $\bf F_n$ plays the role of the $F_n$ in \cite{nguyenvurandomfunction17}.

 The only issue with our $F_n$ is that it is not analytic. And there are two places in \cite{nguyenvurandomfunction17} that use analyticity. We shall explain how we walk around these two places.

In the first place,  is to derive an alternative verstion of \cite[Equation 11]{nguyenvurandomfunction17} where the use of the  Green's formula is no longer applicable to our $F_n$ because it is not necessarily analytic. Firstly, we observe that the roots of $P_n^*$ and $F_n$ are the same. Thus, for any smooth function $G_j$ on $\C$, we get the following equations which can be used as a replacement of \cite[Equation 11]{nguyenvurandomfunction17}
\begin{eqnarray*}
\sum_{i} G_j({\zeta}_i)&=& \int_{\mathbb C}\log |P^*_n(z)|H_j(z)dz \\
&=& \int_{B( z_j, 1/10)}\log |F_n(u_j)|H_j(u_j)du_j - c(G),
\end{eqnarray*}
where $H_j(z) = \frac{1}{2\pi}\triangle G_j(z)$ and
$$c(G) = \int_{B( z_j, 1/10)}\log |W(a_n u_j)|H_j(u_j)du_j$$
is a deterministic number.

 The second place, also more crucial, is the treatment of \cite[Lemma 8.2]{nguyenvurandomfunction17} which originally requires that $F_n$ is analytic. Here, we restate the statement for our $F_n$ which is not necessarily analytic.
\begin{lemma}\label{lm:2norm}
	Let $0<c_2<1$ and let $F_n$ be a function of the form $F_n(x) = P_n^*(x) W(a_n x)$ where $P_n^*$ is an entire function and $-\log W =Q$ satisfies \eqref{eq:Q}. Assume that $|F_n(w)|\ge \exp(-n^{c_2})$ for some  $w\in [-1+\ep, 1-\ep]$ and $|F_n(z)|\le \exp(n^{c_2})$ for all $z\in B(w, 3/(2n))$. Then
	\begin{equation}
	\int_{B(w, 1/(2n))} \left |\log\left |F_n(z)\right |\right |^{2} dz \le C  n^{-2+6c_2}.\nonumber
	\end{equation}
	
\end{lemma}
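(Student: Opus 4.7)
The main obstacle is that $F_n$ is not analytic, but the ratio $P_n^*(z) = F_n(z)/W(a_n z)$ is a polynomial (hence entire), so $\log|P_n^*|$ is subharmonic and the standard analytic toolkit (Jensen's formula, Blaschke factorization, Harnack's inequality) applies. The plan is to reduce the problem to an $L^2$ estimate for $\log|P_n^*|$, shifted by a deterministic constant, on $B(w, 1/(2n))$.

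The plan is to begin with the decomposition $\log|F_n(z)| = \log|P_n^*(z)| - Q(a_n z)$, and then use \eqref{eq:Q} together with the corresponding estimate in the imaginary direction (available from \eqref{5.5} in our setting) to show $|Q(a_n z) - Q(a_n w)| = O(1)$ uniformly for $z \in B(w, 3/(2n))$. Setting $v(z) := \log|P_n^*(z)| - Q(a_n w)$, the hypotheses yield $v(w) \geq -n^{c_2}$ and $v(z) \leq n^{c_2} + O(1)$ on $B(w, 3/(2n))$, while $|\log|F_n(z)||^2 \leq 2v(z)^2 + O(1)$ on the inner ball. Thus it suffices to bound $\int_{B(w, 1/(2n))} v(z)^2\, dz$.

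Since $v$ is subharmonic with a controlled upper bound on the outer disk and a controlled value at the center, Jensen's formula applied on $B(w, 3/(2n))$ shows that the number $N$ of zeros of $P_n^*$ inside $B(w, 5/(4n))$ is $O(n^{c_2})$. Let $\zeta_1,\dots,\zeta_N$ denote these zeros and let $\beta$ be the associated Blaschke product for the disk $B(w, 5/(4n))$, so that $G := P_n^*/\beta$ is analytic and nonvanishing on that disk. Define $h := \log|G| - Q(a_n w)$, harmonic on $B(w, 5/(4n))$. The maximum principle combined with $|\beta| = 1$ on the boundary gives $h \leq n^{c_2} + O(1)$ there, while $h(w) \geq v(w) \geq -n^{c_2}$ since $|\beta(w)| \leq 1$. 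Applying Harnack's inequality to the nonnegative harmonic function $(n^{c_2}+O(1)) - h$ yields $|h(z)| = O(n^{c_2})$ uniformly on $B(w, 1/(2n))$.

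To conclude, each Blaschke factor satisfies $\log|\beta_j(z)| = \log|z-\zeta_j| + O(\log n)$ on $B(w, 1/(2n))$ (the denominators are bounded away from zero by the geometry, since $|z-w| \leq 1/(2n)$ while the outer radius is $5/(4n)$). Writing $v = h + \sum_j \log|\beta_j|$, two applications of Cauchy--Schwarz together with the standard bound $\int_{B(w,1/(2n))} |\log|z-\zeta_j||^2\, dz = O(n^{-2}\log^2 n)$ and the area estimate $|B(w,1/(2n))| = O(n^{-2})$ give
\begin{equation*}
\int_{B(w, 1/(2n))} v(z)^2\, dz \;\lesssim\; n^{2c_2}\cdot n^{-2} + N^2\cdot n^{-2}\log^2 n \;\lesssim\; n^{-2+3c_2},
\end{equation*}
which is stronger than the target $n^{-2+6c_2}$. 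The key conceptual hurdle is precisely the failure of analyticity; we circumvent it by exploiting the fact that $Q(a_n z)$, though possibly of size $\Theta(n)$ in the bulk, is essentially constant (modulo $O(1)$) on the small disk $B(w, 3/(2n))$, which lets us subtract it off and work with a genuine subharmonic function associated to the polynomial $P_n^*$.
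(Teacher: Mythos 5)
Your proof is correct, and it shares the paper's overall skeleton: pass from $F_n$ to the entire function $P_n^*$ by using \eqref{eq:Q} (together with \eqref{5.5}) to treat $Q(a_n\,\cdot)$ as constant up to $O(1)$ on the small disk, count the zeros near $w$ by Jensen from the two-sided hypotheses, remove those zeros to get a harmonic function, obtain a two-sided pointwise bound via the maximum principle and Harnack, and finally integrate, handling the removed logarithmic singularities in $L^2$. Where you genuinely diverge is in how the zeros are removed. The paper uses a pigeonhole argument to select a radius $r\in[1/(2n),1/n]$ surrounded by a zero-free annulus of width $\eta=0.1\,n^{-1-c_2}$ and divides by the monomial factors $(nz-n\zeta_i)$; this costs a factor $(n\eta)^{-m}=e^{O(n^{2c_2})}$ in the boundary upper bound and a Harnack constant $\approx(2r+\eta)/\eta\approx n^{c_2}$ from the thin annulus, leading to the pointwise bound $n^{3c_2}$ and hence $n^{-2+6c_2}$. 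You instead use a Blaschke factorization on the fixed disk $B(w,5/(4n))$: since $|\beta|=1$ on its boundary there is no loss in the upper bound for $\log|G|$, and since the inner radius $1/(2n)$ is a fixed fraction of $5/(4n)$ the Harnack constant is $O(1)$; the only price is an $O(\log n)$ per Blaschke factor, so you get $|h|=O(n^{c_2})$ and a final bound of order $n^{-2+2c_2}\log^2 n\le n^{-2+3c_2}$, stronger than the lemma asserts, while avoiding the pigeonhole step entirely. Two small points to tidy up, neither a real gap: since the upper bound on $|F_n|$ is only assumed on $B(w,3/(2n))$, run Jensen over a circle of radius slightly below $3/(2n)$; and perturb the radius $5/(4n)$ slightly if a zero of $P_n^*$ happens to lie exactly on that circle (Harnack needs only nonnegativity and harmonicity on the open disk, so continuity up to the closure is not an issue).
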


\begin{proof}[Proof of Lemma \ref{lm:2norm}]
	We follow ideas from \cite{DOV}; the constant $6$ in the conclusion is adhoc but we make no attempt to optimize it.
		
	From  Jensen's inequality for the number of roots of $P_n^*$, we have
	\[N_{F_n}({B(w, 1/n)}) = N_{P_n^*}({B(w, 1/n)})\le \log \frac{5}{2} (\log M - \log|P^*_n(w)|) < 2   (\log M - \log|P^*_n(w)|)
	\]
	where $N_{F_n}({B(w, 1/n)})$ is the number of zeros of $F_n$ in $B(w, 1/n)$ and $M = \max_{|w - z| = 2/n} |P^*_n(z)|$.
	
	We have
	$$\frac{\max_{|w - z| = 2/n} |P^*_n(z)|}{|P^*_n(w)|} \le \frac{\max_{|w - z| = 2/n} |F_n(z)|}{|F_n(w)|}\sup_{|w - z| = 2/n} \frac{W(a_n w)}{W(a_n z)}\ll \frac{\max_{|w - z| = 2/n} |F_n(z)|}{|F_n(w)|}$$
	where in the last inequality, we used \eqref{eq:Q}.
	
	From this and the assumption of the lemma, we conclude that
	\begin{equation}
	N_{F_n}({B(w, 1/n)})\le 2 n^{c_2}.\label{noncluster}
	\end{equation}
	
	By the pigeonhole principle, there exists a radius $1/n\ge r\ge 1/(2n)$ for which $F_n$ has no zeros in the annulus $B(w, r +\eta)\setminus B(w, r-\eta)$ where $\eta =.1 n ^{-1-c_2}$. We can also assume, without loss of generality, that there is no root on the boundary of each disk.
		
	Let $\zeta_1, \dots, \zeta_m$ be the zeros of $F_n$ in the disk $B(w, r - \eta)$. By \eqref{noncluster},   $m \le 2 n^{c_2}$. Define
	$$p(z) := \frac{P^*_n(z)}{(nz-n\zeta_1)\dots(nz-n\zeta_m)} \quad \text{and}\quad f(z) := p(z) W(a_n z).$$
	
	 Since $p$ is an entire function which does not have zeros in the (closed)  disk $B(w, r +\eta)$, $\log|p|$ is harmonic on this disk. For every $z$ with $|z-w| = r + \eta$, the distance from
	$z$ to any $\zeta_i$ is at least $\eta$, so
	$$|p(z)|\le |P^*_n(z)|n^{-m}\eta^{-m}\ll \exp(n^{c_2})(n\eta)^{-m}W^{-1}(a_n w)=: \exp(n^{c_2})(n\eta)^{-m} \alpha^{-1}$$
	where $\alpha =W(a_n w) $ and we used $\inf_{|z-w|=r+\eta} W(a_n z) = \Theta(W(a_n w))$ by \eqref{eq:Q}.
	
	It follows that for any $z$ where  $|z-w|= r + \eta$
	\begin{equation}\label{upperf} \log|p(z)|\le n^{-c_2} + m \log (n\eta)^{-1 } +\log \alpha+ O(1)\le 21 n^{2c_2}-\log \alpha, \end{equation}   since
	$$n^{c_2} \le n^{2c_2}, m \le 2 n^{c_2}, (n\eta)^{-1} = 10 n^{c_2} \le
	e ^{10 n^{c_2}}. $$
	Because of the harmonicity of $\log|p|$, its maximum
	is achieved on the boundary, and so the same bound holds for all $z\in B(w, r+\eta)$.
	On the other hand, from the lower bound on $|F_n(w)|$ in the lemma
	and the fact that  $|\zeta_i-w|\le 1/n$,
	\begin{equation} \label{lowerf}  \log |p(w)|\ge \log|P^*_n(w)|\ge -n^{c_2}-\log \alpha. \end{equation}
	Now, we make a critical use of Harnack's inequality \cite[Chapter 11]{Ru}, which asserts that if a function  $G$ is harmonic on the open disk $B(w, R)$  and is  nonnegative continuous  on its closure, for some $w\in \C$ and $R>0$, then  for every $z\in B(w, r)$ with $r<R$,
	$$G(z) \le \frac{R+r}{R-r} G(w). $$
	We apply Harnak's inequality  to $G(z):= 21 n^{2c_2} -\log \alpha - \log|p|$ which is
	nonnegative harmonic on $B(w, R)$ with
	$R:= r+ \eta$.  By this inequality, we conclude that for all  $z\in B(w, r)$
	\begin{equation} \label{Harnak1}  21 n^{2c_2}-\log \alpha - \log|p(z)|\le \frac{2r +\eta }{\eta} (21 n^{2c_2}-\log \alpha - \log|p(w)|)\ll n^{2c_2} \frac{2r +\eta }{\eta} . \end{equation}
	As $\eta =.1 n^{-1-c_2}$ and $r < 1$, $\frac{2r+ \eta}{\eta} \ll \eta^{-1} \ll  n^{c_2}$.	It follows that
	$$\log |p(z)|+\log \alpha \gg  -  n^{3c_2}.$$
	
	Together with \eqref{upperf}, we have
	\begin{equation}    |\log |p(z)|+\log \alpha| \ll n^{3c_2}\quad\forall z\in B(w, r). \nonumber\end{equation}
	Thus,
	\begin{equation} \label{absolutef}  |\log |f(z)|| \ll n^{3c_2}\quad\forall z\in B(w, r). \end{equation}
	
	By the triangle inequality and the definition of $f$,
	\begin{equation} \label{Harnak2}
	||{\log |F_n(z)|}||_{L^2(B( w, r))} \le ||{\log |f(z)|}||_{L^2(B( w, r))} + \sum_{i = 1}^{m}||{\log |nz-n\zeta_i|}||_{L^2(B( w, r))}.
	\end{equation}
	
	Notice that  each of the $m$ terms in the sum above is at most $\int_{B(0, 2r-\eta)} |\log |nz||^{2}dz$, as
	$|\zeta_i| \le r -\eta$ for all $i$. As $r < 1/n$, we can further upper bound it by
	$$\int_{B(0, 2/n)} |\log |nz||^{2}dz= n^{-2}\int_{B(0, 2)} |\log |u||^{2}du \ll n^{-2}.$$
	Since $m \le 2 n^{c_2} $, we have
	$$||{\log |F_n(z)|}||_{L^2(B( w, r))} \ll n^{-2+3c_2} $$ which implies  the claim of the lemma as
	$r \ge 1/(2n)$.
	\end{proof}

\subsection{Proof of Theorem \ref{thm:general:real}} \label{pthm:general:real} With our  $\bf {P_n^*}$ plays the role of $F_n$ in \cite{nguyenvurandomfunction17}, the proof is identical to the proof of \cite[Theorem 2.6]{nguyenvurandomfunction17} except that here we have a more relaxed condition (C3) compared to \cite[Condition C2(4)]{nguyenvurandomfunction17} in the second part on the growth of the second derivatives. In particular, in the latter, the condition was
\begin{equation}
\sum_{j=0}^{n} \sup_{z\in B_{\C} (s, 1/n)}  |p_j''(a_n z)|^{2} a_n^4 \le C_1 n ^{4+c_1}\sum_{j=0}^{n} |p_j(a_n s)|^{2}\nonumber
\end{equation}
while here we only require
\begin{equation}
\sup_{z\in B_{\C} (s, 1/n)} \sum_{j=0}^{n} |p_j''(a_n z)|^{2} a_n^4 \le C_1 n ^{4+c_1}\sum_{j=0}^{n} |p_j(a_n s)|^{2}.\nonumber
\end{equation}

The only place in \cite{nguyenvurandomfunction17} where this condition was used is to prove \cite[Equations 24,25]{nguyenvurandomfunction17}. Here, we show how to derive these equations with our new condition. We shall use the notation in \cite{nguyenvurandomfunction17} for this part which says
%
%
$$p(z) = \tilde F(z)-\tilde F(x)-(z-x)\tilde F'(x).$$
So,
$$\E p(z) = \E(\tilde F(z)-\tilde F(x)-(z-x)\tilde F'(z)) = 0.$$
This trivially implies that the left-most side of \cite[Equation 24]{nguyenvurandomfunction17} is smaller than its right-most side.

Next, for \cite[Equation 25]{nguyenvurandomfunction17}, we  observe that
$$p(z) = \int_{x}^{z} \tilde F'(t)dt-(z-x)\tilde F'(x) = \int_{x}^{z}\int_{x}^{t} \tilde F''(w)dwdt.$$
Hence, by Holder's inequality
$$|p(z)|^2 \le \frac{(z-x)^{2}}{2}\int_{x}^{z}\int_{x}^{t} |\tilde F''(w)|^{2}dwdt.$$
$$\Var (p(z) )= \E |p(z)|^2 \le \frac{(z-x)^{4}}{4}\sup_{w\in [x, z]} \E |\tilde F''(w)|^{2}\le \frac{(z-x)^{4}}{4} \sup_{w\in [x, z]}\sum_{j=1}^{n} |\phi_j''(w)|^{2}.$$
Now that we have moved the supremum outside of the summation, we can use our relaxed condition (C3) to get that
  $$\Var (p(z) ) = O\left (\delta_n^{4c_2-c_1} \Var (\tilde F_n (x))\right)$$
  which is \cite[Equation 25]{nguyenvurandomfunction17}.

\section{Almost sure zero distribution results and consequences} \label{sec:zerodistr}

The main goal of this section is to prove Theorem \ref{thm:zeroasympt}.  We start with several auxiliary facts on the asymptotic properties of random variables $\{\xi_n\}_{n=0}^\infty.$

\begin{lemma}\label{lem11.1}
If $\{\xi_n\}_{n=0}^\infty$ are random variables such that $\E[|\xi_n|^{2+\ep_0}] < C$ for some constants $C,\ep_0>0$ and all $n=0,1,\dots$, then
\begin{align} \label{11.1}
\limsup_{n\to\infty} |\xi_n|^{1/n}\le 1 \quad\mbox{ a.s.}
\end{align}
and
\begin{align} \label{11.2}
\limsup_{n\to\infty} \left(\max_{0\le k\le n} |\xi_k| \right)^{1/n} \le 1 \quad\mbox{ a.s.}
\end{align}
\end{lemma}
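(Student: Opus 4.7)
The plan is to deduce both statements from Borel--Cantelli, using the uniform $(2+\ep_0)$-moment bound on the $\xi_n$.

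For \eqref{11.1}, fix $\delta > 0$ and apply Markov's inequality to the random variable $|\xi_n|^{2+\ep_0}$:
\[
\P\bigl(|\xi_n| > (1+\delta)^n\bigr) \;=\; \P\bigl(|\xi_n|^{2+\ep_0} > (1+\delta)^{n(2+\ep_0)}\bigr) \;\le\; \frac{\E[|\xi_n|^{2+\ep_0}]}{(1+\delta)^{n(2+\ep_0)}} \;\le\; \frac{C}{(1+\delta)^{n(2+\ep_0)}}.
\]
Since this bound is summable in $n$ (the ratio is a geometric series with ratio $(1+\delta)^{-(2+\ep_0)} < 1$), the Borel--Cantelli lemma gives that almost surely $|\xi_n| \le (1+\delta)^n$ for all but finitely many $n$. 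Hence $\limsup_{n\to\infty} |\xi_n|^{1/n} \le 1+\delta$ almost surely. Intersecting the corresponding probability-one events over the countable sequence $\delta = 1/k$, $k \in \N$, yields \eqref{11.1}.

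For \eqref{11.2}, I would leverage \eqref{11.1} pathwise. On the probability-one event where $\limsup_n |\xi_n|^{1/n} \le 1$, fix $\delta > 0$; there is a (random) index $N_0 = N_0(\delta,\omega)$ such that $|\xi_n| \le (1+\delta)^n$ for all $n \ge N_0$. For such $n$,
\[
\max_{0\le k\le n} |\xi_k| \;\le\; \max\Bigl(\max_{0\le k< N_0}|\xi_k|,\ (1+\delta)^n\Bigr),
\]
and since the first quantity inside is a finite (random) constant in $n$, taking the $n$-th root and sending $n \to \infty$ yields $\limsup_n \bigl(\max_{0\le k\le n}|\xi_k|\bigr)^{1/n} \le 1+\delta$. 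Letting $\delta \to 0$ along $\delta = 1/k$ gives \eqref{11.2}.

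There is no real obstacle here: the only point requiring a touch of care is the order of quantifiers when passing from "for each $\delta$, almost surely" to "almost surely, for all $\delta$"---this is handled simply by choosing a countable dense sequence $\delta_k \downarrow 0$ and taking the intersection of the corresponding probability-one events, which is again of probability one. The assumption of independence of the $\xi_n$ is not used; only the uniform $(2+\ep_0)$-moment bound is needed.
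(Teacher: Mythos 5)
Your proof is correct and follows essentially the same route as the paper: the paper derives the same tail bound $\P(|\xi_n|>x)\le C x^{-(2+\ep_0)}$ and then invokes Lemma 4.1 of \cite{pritsker1}, which encapsulates exactly the Borel--Cantelli step you carry out explicitly, so your argument is just a self-contained version of it. Your derivation of \eqref{11.2} from \eqref{11.1} (splitting off the finitely many initial terms and letting $\delta\to 0$ along a countable sequence) is identical to the paper's, and your observation that independence is not needed is accurate.
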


\begin{proof}[Proof of Lemma \ref{lem11.1}]
It is immediate from our assumptions that
\[
x^{2+\ep_0}\, \P(\{|\xi_n| > x\}) \le \E[|\xi_n|^{2+\ep_0}] < C.
\]
Using the distribution function of $|\xi_n|$ defined by $H_n(x)=\P(\{|\xi_n|\le x\}),\ x\in\R,$ we estimate
\begin{align*}
1 - H_n(x) = \P(\{|\xi_n| > x\}) \le C/x^{2+\ep_0}, \quad x>0,\ n=0,1,\ldots.
\end{align*}
Now \eqref{11.1} follows directly from Lemma 4.1 of \cite{pritsker1}, see (4.1) there.

Let $\CA$ be the event of probability one in \eqref{11.1}. On $\CA$, for any $\eps>0$ there is $n_{\eps}\in\N$ such that $|\xi_n|^{1/n} \le 1 + \eps$
for all $n\ge n_{\eps}$ by \eqref{11.1}. Hence
\[
\max_{0\le k\le n} |\xi_k|^{1/n} \le \max\left(\max_{0\le k\le n_{\eps}} |\xi_k|^{1/n}, 1+\eps \right) \to 1+\eps \quad \mbox{ as } n\rightarrow\infty,
\]
and \eqref{11.2} follows by letting $\eps\to 0.$\end{proof}

In the next two auxiliary lemmas, we also show that under our standard assumptions on the random variables, they cannot be too concentrated.
\begin{lemma}\label{lm:anti}
Let $\{\xi_k\}_{k=0}^\infty$ be independent random variables such that $\E[\xi_k]=0, \Var[\xi_k]=1,$ and $\E[|\xi_k|^{2+\ep_0}] < C$
for some constants $C,\ep_0>0$ and all $k=0,1,\dots$.	There exists a constant $a>0$ depending only on $C$ and $\ep_0$ such that for all $k$,
\begin{equation}\label{anti:a}
\P(|\xi_k|\le a)\le 1-a.
\end{equation}
Moreover, there exists a constant $a'>0$ depending only on $C$ and $\ep_0$ such that for all $k$ and for all $x\in \R$,
\begin{equation}\label{anti:ax}
\P(|\xi_k-x|\le a')\le 1-a'.
\end{equation}
\end{lemma}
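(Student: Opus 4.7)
My plan is to derive both anti-concentration estimates from the combination of $\E\xi_k^2 = 1$ and the uniform $(2+\ep_0)$-moment bound, via a Paley--Zygmund / reverse-H\"older style argument.

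For part (a), I would start from the decomposition
\[
1 = \E\xi_k^2 \le a^2 + \E[\xi_k^2 \mathbf{1}_{|\xi_k|>a}]
\]
and estimate the tail integral by H\"older's inequality with conjugate exponents $(2+\ep_0)/2$ and $(2+\ep_0)/\ep_0$:
\[
\E[\xi_k^2 \mathbf{1}_{|\xi_k|>a}] \le \bigl(\E|\xi_k|^{2+\ep_0}\bigr)^{2/(2+\ep_0)} \P(|\xi_k|>a)^{\ep_0/(2+\ep_0)} \le C^{2/(2+\ep_0)}\,\P(|\xi_k|>a)^{\ep_0/(2+\ep_0)}.
\]
Rearranging yields $\P(|\xi_k|>a) \ge \bigl((1-a^2)/C^{2/(2+\ep_0)}\bigr)^{(2+\ep_0)/\ep_0}$, whose right-hand side tends to the positive constant $C^{-(2+\ep_0)/\ep_0}$ as $a\to 0$. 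Choosing $a$ sufficiently small in terms of $C$ and $\ep_0$ forces $\P(|\xi_k|>a) \ge a$, which is exactly \eqref{anti:a}.

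For part (b), the natural substitute for the second-moment identity is $\E[(\xi_k - x)^2] = \Var\xi_k + (\E\xi_k - x)^2 = 1 + x^2$. The same H\"older splitting applied to $\xi_k - x$ now demands uniform control of $\E|\xi_k - x|^{2+\ep_0}$, which I would obtain from the elementary bound $(|\xi_k|+|x|)^{2+\ep_0} \le 2^{1+\ep_0}(|\xi_k|^{2+\ep_0}+|x|^{2+\ep_0})$, giving $\E|\xi_k - x|^{2+\ep_0} \le 2^{1+\ep_0}(C+|x|^{2+\ep_0})$. Assuming for contradiction that $\P(|\xi_k - x|>a') < a'$, and using the subadditivity of $t\mapsto t^{2/(2+\ep_0)}$ to distribute the resulting $(2+\ep_0)/2$-th root, I arrive at an inequality of the shape
\[
1 + x^2 \le (a')^2 + K\bigl(C^{2/(2+\ep_0)} + x^2\bigr)(a')^{\ep_0/(2+\ep_0)},
\]
with $K$ a constant depending only on $\ep_0$.

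The main obstacle is uniformity in $x$: the same $x^2$ appears on both sides of this inequality, so one must ensure that its coefficient on the right is strictly less than that on the left. I would resolve this by first choosing $a'$ small enough that $K(a')^{\ep_0/(2+\ep_0)} < 1$, and then further small enough that $(a')^2 + K C^{2/(2+\ep_0)}(a')^{\ep_0/(2+\ep_0)} < 1$; both conditions are compatible and depend only on $C$ and $\ep_0$. Rewriting the displayed inequality as $\bigl(1 - K(a')^{\ep_0/(2+\ep_0)}\bigr) x^2 + 1 \le (a')^2 + K C^{2/(2+\ep_0)}(a')^{\ep_0/(2+\ep_0)}$, the left-hand side is at least $1$ while the right-hand side is strictly less than $1$, producing a contradiction uniform in $x$ and establishing \eqref{anti:ax}.
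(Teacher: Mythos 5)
Your proposal is correct. Part (a) is essentially the paper's own argument: split $\E\xi_k^2$ at level $a$, apply H\"older with exponents $(2+\ep_0)/2$ and $(2+\ep_0)/\ep_0$, and let $a\to 0$ to force a contradiction (the paper phrases it as a contradiction from $\P(|\xi_k|\ge a)<a$, but the computation is the same).

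For part (b) you take a genuinely different route. The paper symmetrizes: assuming $\P(|\xi_k-x|\le a')>1-a'$ and taking an independent copy $\xi_k'$, it gets $\P(|\xi_k-\xi_k'|\le 2a')\ge(1-a')^2\ge 1-2a'$, and then applies part (a) to $(\xi_k-\xi_k')/\sqrt2$, which has mean $0$, variance $1$ and uniformly bounded $(2+\ep_0)$-moments; the translation $x$ disappears automatically, and no new estimate is needed beyond checking the moment bound for the symmetrized variable (which uses the same convexity inequality $(u+v)^{2+\ep_0}\le 2^{1+\ep_0}(u^{2+\ep_0}+v^{2+\ep_0})$ that you invoke). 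You instead handle $x$ head-on: the identity $\E[(\xi_k-x)^2]=1+x^2$ grows at exactly the same quadratic rate as your bound $\bigl(\E|\xi_k-x|^{2+\ep_0}\bigr)^{2/(2+\ep_0)}\le K\bigl(C^{2/(2+\ep_0)}+x^2\bigr)$ obtained from convexity plus subadditivity of $t\mapsto t^{2/(2+\ep_0)}$, and your two-step choice of $a'$ (first making $K(a')^{\ep_0/(2+\ep_0)}<1$ so the $x^2$ terms can be absorbed, then making the constant part of the right side less than $1$) correctly yields a contradiction uniform in $x$. Your argument is slightly longer and constant-heavier, but it is self-contained and makes the mechanism of uniformity in $x$ explicit; the paper's symmetrization is shorter and reduces (b) to (a) at the cost of introducing an independent copy. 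Both are valid proofs with constants depending only on $C$ and $\ep_0$.
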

\begin{proof}
	To see \eqref{anti:a},  assume that $\P(|\xi_k|\ge a)< a$, then we obtain that
	\begin{eqnarray*}
		1 &=& \E [|\xi_k|^{2}] =\E [|\xi_k|^2 \textbf{1}_{|\xi_k|< a}] + \E [|\xi_k|^{2} \textbf{1}_{|\xi_k|\ge a}] \\
		&\le& a^{2} + \left(\E [|\xi_k|]^{2+\ep_0}\right)^{2/(2+\ep_0)} \left(\P(|\xi|\ge a)\right )^{\ep_0/(2+\ep_0)} \text{ by H\"older's inequality} \\
		&\le& a^{2} + C^{2/(2+\ep_0)} a^{\ep_0/(2+\ep_0)}.
	\end{eqnarray*}
	Since the latter upper bound converges to $0$ as $a\to 0$, there must exist some $a>0$ (independent of $k$) for which it is smaller than 1. This
	contradiction shows that there is a sufficiently small $a>0$ that satisfies \eqref{anti:a}.
	
	Next, for \eqref{anti:ax}, assume that $\P(|\xi_k-x|\le a')> 1-a'$ then letting $\xi_k'$ be an independent copy of $\xi_k$, we have
	 \begin{equation}\label{eq:a'}
	 \P(|\xi_k - \xi_k'|\le 2a')\ge (1-a')^{2}\ge 1-2a'.
	 \end{equation}
	 By applying the first part to the random variables $\frac{\xi_k-\xi_k'}{\sqrt 2}$ which have mean 0, variance 1, and uniformly bounded $(2+\ep_0)$-moments, we obtain an $a'$ that violates \eqref{eq:a'}, and hence satisfies \eqref{anti:ax}.
	\end{proof}

\begin{lemma}\label{lem11.2}
If $\{\xi_k\}_{k=0}^n$ are independent random variables such that $\E[\xi_k]=0, \Var[\xi_k]=1,$ and $\E[|\xi_k|^{2+\ep_0}] < C$
for some constants $C,\ep_0>0$ and all $k=0,1,\dots$, then there is $b>0$ such that
\begin{align} \label{11.3}
\liminf_{n\to\infty} \left(\max_{n-b\log{n}<k\le n} |\xi_k|\right)^{1/n} \ge 1 \quad\mbox{ a.s.}
\end{align}
\end{lemma}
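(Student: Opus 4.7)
The plan is to reduce the claim to a routine Borel--Cantelli argument, using the anti-concentration estimate already established in Lemma \ref{lm:anti}. Since $\max^{1/n} \ge 1 - o(1)$ really only requires $\max$ to be bounded below by a positive constant for all large $n$, the task is simply to argue that among the last $\Theta(\log n)$ random variables, at least one has absolute value bounded away from $0$, with probability summably close to $1$.

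First I would apply Lemma \ref{lm:anti} to fix a constant $a\in(0,1)$, depending only on $C$ and $\ep_0$, such that
\[
\P(|\xi_k|\le a) \le 1 - a \qquad \text{for every } k\ge 0.
\]
Next, for $b>0$ to be chosen, define the event
\[
E_n = \Bigl\{ \max_{n - b\log n < k \le n} |\xi_k| \le a \Bigr\}.
\]
Independence of the $\xi_k$ gives $\P(E_n) \le (1-a)^{\lfloor b\log n\rfloor} = n^{b\log(1-a)} = n^{-b|\log(1-a)|}$.

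Now choose $b > 2/|\log(1-a)|$, so that $\sum_{n\ge 2} \P(E_n) < \infty$. The first Borel--Cantelli lemma then yields $\P(E_n \text{ i.o.}) = 0$, so on an event of probability one, there exists a (random) $n_0$ such that for every $n\ge n_0$,
\[
\max_{n - b\log n < k \le n} |\xi_k| > a.
\]
Taking $n$th roots and letting $n\to\infty$, this gives $\liminf_{n\to\infty}\bigl(\max_{n - b\log n < k \le n} |\xi_k|\bigr)^{1/n} \ge \lim_{n\to\infty} a^{1/n} = 1$, almost surely, which is \eqref{11.3}.

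There is no real obstacle here: the only ingredient beyond Borel--Cantelli is the uniform-in-$k$ anti-concentration bound, and this has already been supplied by Lemma \ref{lm:anti} using precisely the moment hypotheses assumed in the lemma. The minor point to verify is that the constant $b$ depends only on $a$ (hence only on $C$ and $\ep_0$), which is transparent from the choice $b > 2/|\log(1-a)|$.
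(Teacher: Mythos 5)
Your proof is correct and follows essentially the same route as the paper: Borel--Cantelli combined with the anti-concentration bound \eqref{anti:a} from Lemma \ref{lm:anti}, applied to the last $\Theta(\log n)$ coefficients with $b$ chosen of size $2/|\log(1-a)|$. The only (harmless) difference is that you bound the maximum below by the fixed constant $a$ and use $a^{1/n}\to 1$, whereas the paper bounds $\P(M_n\le\lambda^n)$ for each $\lambda<1$; both yield \eqref{11.3}.
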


\begin{proof}
Let $s_n\le n,\ n\in\N,$ be a sequence of natural numbers that will be specified later. Consider
\begin{align*}
M_n:=\max_{n-s_n<k\le n} |\xi_k|.
\end{align*}
The statement
\begin{align*}
\liminf_{n\to\infty} \left(M_n\right)^{1/n} \ge 1 \quad\mbox{ a.s.}
\end{align*}
is equivalent to
\begin{align*}
\P(\{M_n \le \lambda^n \mbox{ i.o.}\}) = 0
\end{align*}
for all positive $\lambda<1.$ The latter would follow from the first Borel-Cantelli Lemma, if we show that
\begin{align*}
\sum_{n=1}^\infty \P(\{M_n \le \lambda^n\}) < \infty
\end{align*}
for all positive $\lambda<1.$
Since the variables $\{\xi_k\}_{k=0}^{\infty}$ are independent, we have
\begin{align*}
\P(\{M_n \le \lambda^n\}) = \prod_{k=1}^{s_n }\P(\{|\xi_{n-k}| \le \lambda^n\}).
\end{align*}
Using \eqref{anti:a}, for any $\lambda<1$ we find $N=N(a,\lambda)\in\N$ such that $\P(\{|\xi_k| \le \lambda^n\})\le 1-a$ for all $n\ge N$ and all $k=0,1,\dots.$ This gives
\begin{align*}
\sum_{n=1}^\infty \P(\{M_n \le \lambda^n\}) \le \sum_{n=1}^\infty (1-a)^{s_n} < \infty,
\end{align*}
provided $(1-a)^{s_n} \le 1/n^2$ for large $n$. It suffices to take $s_n \ge (-2/\log(1-a))\log{n}$ to satisfy the latter condition.
\end{proof}

We now state a companion result for the coefficients of our contracted random orthogonal polynomials
\[
P_n^*(x)=P_n(a_n x)=\sum_{k=0}^n \xi_k p_k(a_n x)=\sum_{k=0}^n c_{k,n} x^k,
\]
see \eqref{1.5}.

\begin{lemma}\label{lem11.3}
Let $\{\xi_k\}_{k=0}^n$ be independent random variables such that $\E[\xi_k]=0, \Var[\xi_k]=1,$ and $\E[|\xi_k|^{2+\ep_0}] < C$
for some constants $C,\ep_0>0$ and all $k=0,1,\dots$. Assume that $W=e^{-Q}\in \mathcal{F} (C^2)$, where $Q$ is even, and the function
$T$ in the definition of $\mathcal{F} (C^2)$ satisfies \eqref{1.3} with $\alpha\in(1,\infty)$. For the orthogonal polynomials $p_k$ that
correspond to the weight $W$, write
\[
p_k(a_n x) = \sum_{j=0}^k b_{j,k} x^j = b_{k,k} \prod_{j=1}^k (x-x_{j,k}),\quad k=0,1,\ldots,n.
\]
If the random variables $c_{k,n}$ are defined by
\[
c_{k,n}:= \sum_{i=k}^n \xi_i b_{k,i},\quad  k=0,1,\ldots,n,
\]
then there is $b>0$ such that
\begin{align} \label{11.5}
\liminf_{n\to\infty} \left(\max_{n-b\log{n}<k\le n} |c_{k,n}|\right)^{1/n} \ge 2\, e^{1/\alpha} \quad\mbox{ a.s.}
\end{align}
\end{lemma}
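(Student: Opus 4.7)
The plan is to apply the first Borel--Cantelli lemma to the events
\[
E_n(\eps):=\bigcap_{k\in(n-b\log n,\,n]}\bigl\{|c_{k,n}|\le (2e^{1/\alpha}-\eps)^n\bigr\},
\]
showing $\sum_n\P(E_n(\eps))<\infty$ for each $\eps>0$; letting $\eps\downarrow 0$ along a countable sequence then yields \eqref{11.5}. The key deterministic input is the classical asymptotic $\gamma_n^{1/n}a_n\to 2e^{1/\alpha}$ for orthonormal polynomials with exponential weights in $\mathcal{F}(C^2)$ (\cite{Lubinskybookexp}); combined with the slow variation $a_n/a_k\to 1$ when $k/n\to 1$, this gives
\[
b_{k,k}^{1/n}=(\gamma_k a_n^k)^{1/n}\longrightarrow 2e^{1/\alpha}
\]
uniformly for $k\in(n-b\log n,n]$, so $b_{k,k}\ge(2e^{1/\alpha}-\eps/2)^n$ throughout the window for all large $n$.

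For the off-diagonal entries I write $b_{k,\ell}=\pm\gamma_\ell a_n^\ell\,e_{\ell-k}(y_{1,\ell}/a_n,\ldots,y_{\ell,\ell}/a_n)$, where the zeros of $p_\ell$ lie in $[-a_\ell,a_\ell]\subseteq[-a_n,a_n]$, so the contracted zeros are uniformly bounded. Combining this with Lemma~\ref{lm:AmBm} and $a_n=O(n^{1/\Lambda})$ yields the rough estimate
\[
|b_{k,\ell}|/b_{k,k}\le e^{O(\log^2 n)}\qquad\text{for all }k<\ell\le n\text{ with }\ell-k\le b\log n.
\]
Crucially, since $Q$ is even, $p_\ell$ has the parity of $\ell$; hence $b_{k,\ell}=0$ unless $k\equiv\ell\pmod 2$, and in particular $c_{n,n}=\xi_n b_{n,n}$ and $c_{n-1,n}=\xi_{n-1}b_{n-1,n-1}$ are single-term sums, which anchors the induction below.

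Processing $k=n,n-1,n-2,\ldots$ in decreasing order, the event $A_k:=\{|c_{k,n}|\le(2e^{1/\alpha}-\eps)^n\}$ rearranges to $|\xi_k|\,b_{k,k}\le(2e^{1/\alpha}-\eps)^n+\sum_{\ell=k+1}^{n}|\xi_\ell|\,|b_{k,\ell}|$; inserting the bounds previously obtained on the $|\xi_\ell|$ via the off-diagonal estimate yields the recursion
\[
M_j\le(1-\delta_\ast)^n+e^{O(\log^2 n)}\bigl(M_0+M_1+\cdots+M_{j-1}\bigr)
\]
for a uniform bound $M_j$ on $|\xi_{n-j}|$ valid on $E_n(\eps)$, with a constant $\delta_\ast=\delta_\ast(\eps)>0$. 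A Gr\"onwall-type estimate solves this to $M_j\le e^{O(\log^3 n)}(1-\delta_\ast)^n\le (1-\delta_\ast/2)^n$ for all $j\le b\log n$ and all large $n$. Independence of the $\xi_k$ and the anti-concentration Lemma~\ref{lm:anti} then give
\[
\P(E_n(\eps))\le\prod_{k\in(n-b\log n,n]}\P\bigl(|\xi_k|\le(1-\delta_\ast/2)^n\bigr)\le(1-a)^{\lfloor b\log n\rfloor}=n^{b\log(1-a)},
\]
which is summable provided $b$ is chosen large enough that $-b\log(1-a)>1$.

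The main obstacle is closing the induction: each off-diagonal correction carries a polynomial-in-$n$ factor, which over the $O(\log n)$ inductive steps could in principle compound to $e^{O(\log^2 n)}$. The argument works only because this sub-exponential blow-up is defeated by the exponential decay $(1-\delta_\ast)^n$ coming from the base cases $c_{n,n}$ and $c_{n-1,n}$, which is precisely why the window in \eqref{11.5} is taken to be logarithmic in $n$.
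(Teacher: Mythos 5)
Your argument is correct, but it takes a genuinely different route from the paper's. The paper (following Lemma 4.2 of \cite{dauvergne2021necessary}) processes $k=n,n-1,\dots$ by conditioning on $\sigma(\xi_{k+1},\dots,\xi_n)$ and applying the anti-concentration bound with \emph{arbitrary} center, \eqref{anti:ax} of Lemma \ref{lm:anti}: since $c_{k,n}=\xi_k b_{k,k}+\sum_{i>k}\xi_i b_{k,i}$, the event $|c_{k,n}|\le(2e^{1/\alpha}-\eps)^n$ forces $\xi_k b_{k,k}$ into an interval of radius $(2e^{1/\alpha}-\eps)^n$ around a center measurable with respect to the later coefficients, so each conditional probability is at most $1-a'$ regardless of what the off-diagonal entries $b_{k,i}$ are; multiplying over the $\Theta(\log n)$ indices and taking $b=2/a'$ gives summability and Borel--Cantelli finishes. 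Your proof replaces this conditioning by a deterministic induction showing that on the bad event all $|\xi_k|$ in the window are exponentially small, and then uses only the centered bound \eqref{anti:a} together with independence. The extra ingredients you need — the uniform lower bound $b_{k,k}^{1/n}\ge 2e^{1/\alpha}-\eps/2$ over the whole window (for which monotonicity $a_n\ge a_k$ plus a uniform version of \eqref{eq:limbnn} suffices, so your appeal to $a_n/a_k\to1$ is more than is needed), the bound $|b_{k,\ell}|/b_{k,k}\le e^{O(\log^2 n)}$ via $\gamma_\ell/\gamma_k\le C_1^{-(\ell-k)}$ from Lemma \ref{lm:AmBm}, $a_n^{\ell-k}\le e^{O(\log^2 n)}$, and the boundedness of the contracted zeros, and the Gr\"onwall step — are all sound, though they are sketched rather than proved, and they are exactly what the paper's conditioning trick makes unnecessary. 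Also, the parity observation ($b_{k,\ell}=0$ unless $k\equiv\ell\pmod 2$) is not actually needed: the single anchor $c_{n,n}=\xi_n b_{n,n}$ already starts the induction, so your argument does not really depend on $Q$ being even at this point. In exchange, your route yields slightly more information (on the complement event every top coefficient $\xi_k$ is exponentially small), while the paper's route is shorter, needs no structural control of the $b_{k,\ell}$ beyond the diagonal asymptotics, and is insensitive to the location of the zeros of $p_\ell$.
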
	
	
\begin{proof} We follow the idea in \cite[Lemma 4.2]{dauvergne2021necessary}.
Note that $b_{k,k}=a_n^k \gamma_k$, where $\gamma_k$ is the leading coefficient of $p_k$. We have
\begin{equation}\label{eq:limbnn}
\lim_{n\to\infty} |b_{n,n}|^{1/n} = \lim_{n\to\infty} a_n \gamma_n^{1/n} = 2\, e^{1/\alpha}.
\end{equation}
For every $\ep>0$, it suffices to show that
\begin{equation}\label{eq:BC1}
\P\left (\left (\max_{n-b\log{n}<k\le n} |c_{k,n}|\right)^{1/n} \le 2 e^{1/\alpha} -\ep \text{ infinitely often}\right )=0.
\end{equation}
Note that $|c_{k,n}|\le (2 e^{1/\alpha} -\ep)^{n}$ implies that $\xi_k b_{k, k}$ lies in an interval of radius $(2 e^{1/\alpha} -\ep)^{n}$ centered around $\sum_{i=k+1}^n \xi_i b_{k,i}$. Thus, by independence, for any event $A_{k, n}\in \sigma(\xi_{k+1}, \dots, \xi_n)$ -- the $\sigma$-algebra generated by $\xi_{k+1}, \dots \xi_n$, we have
\begin{eqnarray}
\P\left (|c_{k,n}|^{1/n} \le 2\, e^{1/\alpha} -\ep\big\vert A_{k, n}\right ) &\le&\sup_{x\in \R} \P\left ( |\xi_k b_{k, k}-x| \le \left (2\, e^{1/\alpha} -\ep\right )^{n}\right )\notag.\end{eqnarray}
By \eqref{eq:limbnn}, this implies that for sufficiently large $n$ and $k\ge n-b\log n$,
\begin{eqnarray}
\P\left (|c_{k,n}|^{1/n} \le 2\, e^{1/\alpha} -\ep\big\vert A_{k, n}\right ) &\le&\sup_{x\in \R} \P\left ( |\xi_k-x| \le \left (1 -\delta\right )^{n}\right ) \notag
\end{eqnarray}
where $\delta = \frac{\ep}{2\, e^{1/\alpha}}$ is a constant.

By \eqref{anti:ax}, the last quantity is smaller than $1 - a'$ for the same constant $a'$ as in \eqref{anti:ax}, again for all sufficiently large $n$. Thus, by setting $$A_{k, n}=\{|c_{\ell,n}|^{1/n}\le   2\, e^{1/\alpha} -\ep, \forall \ell\in (k , n]\cap \N\}\in \sigma(\xi_{k+1}, \dots, \xi_n)$$ and $A_{n, n}$ be the whole sample space, we get
\begin{eqnarray}
 \P\left (\left (\max_{n-b\log{n}<k\le n} |c_{k,n}|\right)^{1/n} \le 2\, e^{1/\alpha} -\ep\right ) 
 &=& \prod_{k=\lfloor n-b\log n\rfloor+1}^{n}\P\left (|c_{k,n}| \le (2\, e^{1/\alpha} -\ep)^{n}\big\vert A_{k, n}\right ) \notag\\
 &\le& (1-a')^{b\log n}\le n^{-a'b}\notag.
\end{eqnarray}
By choosing $b = 2/a'$, the right-most side becomes $n^{-2}$ which is summable over $n$.
By the first Borel-Cantelli Lemma, this implies \eqref{eq:BC1}.
  \end{proof}

\begin{proof}[Proof of Theorem \ref{thm:zeroasympt}]
This proof is similar to the proof of Theorem 2.4 in \cite{LPX2}, but it has several additional steps due to different assumptions on random
coefficients. Recall that the standard Freud weight with index $\alpha$ is given by
\[
w(s)=e^{-\gamma_\alpha |s|^\alpha}, \quad s\in \R,
\]
where
\[
\gamma_\alpha = \frac{\Gamma(\frac{\alpha}{2})\Gamma(\frac{1}{2})}{2\Gamma(\frac{\alpha}{2}+\frac{1}{2})} = \int_0^1 \frac{t^{\alpha-1}}{\sqrt{1-t^2}}\, dt
\]
see \cite[p. 239]{ST}. Note that by \cite[p. 240]{ST}, $F_w=\log 2+1/\alpha$ is the modified Robin constant and $\mu_w=\mu_\alpha$ is the equilibrium measure corresponding to $w$. Following \cite{ST}, we call a sequence of monic polynomials $\{M_n\}_{n=1}^\infty$, with $ \deg(M_n)=n,$ asymptotically extremal with respect to the weight $w$ if it satisfies
\begin{equation}\label{11.11}
\lim_{n\to\infty} \|w^n M_n\|_{L^\infty(\R)}^{1/n} = e^{-F_w} = e^{-1/\alpha}/2.
\end{equation}
Theorem 4.2 of \cite[p. 170]{ST} states that  the normalized zero counting measures of $M_n$ converge weak* to $\mu_w=\mu_\alpha$. On the other hand, by Corollary 2.6 of \cite[p. 157]{ST} and Theorem 5.1 of \cite[p. 240]{ST},
\[
\|w^n M_n\|_{L^\infty(\R)}=\|w^n M_n\|_{L^\infty([-1,1])}.
\]
Applying Theorem 3.6 of \cite[p. 46]{ST}, we see that \eqref{11.11} is equivalent to
\begin{equation}\label{11.12}
\limsup_{n\to\infty} \|w^n M_n\|_{L^\infty([-1, 1])}^{1/n} \leq e^{-F_w} = e^{-1/\alpha}/2.
\end{equation}
We construct the monic polynomials $M_n$ from $P_n^*$ in such a way that they share most of zeros, and therefore have the same limit of their zero counting measures.
Continuing with our previous notation, we write
\[
P_n^*(z)=P_n(a_n z)=\sum_{k=0}^n \xi_k p_k(a_n z)=\sum_{k=0}^n c_{k,n} z^k = c_{d_n,n} \prod_{k=1}^{d_n} (z-z_{k,n}),\ c_{d_n,n}\neq 0,
\]
where we assume that the zeros $\{z_{k,n}\}_{k=1}^{d_n}$ are listed in the order of decreasing absolute values: $|z_{1,n}|\ge |z_{2,n}|\ge\ldots\ge |z_{n,n}|,$ and where
$d_n$ is the actual degree of $P_n^*.$ Note that $n-d_n = o(n)$ with probability one by \eqref{11.5}. Let $m_n$ be the number of zeros of $P_n^*$ with absolute value greater than 2,
and consider $l_n=\min(m_n,b\log{n})$, where $b>0$ is a constant from Lemma  \ref{lem11.3}. We define the monic polynomial $M_n$ of degree $n$ by
\[
M_n(z) := \frac{(z-2)^{n-d_n+l_n}\,P_n^*(z)}{c_{d_n,n}\, \prod_{1\le k\le l_n} (z-z_{k,n})},\quad n\in\N,
\]
and show they are asymptotically extremal in the sense of \eqref{11.11}-\eqref{11.12} with probability one. Thus the zero counting measures of $M_n$ converge weak* to $\mu_\alpha$ as discussed above, and the same conclusion is true for the zero counting measures of $P_n^*$, because the sets of roots for $M_n$ and $P_n^*$ differ only by $o(n)$ elements.

Using Vieta's formulas for $P_n^*$, we obtain that
\[
|c_{n-k,n}| \le |c_{d_n,n}| \left| \sum_{j_1<\ldots<j_k} z_{j_1,n}\ldots z_{j_k,n} \right| \le |c_{d_n,n}| {n\choose k} \prod_{j=1}^k |z_{j,n}| \le n^k |c_{d_n,n}| \prod_{j=1}^k |z_{j,n}|,\ k\le l_n,
\]
where we used decreasing ordering of zeros. Since $l_n=O(\log{n})$, it follows from the last inequality and Lemma \ref{lem11.3} that
\[
\liminf_{n\to\infty} \left( |c_{d_n,n}| \prod_{j=1}^k |z_{j,n}| \right)^{1/n} \ge  \liminf_{n\to\infty} \left(\max_{1\le k\le l_n} |c_{n-k,n}|\right)^{1/n} \ge 2\, e^{1/\alpha} \quad\mbox{ a.s.}
\]
Further, we apply the above inequality and that $n-d_n+l_n=o(n)$ to estimate (with probability one)
\begin{align*}
\limsup_{n\to\infty} \left\|M_n w^n \right\|_{L^\infty([-1, 1])}^{1/n} &\le \limsup_{n\to\infty} \left\|P_n^* w^n \right\|_{L^\infty([-1, 1])}^{1/n}
\left\| \frac{(z-2)^{n-d_n+l_n}}{c_{d_n,n}\, \prod_{1\le k\le l_n} (z-z_{k,n})}\right\|_{L^\infty([-1, 1])}^{1/n} \\
&\le \limsup_{n\to\infty} \left\|P_n^* w^n \right\|_{L^\infty([-1, 1])}^{1/n}
\left( \frac{3^{n-d_n+l_n}}{|c_{d_n,n}|\, \prod_{1\le k\le l_n} (|z_{k,n}|/2)}\right)^{1/n} \\
&\le \frac{1}{2e^{1/\alpha}}\limsup_{n\to\infty} \left\|P_n^* w^n \right\|_{L^\infty([-1, 1])}^{1/n} \\ &=e^{-F_w}\limsup_{n\to\infty} \left\|P_n^* w^n \right\|_{L^\infty([-1, 1])}^{1/n}.
\end{align*}
For the second inequality in the above display, we also used that $|z_{k,n}|\ge 2,\ 1\le k\le l_n,$ by our construction, so that $|z-z_{k,n}|\ge |z_{k,n}|-1\ge |z_{k,n}|/2 $
for $z\in[-1,1],\ 1\le k\le l_n,$ and clearly $|z-2|\le 3$ for $z\in[-1,1].$

Thus the proof would be completed if we show that
\begin{equation}\label{11.13}
\limsup_{n\to\infty} \left\|P_n^* w^n \right\|_{L^\infty([-1, 1])}^{1/n} \le 1\quad \mbox{a.s.}
\end{equation}
Recall that by \eqref{3.0}
\begin{align*}
\left\|W P_n\right\|_{L^{\infty}(\R)} \leq C n \left(\sum_{k=0}^n |\xi_k|^2\right)^{1/2},\quad n\in\N.
\end{align*}
Changing variable $x=a_n s$ and introducing $w_n(s)=W(a_n s)^{1/n},$ we obtain that
\[
\left\|P_n^* w_n^n \right\|_{L^\infty([-1, 1])} = \left\|P_nW \right\|_{L^\infty([-a_n, a_n])} = \left\|P_nW \right\|_{L^\infty(\R)}
\]
by \cite[p. 4]{Lubinskybookexp}. Hence
\[
\limsup_{n\to\infty} \left\|P_n^* w_n^n \right\|_{L^\infty([-1, 1])}^{1/n} \leq \limsup_{n\to\infty} \left(\max_{0\le k\le n} |\xi_k| \right)^{1/n} \le 1 \quad\mbox{ a.s.}
\]
by \eqref{11.2} of Lemma \ref{lem11.1}. It follows that, with probability one, we have
\begin{align*}
\limsup_{n\to\infty} \left\|P_n^* w^n \right\|_{L^\infty([-1, 1])}^{1/n} &\le \limsup_{n\to\infty} \left\|P_n^* w_n^n \right\|_{L^\infty([-1, 1])}^{1/n} \left\|w/w_n\right\|_{L^\infty([-1, 1])} \\ &\le \limsup_{n\to\infty} \left\|w/w_n\right\|_{L^\infty([-1,1])}.
\end{align*}
Since $w_n$ and $w$ are both even, to prove \eqref{11.13} it only remains to show that
\[
\limsup_{n\to\infty} \left\|w/w_n\right\|_{L^\infty([0,1])} \le 1.
\]
But exactly this limit relation was established in the end of proof for Theorem 2.4 in \cite{LPX2}, so that we refer there to avoid duplication.
\end{proof}

\begin{proof}[Proof of Corollary \ref{cor:explimit}]
The proof of this result is identical to the proof of Corollary 2.5 in \cite{LPX2}. Thus we omit it.
\end{proof}

 \section{Proof of Theorem \ref{thm:edge}}\label{sec:proof:edge}

 We recall that $D(\ep)= [-1+\ep, 1-\ep]$. We need to show that the number of real roots outside of $D(\ep)$ is negligible, i.e., 
 \begin{equation*}\label{key}
 \lim_{\ep\to 0} \lim _{n\to \infty}\frac{\E[N_{\ps}(\R \setminus D(\ep))]}{n} = 0.
 \end{equation*}
The corresponding result for $\E[N_{\wps}]$ is included in the above statement. 

For any $\ep>0,$ let $F_\ep:=([-1/\ep,\ep-1]\cup[1-\ep,1/\ep])\times [-\ep, \ep]$. Recall from \eqref{Ullman} that $\mu_\alpha$ is absolutely continuous, and supp$\,\mu_\alpha = [-1,1].$ It follows that $\mu_{\alpha}(\partial F_\ep)=0$, and we can apply Corollary \ref{cor:explimit} to obtain that
\[ 
\lim_{n\rightarrow\infty}\frac{\E N_{\ps}(F_\ep)}{n}=\mu_{\alpha}(F_\ep).
\]
Furthermore, monotone convergence and basic measure properties give that
\begin{align*}
\lim_{\ep\to 0} \lim _{n\to \infty}\frac{\E[N_{\ps}(\R \setminus D(\ep))]}{n} &\le \limsup_{\ep\to 0} \lim _{n\to \infty}\frac{\E[N_{\ps}(F_\ep)]}{n} = \limsup_{\ep\to 0} \mu_{\alpha}(F_\ep) \\
&= \limsup_{\ep\to 0} \mu_{\alpha}([-1,\ep-1]\cup[1-\ep,1]) = 0,
\end{align*}
which completes this proof.

\section{Correlations of real roots}\label{sec:correlation}
For each $1\le k\le n$, let $\rho_k(x_1,\dots, x_k)$ be the $k$-correlation of the real roots of $P_n(x)$ (see for instance \cite{HKPV}), for which 
$$\E \bigg[\sum G\left (\zeta_{i_1}, \dots, \zeta_{i_k}\right)\bigg]  = \int_{\R^k} G(x_1,\dots, x_k) \rho_k(x_1,\dots, x_k) dx_1 \dots dx_k,$$
for any continuous, compactly supported test function $G : \R^k \to \R$, where the sum runs over all $k$-tuples $(\zeta_{i_1}, \dots, \zeta_{i_k})$ of the real roots of $P_n(x)$. 

In principle one can use Kac-Rice formula (see also \cite{HKPV}\footnote{The literature on this formula is so vast that it is impossible to list even a small portion of it.}) to compute these correlation functions, that
\begin{equation}\label{eqn:KR}
\rho_k(x_1,\dots, x_k) = \int_{\R^k} |y_1,\dots, y_k|p(\mathbf{0},\By) dy_1\dots dy_k,
\end{equation}
where $p(.)$ is the joint density function of the random vectors $(P_n(x_1),\dots, P_n(x_k))$ and $(P_n'(x_1),\dots, P_n'(x_k))$. 

This formula is especially useful when $\xi_i$ are iid standard gaussian and when $k$ is small. For instance, directly related to our current setting of orthogonal polynomials, the cases $k=1$ and $k=2$ were computed in \cite{LPX2} and \cite{LP} respectively. However, the formulas become increasingly more complicated when $k$ gets larger and for other ensembles of random coefficients. Our goal in this section is to provide some alternative formulas by following \cite{GKZ}.

For each $\Bx=(x_1,\dots, x_k)\in \R^k$,  denote by $V(\Bx)$ the Vandermonde-type matrix
\[
V(\Bx)=
\begin{pmatrix}
p_0(x_1) & p_1(x_1) & \dots & p_{k-1}(x_1) \\
\vdots & \vdots & \ddots & \vdots \\
p_0(x_k) & p_1(x_k) & \dots & p_{k-1}(x_k) \\
\end{pmatrix},
\]
where we recall that $p_i(x)$ are the orthogonal polynomials of degree $i$ with respect to $\mu$. Assume that $p_i(x) =\gamma_i x^i+\dots$, then from the recurrence formula \eqref{eqn:rec} we have $\gamma_i = (A_0 \dots A_{i-1})^{-1} \gamma_0$ and 
\begin{equation*}\label{eqn:det}
\det(V(\Bx))=\prod_{m=0}^{k-1} \gamma_m \prod_{1\leq i<j\leq k}(x_j-x_i).
\end{equation*}
As the random coefficients $\xi_i$ of $P_n(x)$ have density, with probability one the roots of $P_n(x)$ are distinct, and hence we will assume that the $x_i$ are distinct. Consider the random function $\Bet=(\eta_0,\dots,\eta_{k-1})^T:\R^k\to\R^k$ defined as
\begin{equation}\label{eqn:eta}
  \Bet(\Bx):=
  -V^{-1}(\Bx)
\begin{pmatrix}
\sum_{j=k}^n\xi_j p_j(x_1)\\
\vdots\\
\sum_{j=k}^n\xi_j p_j(x_k)
\end{pmatrix}.
\end{equation}

Our main result of this section is the following formula.

\begin{theorem}\label{thm:k:1} Let $f_0,\dots, f_{k-1}$ be the density functions of $\xi_0,\dots, \xi_{k-1}$ respectively. With $\Bet$ from \eqref{eqn:eta} we have
\begin{align*}
\rho_k(\Bx) &= \prod_{m=0}^{k-1} \gamma_m^{-1}  \prod_{1\leq i<j\leq k}|x_i-x_j|^{-1} \E\left(\prod_{i=1}^{k}\left |\sum_{j=0}^{k-1}\eta_j(\Bx)p_j(x_i)+\sum_{j=k}^{n}\xi_j p_j(x_i)\right |\prod_{i=0}^{k-1}f_i(\eta_i(\Bx))\right), \nonumber
\end{align*}
where the expectation is with respect to ${\xi_k,\dots, \xi_n}$.
\end{theorem}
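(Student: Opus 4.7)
The plan is to combine the Kac-Rice formula \eqref{eqn:KR} with a conditioning-and-change-of-variables argument in the spirit of \cite{GKZ}. The key algebraic observation is that, after conditioning on the tail coefficients $(\xi_k, \ldots, \xi_n)$, the vector $(P_n(x_1), \ldots, P_n(x_k))^T$ is an invertible affine function of the head coefficients $(\xi_0, \ldots, \xi_{k-1})^T$, with linear part exactly $V(\Bx)$. Since $V(\Bx)$ has a Vandermonde-type determinant, this is precisely what allows the joint density of $(P_n(x_1), \dots, P_n(x_k))$ at $\mathbf{0}$ to be written explicitly in terms of $f_i(\eta_i(\Bx))$.

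First, for $\Bx$ with distinct coordinates I would write
\[
(P_n(x_1), \ldots, P_n(x_k))^T \;=\; V(\Bx)\,(\xi_0, \ldots, \xi_{k-1})^T + \Br(\Bx),
\qquad \Br(\Bx)_i \;:=\; \sum_{j=k}^n \xi_j p_j(x_i).
\]
Since $V(\Bx)$ is invertible (by the determinant formula recalled below), conditioned on $(\xi_k, \ldots, \xi_n)$ the density of $(P_n(x_1), \ldots, P_n(x_k))^T$ at $\mathbf{0}$ equals
\[
\frac{1}{|\det V(\Bx)|} \prod_{i=0}^{k-1} f_i(\eta_i(\Bx)),
\]
where $\eta(\Bx) = -V^{-1}(\Bx)\Br(\Bx)$ is exactly \eqref{eqn:eta}: the unique value of $(\xi_0, \ldots, \xi_{k-1})$ forcing $P_n(x_i) = 0$ for every $i$.

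Next I would plug this into \eqref{eqn:KR}: after the $(\xi_0,\dots,\xi_{k-1})$-integration, the $(y_1,\dots,y_k)$-integration in Kac-Rice becomes an integration against the conditional density of $(P_n'(x_1),\dots,P_n'(x_k))$ given $P_n(\Bx)=\mathbf{0}$ and $(\xi_k,\ldots,\xi_n)$. On that event the head coefficients are pinned at $\eta_i(\Bx)$, so each $|y_i|$ becomes the absolute value of the expression appearing inside the product in the theorem statement, and only the expectation over $(\xi_k,\ldots,\xi_n)$ remains. Finally, elementary column operations using $p_m(x) = \gamma_m x^m + (\text{lower order})$ reduce $V(\Bx)$ to the classical Vandermonde matrix, yielding
\[
\det V(\Bx) \;=\; \prod_{m=0}^{k-1} \gamma_m \prod_{1\le i<j\le k}(x_j - x_i),
\]
so that $|\det V(\Bx)|^{-1}$ splits into the two deterministic prefactors in the statement.

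The main technical points requiring care are: (i) justifying the conditional-density manipulation, which rests on the $\xi_i$ for $i<k$ having genuine densities (as assumed) and on measurability of $\eta(\Bx)$; (ii) the change-of-variables Jacobian producing $|\det V(\Bx)|^{-1}$, which is transparent but needs the nondegeneracy of $V(\Bx)$ on a set of full measure; and (iii) a Fubini step to interchange the $\By$-integral in Kac-Rice with the expectation over $(\xi_k,\ldots,\xi_n)$, justified by the integrability of the $f_i$. None of these is conceptually hard once the algebraic setup is in place; the real content is the observation that $V(\Bx)$ simultaneously diagonalizes the Kac-Rice conditioning and carries an explicit Vandermonde-type determinant, which is the clean adaptation of \cite{GKZ} from the monomial basis to the orthogonal polynomial basis $\{p_j\}$.
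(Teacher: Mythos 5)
Your conditioning picture (pin the head coefficients $\xi_0,\dots,\xi_{k-1}$ at $\Bet(\Bx)$, pick up the factor $|\det V(\Bx)|^{-1}$ with the Vandermonde-type evaluation of $\det V(\Bx)$) is exactly the algebra underlying the paper's proof, but the step ``plug this into \eqref{eqn:KR}'' is a genuine gap. The Kac--Rice identity \eqref{eqn:KR} is stated in the paper only as something one could use ``in principle''; it is never proved there, and under the hypotheses of the theorem it need not even make sense: only $\xi_0,\dots,\xi_{k-1}$ are assumed to have densities, and the vector $\bigl(P_n(x_1),\dots,P_n(x_k),P_n'(x_1),\dots,P_n'(x_k)\bigr)\in\R^{2k}$ is an affine function of the $n+1$ coefficients, so for $2k>n+1$ (e.g.\ the case $k=n$ treated right after Theorem \ref{thm:k:2}) the joint density $p(\mathbf{0},\By)$ simply does not exist. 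The same degeneracy bites inside your own argument: conditionally on $(\xi_k,\dots,\xi_n)$ and on $P_n(\Bx)=\mathbf{0}$, the derivative vector is a point mass, not a law with a density, so ``integration against the conditional density of $(P_n'(x_1),\dots,P_n'(x_k))$'' is a delta-function manipulation that is precisely what would have to be justified. In other words, your proof assumes a conditional/degenerate version of Kac--Rice that is as strong as the statement being proved.

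The paper closes this gap by avoiding Kac--Rice altogether: it applies the coarea formula to the map $\Bx\mapsto\Bet(\Bx)$ on $B=B_1\times\dots\times B_k$, so that $\E\bigl[\prod_i N_n(B_i)\bigr]=\E\,\#\{\Bx\in B:\Bet(\Bx)=(\xi_0,\dots,\xi_{k-1})\}$ becomes $\int_B \E\,|\det J_{\Bet}(\Bx)|\prod_i f_i(\eta_i(\Bx))\,d\Bx$, and the Jacobian, computed by differentiating $V(\Bx)\Bet(\Bx)=-\bigl(\sum_{j\ge k}\xi_jp_j(x_i)\bigr)_i$ in $\Bx$, packages both your $|\det V(\Bx)|^{-1}$ and the product of pinned derivative values into a single determinant. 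This route needs only $f_0,\dots,f_{k-1}$ and no nondegeneracy of the $2k$-dimensional law, which is exactly why the section follows \cite{GKZ} rather than \eqref{eqn:KR}. To repair your write-up you would either have to prove such a conditional Kac--Rice formula (essentially reproducing the coarea argument) or switch to the coarea formulation directly. One further point: your $|y_i|$'s are values of $P_n'(x_i)$, so your derivation yields $\sum_{j<k}\eta_j(\Bx)p_j'(x_i)+\sum_{j\ge k}\xi_jp_j'(x_i)$ inside the product, consistent with \eqref{eqn:rho:start} in the proof of Theorem \ref{thm:k:2}; the displayed statement of Theorem \ref{thm:k:1} with $p_j(x_i)$ (which would vanish identically at the pinned values) is evidently a typo, and you should not claim to match it literally.
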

Alternatively, we also have the following analog of \cite[Theorem 2.3]{GKZ}, where $\sigma_i(\Bx), 1\le i\le k$, are the elementary symmetric polynomials 
$\sum_{1\le j_1 < \dots < j_i\le k} x_{j_1} x_{j_2} \dots x_{j_i}$ (with the convention that $\sigma_0(\Bx)=1$) and where for short we write
$$\langle p(x),q(x) \rangle_\mu := \int_\R p(x)q(x) d\mu(x).$$

\begin{theorem}\label{thm:k:2} Let $f_0,\dots, f_{n}$ be the density functions of $\xi_0,\dots, \xi_{n}$ respectively. We have
\begin{align*}
\rho_k(\mathbf{x}) &=  \prod_{m=0}^{n} \gamma_m^{-1}  \prod_{1\le i < j \le k} |x_i - x_j| \times \\
&\times \int\limits_{\mathbb{R}^{n-k+1}} \prod_{l=0}^{n}f_l\bigg(\sum_{i=l}^n  \big(\sum_{j=0}^{n-k}(-1)^{k-i+j}\sigma_{k-i+j}(\Bx)t_j\big) \langle x^i, p_l(x) \rangle_\mu \bigg) \prod_{i=1}^k |\sum_{j=0}^{n-k} t_j x_i^j| \, dt_0\dots dt_{n-k},
\end{align*}
\end{theorem}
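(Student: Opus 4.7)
The plan is to derive Theorem \ref{thm:k:2} from the delta-function form of the Kac-Rice formula
\[
\rho_k(\Bx) = \int_{\R^{n+1}} \prod_{i=1}^k |P_n'(x_i)|\,\prod_{i=1}^k \delta(P_n(x_i))\,\prod_{l=0}^n f_l(\xi_l) \, d\xi_0 \cdots d\xi_n
\]
by performing a single linear change of variables from $(\xi_0,\dots,\xi_n)$ to $(v_1,\dots,v_k,t_0,\dots,t_{n-k})$, where $v_i := P_n(x_i)$ and $(t_0,\dots,t_{n-k})$ are the coefficients of the quotient polynomial $Q(x)$ in the decomposition $P_n(x) = L(x) + \prod_{i=1}^k(x-x_i)Q(x)$, with $L$ the unique polynomial of degree at most $k-1$ satisfying $L(x_i)=v_i$. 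Integrating out the delta functions then reduces the problem to an $(n-k+1)$-fold integral over the $t_j$, evaluated at $v=0$, i.e., at $P_n(x) = \prod_i(x-x_i)\sum_j t_j x^j$.

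To compute the Jacobian I would factor the change of variables as $\xi\mapsto c\mapsto(v,t)$, where $c_j$ denotes the coefficient of $x^j$ in $P_n$. Since $\langle x^i,p_l\rangle_\mu=0$ for $i<l$, the relation $\xi_l = \sum_{i\ge l} c_i\langle x^i,p_l\rangle_\mu$ identifies $\xi\mapsto c$ as upper triangular with diagonal entries $\langle x^l,p_l\rangle_\mu = 1/\gamma_l$, so its Jacobian is $\prod_{l=0}^n\gamma_l^{-1}$. For $c\mapsto(v,t)$ the column $\partial c/\partial v_i$ records the monomial coefficients of the Lagrange polynomial $\ell_i(x)=\prod_{j\ne i}(x-x_j)/(x_i-x_j)$, while $\partial c/\partial t_j$ records those of $x^j\prod_{i=1}^k(x-x_i)$. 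Ordering the rows by monomial degree $0,1,\dots,n$ and the columns as $v_1,\dots,v_k,t_0,\dots,t_{n-k}$, the Jacobian matrix becomes block lower-triangular because $\deg\ell_i = k-1$: the top-left $k\times k$ block $A$ satisfies $VA = I_k$ for the Vandermonde $V_{j,m} = x_j^m$ (from $\ell_i(x_j)=\delta_{ij}$), giving $|\det A| = \prod_{i<j}|x_i-x_j|^{-1}$, while the bottom-right $(n-k+1)\times(n-k+1)$ block is triangular with unit diagonal (since $x^j\prod(x-x_i)$ has leading coefficient $1$ at $x^{k+j}$), giving determinant $1$. Combining factors, $|\det\partial\xi/\partial(v,t)| = \prod_l\gamma_l^{-1}\cdot\prod_{i<j}|x_i-x_j|^{-1}$.

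To finish, at $v=0$ I use $P_n'(x_i)=Q(x_i)\prod_{j\ne i}(x_i-x_j)$, so that $\prod_i|P_n'(x_i)| = \prod_{i<j}(x_i-x_j)^2\prod_i\bigl|\sum_j t_j x_i^j\bigr|$, which combines with the inverse Vandermonde factor from the Jacobian to leave exactly one copy of $\prod_{i<j}|x_i-x_j|$. Expanding $\prod_{i=1}^k(x-x_i) = \sum_{m=0}^k(-1)^{k-m}\sigma_{k-m}(\Bx)x^m$ and multiplying by $Q(x)$ produces $c_i = \sum_{j=0}^{n-k}(-1)^{k-i+j}\sigma_{k-i+j}(\Bx)t_j$; substituting this into $\xi_l = \sum_{i\ge l}c_i\langle x^i,p_l\rangle_\mu$ yields exactly the argument of $f_l$ displayed in the statement, and the prefactor $\prod_{m=0}^n\gamma_m^{-1}$ comes directly from the first Jacobian. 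The main obstacle is the Jacobian for $c\mapsto(v,t)$: one must arrange rows and columns to expose the block-triangular structure and recognize the top-left block as the inverse Vandermonde. After this, the rest is a direct substitution together with the orthogonality identity $\langle x^i,p_l\rangle_\mu = 0$ for $i<l$.
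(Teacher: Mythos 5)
Your proposal is correct, but it takes a different route from the paper. The paper derives Theorem \ref{thm:k:2} from Theorem \ref{thm:k:1} (itself proved via the coarea formula): starting from the formula \eqref{eqn:rho:start}, in which the Vandermonde factor $\prod_{i<j}|x_i-x_j|^{-1}$ and the factor $\prod_{m=0}^{k-1}\gamma_m^{-1}$ are already present and only the variables $a_k,\dots,a_n$ remain to be integrated, the paper substitutes $(a_k,\dots,a_n)\mapsto(b_0,\dots,b_{n-k})$ via the factorization $P(x)=\prod_i(x-x_i)\sum_j b_jx^j$; this substitution is triangular with diagonal $\langle x^{k+l},p_{k+l}\rangle_\mu=\gamma_{k+l}^{-1}$, and the identity $\sum_j a_jp_j'(x_i)=\prod_{j\ne i}(x_j-x_i)\sum_j b_jx_i^j$ converts the two Vandermonde powers into the single factor $\prod_{i<j}|x_i-x_j|$. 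You instead bypass Theorem \ref{thm:k:1} entirely, starting from the delta-function form of Kac--Rice and performing one global linear change of variables $(\xi_0,\dots,\xi_n)\mapsto(v,t)$, with the Jacobian factored through the monomial coefficients: the $\xi\leftrightarrow c$ step is triangular with diagonal $\langle x^l,p_l\rangle_\mu=\gamma_l^{-1}$ (same orthogonality input as the paper), while the $c\leftrightarrow(v,t)$ step exposes a block-triangular matrix whose $k\times k$ block is the inverse Vandermonde coming from Lagrange interpolation and whose complementary block is unit triangular. The bookkeeping checks out (the $v$-block yields $\prod_{i<j}|x_i-x_j|^{-1}$, the deltas set $v=0$, and $P_n'(x_i)=Q(x_i)\prod_{j\ne i}(x_i-x_j)$ restores exactly one Vandermonde factor), and the expansion of $c_i$ and of $\xi_l=\sum_{i\ge l}c_i\langle x^i,p_l\rangle_\mu$ reproduces the stated argument of $f_l$. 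What each approach buys: yours is a self-contained one-shot computation that makes the Lagrange/inverse-Vandermonde structure explicit, but it rests on the formal delta-function identity, whose rigorous justification is precisely what the paper's coarea-formula argument (Theorem \ref{thm:k:1}) supplies; the paper's route is shorter at this stage because the $k$-dimensional part of the Jacobian is already packaged in Theorem \ref{thm:k:1}. Two cosmetic slips in your write-up: the triangular matrix with diagonal $\gamma_l^{-1}$ is the map $c\mapsto\xi$ rather than $\xi\mapsto c$ (harmless, since you use it in the correct direction in the chain rule), and with rows ordered by increasing degree the matrix $\partial c/\partial(v,t)$ is block \emph{upper} triangular; neither affects the determinant or the conclusion.
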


These formulas seem to be useful when $k$ is comparable to $n$. For instance when $k=n$ we obtain the following joint density formula for $n$ real roots of $P_n(x)$. 
\begin{corollary}
\begin{align*}
\rho_n(\mathbf{x}) &=  \prod_{m=0}^{n} \gamma_m^{-1}  \prod_{1\le i < j \le n} |x_i - x_j| \int\limits_{\mathbb{R}} \prod_{l=0}^{n}f_l\bigg(\big(\sum_{i=l}^n (-1)^{n-i}\sigma_{n-i}(\Bx) \langle x^i, p_l(x) \rangle_\mu\big) t \bigg) t^n \, dt.
\end{align*}
\end{corollary}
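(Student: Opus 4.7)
The plan is to derive the corollary as a direct specialization of Theorem \ref{thm:k:2} at $k=n$, tracking how each factor simplifies when the integration dimension drops to one. Since the corollary is stated as a consequence of Theorem \ref{thm:k:2}, no new probabilistic input is needed; the work is purely algebraic bookkeeping.

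First, set $k=n$ in the formula of Theorem \ref{thm:k:2}. The integration domain $\R^{n-k+1}$ collapses to $\R$, and I will rename the single integration variable $t := t_0$. The outer prefactors $\prod_{m=0}^{n}\gamma_m^{-1}$ and $\prod_{1\le i<j\le n}|x_i-x_j|$ are inherited unchanged, so these match the corollary immediately. Next, for the inner sum $\sum_{j=0}^{n-k}(-1)^{k-i+j}\sigma_{k-i+j}(\Bx)t_j$, the range becomes $\{j=0\}$, so this sum reduces to $(-1)^{n-i}\sigma_{n-i}(\Bx)\,t$. Substituting into the argument of $f_l$ yields precisely the expression $\big(\sum_{i=l}^n (-1)^{n-i}\sigma_{n-i}(\Bx)\langle x^i, p_l(x)\rangle_\mu\big)t$ appearing inside the $f_l$ in the corollary.

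Finally, the polynomial product $\prod_{i=1}^k |\sum_{j=0}^{n-k} t_j x_i^j|$ becomes $\prod_{i=1}^n |t| = |t|^n$, which gives the $t^n$ factor in the statement of the corollary (interpreting $t^n$ as $|t|^n$, which is what is produced by the absolute values coming from the Kac--Rice determinantal factor in Theorem \ref{thm:k:2}). Putting the three simplifications together reproduces the claimed one-dimensional integral formula. There is no substantive obstacle: the only point requiring care is the sign and index bookkeeping in the elementary symmetric polynomials, which is settled by noting that $k-i+j = n-i$ whenever $k=n$ and $j=0$, and that $\sigma_0(\Bx)=1$ is consistent with the convention declared above Theorem \ref{thm:k:2}.
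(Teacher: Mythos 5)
Your proposal is correct and matches the paper's (implicit) derivation exactly: the corollary is simply Theorem \ref{thm:k:2} specialized to $k=n$, with the integration variable $t=t_0$, the inner sum collapsing to the $j=0$ term, and the last product becoming $|t|^n$. Your remark that the factor should be read as $|t|^n$ (the paper writes $t^n$) is the right reading of the absolute values inherited from the Kac--Rice factor.
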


In what follows we discuss the proof of the above results. Notice that $P_n(x_1)=\dots = P_n(x_k)=0$ if and only if

\begin{equation*}
\begin{pmatrix}
p_0(x_1) & p_1(x_1) & \dots & p_n(x_1) \\
\vdots & \vdots & \ddots & \vdots \\
p_0(x_k) & p_1(x_k) & \dots & p_n(x_k) \\
\end{pmatrix}
\begin{pmatrix}
\xi_0\\
\vdots\\
\xi_n
\end{pmatrix}
=\mathbf{0},
\end{equation*}
which can be rewritten as
$$
\begin{pmatrix}
p_k(x_1) & p_{k+1}(x_1) & \dots & p_n(x_1) \\
\vdots & \vdots & \ddots & \vdots \\
p_k(x_k) & p_{k+1}(x_k) & \dots & p_n(x_k) \\
\end{pmatrix}
\begin{pmatrix}
\xi_k\\
\vdots\\
\xi_n
\end{pmatrix}
=-V(\Bx)
\begin{pmatrix}
\xi_0\\
\vdots\\
\xi_{k-1}
\end{pmatrix}.
$$
Thus
$$
\Bet(\Bx)=
 \begin{pmatrix}
\xi_0\\
\vdots\\
\xi_{k-1}
\end{pmatrix}.
$$

Denote by $J_{\Bet}(\Bx)$ the Jacobian matrix of $\Bet$ at the point $\Bx$.
\begin{lemma}We have
$$
\det (J_{\Bet}(\Bx)) = (-1)^k \frac{\prod_{i=1}^{k}\left(\sum_{j=0}^{k-1}\eta_j(\Bx)p_j'(x_i) + \sum_{j=k}^{n}\xi_j p_j'(x_i)\right)}{\prod_{m=0}^{k-1} \gamma_m \prod_{1\leq i<j\leq k}(x_j-x_i)}.
$$
\end{lemma}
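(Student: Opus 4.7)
The plan is to use implicit differentiation starting from the defining relation of $\boldsymbol{\eta}$. Rewriting $\boldsymbol{\eta}(\Bx) = -V^{-1}(\Bx)(\sum_{j\geq k}\xi_j p_j(x_i))_{i=1}^k$ as $V(\Bx)\boldsymbol{\eta}(\Bx) = -(\sum_{j\geq k}\xi_j p_j(x_i))_{i=1}^k$, we obtain $k$ scalar identities valid for all $\Bx$ in an open set where $V$ is invertible: for each $i=1,\dots,k$,
\[
F_i(\Bx):=\sum_{m=0}^{k-1}\eta_m(\Bx)p_m(x_i)+\sum_{j=k}^n \xi_j p_j(x_i)=0.
\]

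Next I would differentiate $F_i(\Bx)=0$ with respect to each $x_l$. The key observation is that $x_l$ appears as an evaluation point only in the $l$-th equation, so for $i\neq l$ one gets $\sum_{m=0}^{k-1}(\partial_l \eta_m)\, p_m(x_i)=0$, while for $i=l$ an extra term in $p_m'(x_l)$ and $p_j'(x_l)$ appears:
\[
\sum_{m=0}^{k-1}(\partial_l \eta_m)\, p_m(x_l) = -\Bigl(\sum_{m=0}^{k-1}\eta_m p_m'(x_l)+\sum_{j=k}^n \xi_j p_j'(x_l)\Bigr)=:-Q_l.
\]
Stacking these relations into matrix form yields $V(\Bx)\,J_{\boldsymbol{\eta}}(\Bx) = -\mathrm{diag}(Q_1,\dots,Q_k)$, where the $l$-th column on the left lives in the span of the Vandermonde rows and hits zero in every coordinate except the $l$-th by construction.

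Taking determinants and using the identity $\det V(\Bx) = \prod_{m=0}^{k-1}\gamma_m \prod_{1\leq i<j\leq k}(x_j-x_i)$ already recorded in the excerpt, we conclude
\[
\det J_{\boldsymbol{\eta}}(\Bx) = \frac{(-1)^k\prod_{l=1}^k Q_l}{\det V(\Bx)},
\]
which is exactly the claimed formula after substituting the definition of $Q_l$. The only delicate point is verifying that the off-diagonal entries of the right-hand side of $V(\Bx)J_{\boldsymbol{\eta}}(\Bx)$ really vanish—this is automatic because the differentiation of $p_m(x_i)$ or $p_j(x_i)$ with respect to $x_l$ produces $0$ whenever $i\neq l$. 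There is no analytic difficulty; the whole argument is a careful bookkeeping of the implicit function theorem applied to a system whose Jacobian with respect to $\boldsymbol{\eta}$ is precisely $V(\Bx)$, so invertibility of $V$ (which holds off a measure-zero set of $\Bx$ since the $x_i$ are distinct) guarantees that $\boldsymbol{\eta}$ is smooth and the formula is valid almost everywhere, which is all that is needed for the Kac–Rice calculation in Theorem \ref{thm:k:1}.
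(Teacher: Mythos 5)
Your argument is correct and is essentially the paper's own proof: the paper also differentiates the identity $V(\Bx)\Bet(\Bx)=-\bigl(\sum_{j\ge k}\xi_j p_j(x_i)\bigr)_{i=1}^k$ in each variable $x_l$, arrives at $V(\Bx)J_{\Bet}(\Bx)=-\diag\bigl(\sum_{j=0}^{k-1}\eta_j p_j'(x_i)+\sum_{j=k}^{n}\xi_j p_j'(x_i)\bigr)$, and concludes by taking determinants together with $\det V(\Bx)=\prod_{m=0}^{k-1}\gamma_m\prod_{i<j}(x_j-x_i)$. Your bookkeeping of the diagonal versus off-diagonal terms matches the paper's computation exactly, so there is nothing to add.
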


\begin{proof} Recall that

\[
V(\Bx) \Bet(\Bx)=
  -
\begin{pmatrix}
p_k(x_1) & p_{k+1}(x_1) & \dots & p_n(x_1) \\
\vdots & \vdots & \ddots & \vdots \\
p_k(x_k) & p_{k+1}(x_k) & \dots & p_n(x_k) \\
\end{pmatrix}
\begin{pmatrix}
\xi_k\\
\vdots\\
\xi_n
\end{pmatrix}.
\]
By differentiating,
\begin{eqnarray*}
  V(\Bx)J_{\Bet}(\Bx)&+&\diag\left(\sum_{j=0}^{k-1}\eta_j(\Bx)p_j'(x_1),\dots,\sum_{j=0}^{k-1}\eta_j(\Bx)p_j'(x_k)\right)\nonumber\\
   &=&-\diag\left(\sum_{j=k}^{n}\xi_j p_j'(x_1),\dots,\sum_{j=k}^{n}\xi_j p_j'(x_k)\right).
\end{eqnarray*}
It thus follows that 
$$
\det (J_{\Bet}(\Bx)) =
(-1)^k  \det(V(\Bx))^{-1} \prod_{i=1}^{k}\left(\sum_{j=0}^{k-1}\eta_j(\Bx)p_j'(x_i) + \sum_{j=k}^{n}\xi_j p_j'(x_i)\right)
$$
giving the desired result.
\end{proof}
We next rely on the Coarea formula \cite{Federer}.

\begin{lemma}
Let $B\subset \R^k$ be a region. Let $u:B\to\R^k$ be a Lipschitz function and $h:\R^k\to\R^1$ be an $L^1$-function. Then
\[
\int\limits_{\R^k}\#\{\Bx\in B\,:\, u(\Bx)=\By\}\,h(\By)\,d\By=\int\limits_{B}|\det J_u(\Bx)|\, h(u(\Bx))\,d\Bx,
\]
where $J_u(\Bx)$ is the Jacobian matrix of $u(\Bx)$.
\end{lemma}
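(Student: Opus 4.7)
The statement is the equidimensional case of Federer's area/coarea formula, so the plan is to reduce it to the classical change-of-variables theorem piece by piece rather than redo the full geometric measure theoretic machinery.

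First I would reduce to the case $h\ge 0$ by splitting $h = h^+ - h^-$ and invoking linearity; I would also truncate so that $h$ is bounded and $B$ is bounded, and then pass to the limit by monotone or dominated convergence. By Rademacher's theorem, since $u$ is Lipschitz on $B$, it is differentiable almost everywhere, and in particular the Jacobian $\det J_u(\Bx)$ is well-defined a.e. on $B$. Let $Z = \{\Bx \in B : \det J_u(\Bx) = 0\}$ denote the critical set; by Sard's theorem for Lipschitz maps (a consequence of the area formula itself, or proved directly via a covering argument), $u(Z)$ has Lebesgue measure zero, so $Z$ contributes nothing to either side of the identity.

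Second, on $B \setminus Z$, the inverse function theorem applies in an a.e. sense: one can decompose $B \setminus Z$ into a countable disjoint union of measurable sets $B_i$ such that $u|_{B_i}$ is bi-Lipschitz onto its image $u(B_i)$. On each such $B_i$, the ordinary change of variables formula yields
\[
\int_{u(B_i)} h(\By)\, d\By = \int_{B_i} |\det J_u(\Bx)|\, h(u(\Bx))\, d\Bx.
\]
Summing over $i$ and noting that for $\By$ outside a null set the number $\#\{\Bx \in B : u(\Bx) = \By\}$ equals the number of indices $i$ with $\By \in u(B_i)$, the left-hand side becomes $\int_{\R^k} \#\{\Bx \in B : u(\Bx) = \By\}\, h(\By)\, d\By$, and the right-hand side is exactly $\int_B |\det J_u(\Bx)|\, h(u(\Bx))\, d\Bx$.

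The only genuinely nontrivial input is the bi-Lipschitz decomposition of $B \setminus Z$, which is the content of the standard area formula (see, e.g., Federer \cite{Federer}, Theorem 3.2.3, or Evans--Gariepy); I would simply cite this and apply it, since the lemma is really just a repackaging of the equidimensional area formula. The main obstacle, if one wanted to be self-contained, would be justifying the decomposition of $B \setminus Z$ into bi-Lipschitz pieces, but for our purposes citing \cite{Federer} is sufficient and consistent with how the authors invoke this result in the sequel.
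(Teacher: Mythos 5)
Your proposal is correct: the paper gives no proof of this lemma at all, simply quoting it as the (equidimensional) area/coarea formula with a citation to Federer, and your sketch --- Rademacher, discarding the critical set and the non-differentiability set (both of whose images are null under a Lipschitz map), bi-Lipschitz decomposition of the remainder, and change of variables on each piece --- is the standard textbook argument for exactly that result. Since you, too, ultimately rest on the citation to Federer (or Evans--Gariepy) for the nontrivial decomposition step, your approach is essentially the same as the paper's.
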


\begin{proof} [Proof of Theorem \ref{thm:k:1}] Let $B_1,\dots,B_k$ be a family of mutually disjoint Borel subsets  in $\R$ and let $B=B_1\times\dots\times B_k$. By the Coarea formula and by Fubini theorem we have
\begin{align*}
\E\,\left[\prod_{i=1}^{k}N_n(B_i)\right]&=\E\,\#\Big\{\Bx\in B\,:\, \eta(\Bx)=(\xi_0,\dots,\xi_{k-1})\Big\}\\
&=\E\,\int\limits_{\R^k}\,\#\Big\{\Bx\in B\,:\, \eta(\Bx)=\By\Big\}f_0(y_0)\dots f_{k-1}(y_{k-1})\,d\By \\
& =\int\limits_{B}\E\,|\det (J_{\Bet}(\Bx))|\, f_0(\eta_0(\Bx))\dots f_{k-1}(\eta_{k-1}(\Bx))\,d\Bx,
\end{align*}
where we recall that $f_0,\dots, f_{k-1}$ are the density functions of $\xi_0,\dots, \xi_{k-1}$ respectively.
\end{proof}

\begin{proof} [Proof of Theorem \ref{thm:k:2}] Note that by Theorem \ref{thm:k:1},
\begin{align}\label{eqn:rho:start}
\rho_k(\Bx) &=   \prod_{m=0}^{k-1} \gamma_m^{-1} \prod_{1\leq i<j\leq k}|x_j-x_i|^{-1}\\
&\times\int\limits_{\R^{n-k+1}}\prod_{i=1}^{k}\left|\sum_{j=0}^{n}a_j p_j'(x_i)\right|\prod_{i=0}^{n}f_i(a_i)\, da_k da_{k+1}\dots da_n, \nonumber
\end{align}
where $(a_0, \dots, a_{k-1})^T =   -V^{-1}(\Bx) (\sum_{j=k}^n a_j p_j(x_1),\dots, \sum_{j=k}^n a_j p_j(x_k))^T$.

This means that $x_1,x_2,\dots,x_k$ are zeros of the polynomial $\sum_{i=0}^n a_i p_i(x)$. Hence there exists a unique polynomial $\sum_{j=0}^{n-k} b_j x^j$ such that
\begin{align*}
P(x)=\sum_{i=0}^n a_i p_i(x) =\prod_{i=1}^k(x-x_i)\bigg(\sum_{j=0}^{n-k} b_j x^j \bigg)&= \bigg(\sum_{j=0}^k (-1)^{k-j} \sigma_{k-j}(\Bx) x^j\bigg) \bigg(\sum_{j=0}^{n-k} b_j x^j \bigg)\\ 
&= \sum_{i=0}^n \big(\sum_{j=0}^{n-k}(-1)^{k-i+j}\sigma_{k-i+j}(\Bx)b_j\big)x^i.
\end{align*}
The variables $a_0,\dots,a_n$ are uniquely defined by $\Bx$ and $b_0,\dots,b_{n-k}$ from the above equation, that for any $0\le l\le n$ we have 
$$a_l= \langle P(x), p_l(x) \rangle_\mu =  \sum_{i=l}^n  (\sum_{j=0}^{n-k}(-1)^{k-i+j}\sigma_{k-i+j}(\Bx)b_j) \langle x^i, p_l(x) \rangle_\mu$$
where we note that $\langle x^i, p_l(x) \rangle_\mu =0$ if $i\le l-1$. 
In particularly,
\begin{align*}
a_k& = \sum_{i=k}^n  (\sum_{j=0}^{n-k}(-1)^{k-i+j}\sigma_{k-i+j}(\Bx)b_j) \langle x^i, p_k(x) \rangle_\mu \\
&= (\sum_{j=0}^{n-k}(-1)^{j}\sigma_{j}(\Bx)b_j) \langle x^k, p_k(x) \rangle_\mu +  (\sum_{j=1}^{n-k}(-1)^{j-1}\sigma_{j-1}(\Bx)b_j) \langle x^{k+1}, p_k(x) \rangle_\mu  +\dots\\
& +(\sum_{j=n-k}^{n-k}(-1)^{k-n+j}\sigma_{k-n+j}(\Bx)b_j) \langle x^n, p_k(x) \rangle_\mu\\
&= b_0  \langle x^k, p_k(x) \rangle_\mu + b_1 c_{k,1}+ \dots + b_{n-k} c_{k, n-k},
\end{align*}
for some numbers $c_{k,1},\dots, c_{k,n-k}$ independently of the $b_i$.

More generally, for any $0\le l\le n-k$
\begin{align*}
a_{k+l}& = \sum_{i=k+l}^n  (\sum_{j=0}^{n-k}(-1)^{k-i+j}\sigma_{k-i+j}(\Bx)b_j) \langle x^i, p_{k+l}(x) \rangle \\
&= (\sum_{j=l}^{n-k}(-1)^{j-l}\sigma_{j-l}(\Bx)b_j) \langle x^{k+l}, p_{k+l}(x) \rangle \\
&+  (\sum_{j=l+1}^{n-k}(-1)^{j-l-1}\sigma_{j-l-1}(\Bx)b_j) \langle x^{k+l+1}, p_{k+l}(x) \rangle  +\dots+\\
& +(\sum_{j=n-k}^{n-k}(-1)^{k-n+j}\sigma_{k-n+j}(\Bx)b_j) \langle x^n, p_{k+l}(x) \rangle\\
&= b_l  \langle x^{k+l}, p_{k+l}(x) \rangle + b_{l+1} c_{k+l,1}+ \dots + b_{n-k} c_{k+l, n-k-l}
\end{align*}
for some numbers $c_{k+l,1},\dots, c_{k,n-k-l}$ independently of the $b_i$.

Thus the Jacobian of the substitution of $(a_k,\dots, a_n)$ by $(b_0,\dots, b_{n-k})$ is a lower triangle matrix with diagonal $\diag(\langle x^k, p_k(x) \rangle_\mu, \dots, \langle x^{n}, p_{n}(x) \rangle_\mu) =\diag(\gamma_k^{-1}, \dots, \gamma_n^{-1})$. Hence 
\begin{equation}\label{eqn:ab}
da_k \dots d a_n =\prod_{j=k}^n \gamma_j^{-1}  d b_0 \dots d b_{n-k}.
\end{equation}
Finally, by differentiating $P(x) = \sum_{j=0}^n a_j p_j(x) = \prod_{j=1}^k(x-x_j)\left(\sum_{j=0}^{n-k} b_j x^j \right)$ at the point $x_i$ we get
\begin{equation*}
\sum_{j=0}^{n}a_j p'(x_i)=\prod_{j \in \{1,\dots, k\} \backslash\{i\}} (x_j - x_i) (\sum_{j=0}^{n-k} b_j x_i^j) ,\quad i=1,\dots k.
\end{equation*}
Substitute the above and \eqref{eqn:ab} into \eqref{eqn:rho:start} we obtain the claim.
\end{proof}

\medskip
\noindent
\textbf{Acknowledgements.} \newline
This paper was written within the AIM SQuaREs project ``Random Polynomials." All authors would like to gratefully acknowledge support of the American Institute of Mathematics. Y. Do was supported by DMS-1800855. D. Lubinsky was supported by NSF grant DMS-1800251. H. H. Nguyen was supported by NSF CAREER grant DMS-1752345. O. Nguyen was supported by NSF grant DMS-2125031.
I. Pritsker was partially supported by NSA grant H98230-21-1-0008, NSF grant DMS-2152935, and by the Vaughn Foundation endowed Professorship in Number Theory.

\bibliographystyle{plain}
\bibliography{polyref}
\end{document}